\documentclass[onecolumn]{svjour3}                     %
\smartqed  %
\usepackage{hyperref}
\usepackage[margin=1in]{geometry}
\usepackage[utf8]{inputenc}
\usepackage{amssymb,amsmath,amsfonts,verbatim,mathtools,graphicx}
\usepackage{booktabs}
\usepackage{esint}
\usepackage{xcolor}
\usepackage{float}
\usepackage[section]{placeins}
\usepackage{listings}
\usepackage{mathrsfs}
\usepackage{algorithm}
\usepackage{algorithmic}
\usepackage{float}
\usepackage{upgreek}
\usepackage{dsfont}
\usepackage[shortlabels]{enumitem}

\usepackage{diagbox}
%

\newcommand{\mc}[1]{\mathcal{#1}}
\newcommand{\mb}[1]{\mathbb{#1}}
\newcommand{\ms}[1]{\mathscr{#1}}
\newcommand{\mf}[1]{\mathsf{#1}}
\renewcommand{\vec}[1]{\boldsymbol{#1}}
\newcommand{\mat}[1]{\mathbf{#1}}

\newcommand{\pr}{\text{pr}}
\newcommand{\po}{\text{post}}
\newcommand{\mis}{\text{mis}}
\newcommand{\MAP}{\text{MAP}}

\newcommand{\qua}{\text{quad}}

\newcommand{\iC}{\mc{C}}

\newcommand{\iA}{\mc{A}}
\newcommand{\iL}{\mc{L}}
\newcommand{\iF}{\mc{F}}
\newcommand{\iG}{\mc{G}}
\newcommand{\iS}{\mc{S}}
\newcommand{\iB}{\mc{B}}
\newcommand{\iH}{\mc{H}}


\newcommand{\fF}{\mat{F}}

\newcommand{\fH}{\mat{H}}


\newcommand{\Cpo}{\mc{C}_{\po}}
\newcommand{\Cpr}{\mc{C}_{\pr}}

\newcommand{\Gpo}{\mat{\Gamma}_{\po}}
\newcommand{\Gpr}{\mat{\Gamma}_{\pr}}
\newcommand{\rGpo}{\mat \Gamma_{\po,\text{k}}}


\newcommand{\Q}{\ensuremath{\mathcal{Z}}}
\newcommand{\dQ}{\bar{g}_{\scriptscriptstyle{\Q}}}
\newcommand{\ddQ}{\bar{\mc{H}}_{\scriptscriptstyle{\Q}}}

\newcommand{\q}{Z}
\newcommand{\dq}{\bar{\vec g}_{\text{z}}}
\newcommand{\ddq}{\bar{\mat H}_{\text{z}}}
\newcommand{\ddqt}{\tilde{\mat H}_{\text{z}}}
\newcommand{\bq}{\bar{\vec b}_{\text{z}}}

\newcommand{\fFt}{\tilde{\mat{F}}}
\newcommand{\fDw}{\mat{D}_{\vec{w}}}
\newcommand{\fFtw}{\tilde{\mat{F}}_{\vec{w}}}
\newcommand{\fFtwr}{\tilde{\mat{F}}^{r}_{\vec{w}}}
\newcommand{\fPtw}{\tilde{\mat{P}}_{\vec{w}}}
\newcommand{\Ws}{\mat{W}_{\!\sigma}}

\newcommand{\fmpr}{\vec{m}_\text{pr}}
\newcommand{\fmbar}{\bar{\vec{m}}}

\newcommand{\grad}{\nabla}
\newcommand{\Del}{\Delta}
\newcommand{\half}{\frac{1}{2}}
\newcommand{\tr}{\mathrm{tr}}
\newcommand{\limn}[1]{\lim_{#1\rightarrow\infty}}
\newcommand{\defeq}{\vcentcolon=}
\newcommand{\R}{\mathbb{R}}

\newcommand{\iHmist}{\tilde{\iH}_{\mis}}

\newcommand{\st}{\tilde{\mat P}}
\newcommand{\rst}{\tilde{\mat P}_\text{k}}
\newcommand{\iHmis}{\iH_{\mis}}

\newcommand{\fHmist}{\tilde{\fH}_{\mis}}

\newcommand{\mE}{\mb{E}}
\newcommand{\mV}{\mb{V}}
\newcommand{\mR}{\mb{R}}
\newcommand{\mN}{\mb{N}}

\newcommand{\mpr}{m_{\pr}}

\newcommand{\mMAPy}{m_{\MAP}^{\vec{y}}}
\newcommand{\mbar}{\bar{m}}
\newcommand{\mtrue}{m_{\text{true}}}

\newcommand{\mMAPyw}{m_{\MAP}^{\vec{y}, \vec w}}

\newcommand{\mapA}{\mc{K}}
\newcommand{\mapB}{k}

\newcommand{\sig}{\sigma}
\newcommand{\kap}{\kappa}
\newcommand{\alp}{\alpha}

\newcommand{\zet}{\zeta}
\newcommand{\lam}{\lambda}
\newcommand{\gam}{\gamma}

\newcommand{\paren}[1]{\left( #1 \right)}
\newcommand{\sqbra}[1]{\left[ #1 \right]}
\newcommand{\crbra}[1]{\left\{ #1 \right\}}
\newcommand{\ip}[1]{\left\langle #1 \right\rangle}
\newcommand{\ipM}[1]{\left\langle #1 \right\rangle_{\mat M}}

\newcommand{\PsiRand}{\mat{\Psi}_{\text{rand,p}}}
\newcommand{\PsiSpec}{\mat{\Psi}_{\text{spec,k}}}
\newcommand{\PsiSVD}{\mat{\Psi}_{\text{svd,r}}}


\newcommand{\gl}{G_{\ell}}
\newcommand{\gq}{G_{q}}

\newcommand{\postm}{\mu_{\po}^{\vec{y}}}
\newcommand{\boldheading}[1]{\par\textbf{{#1}.}}

\newcommand{\ind}{\mathds{1}_{\Omega^{*}}}
\renewcommand{\qed}{\hfill$\square$}
\newcommand{\summ}[4]{\sum_{{#1},{#2},{#3},{#4}=1}^n} 
\newcommand{\Ae}[2]{\ip{\iA e_{{#1}}, e_{{#2}}}}

\graphicspath{{figs/}}

\numberwithin{equation}{section}
\begin{document}

\title{
Goal oriented optimal design of infinite-dimensional 
Bayesian inverse problems using quadratic approximations\\
}

\titlerunning{Goal-oriented OED}        %

\def\addressncsu{Department of Mathematics, North Carolina State University,
  Raleigh, NC, USA}
\def\addresssandia{Sandia National Laboratories,
  Albuquerque, NM, USA}

\author{
	J.~Nicholas Neuberger %
	\and
	Alen Alexanderian %
	\and
	Bart van Bloemen Waanders %
}

\institute{J.N.~Neuberger \at
              Department of Mathematics,
              North Carolina State University, \\
              Raleigh, NC. 
           \and
           A.~Alexanderian \at
              Department of Mathematics,
              North Carolina State University, \\
              Raleigh, NC.\\
           \and
		   B.v.B.~Waanders\\
		   Center for Computing Research,
		   Sandia National Labs,\\
  		   Albuquerque, NM.
}

\maketitle

\begin{abstract} 
We consider goal-oriented optimal design of experiments for infinite-dimensional 
Bayesian linear inverse problems governed by partial differential equations (PDEs). 
Specifically, we seek sensor
placements that minimize the posterior variance of a prediction or goal quantity
of interest. The goal quantity is assumed to be a nonlinear functional of the
inversion parameter. We propose a goal-oriented optimal experimental design (OED) 
approach that uses a
quadratic approximation of the goal-functional to define a goal-oriented
design criterion.  The proposed criterion, which we call the
$\gq$-optimality criterion, is obtained by integrating the posterior variance of the quadratic
approximation over the set of likely data.  Under the assumption of Gaussian
prior and noise models, we derive a closed-form expression for this criterion.
To guide development of discretization invariant computational methods, the
derivations are performed in an infinite-dimensional Hilbert space setting.
Subsequently, we propose efficient and accurate computational methods for
computing the $\gq$-optimality criterion.  A greedy approach is used to obtain
$\gq$-optimal sensor placements.  We illustrate the proposed approach for two
model inverse problems governed by PDEs. Our numerical results demonstrate the
effectiveness of the proposed strategy. In particular, the proposed approach 
outperforms non-goal-oriented (A-optimal) and linearization-based (c-optimal)
approaches. 
\end{abstract}

\section{Introduction}
Inverse problems are common in science and engineering applications. In such
problems, we use a model and data to infer uncertain parameters, henceforth 
called inversion parameters, that are not directly observable.  
We consider the
case where measurement data are collected at a set of sensors.  In practice,
often only a few sensors can be deployed. Thus, optimal placement of the sensors
is critical.  Addressing this requires solving an optimal experimental design
(OED) problem~\cite{AtkinsonDonev92,Ucinski05,Pukelsheim06}.

In some applications, the estimation of the inversion parameter is merely an 
intermediate step.  For example, consider a source inversion problem in a heat
transfer application. In such problems, one is often interested in prediction
quantities such as the magnitude of the temperature within a region of interest 
or heat flux through an interface.  A more complex example is a wildfire simulation
problem, where one may seek to estimate the source of the fire, but the emphasis
is on prediction quantities summarizing future states of the system.  In such
problems, design of experiments should take the prediction/goal quantities 
of interest into account.  Failing to do
so might result in sensor placements that do not result in optimal uncertainty
reduction in the prediction/goal quantities.  This points to
the need for a \emph{goal-oriented} OED approach. This is the subject of this
article.  

We focus on Bayesian linear inverse problems
governed by PDEs with infinite-dimensional parameters. 
To make matters concrete, we consider the observation model, 
\begin{equation}\label{eq:observation_model} 
	\vec y = \iF m + \vec \eta.
\end{equation}
Here, $\vec y \in \R^d$ is a vector of measurement data, $\iF$ is a linear
parameter-to-observable map, $m$ is the inversion parameter, and $\vec\eta$
is a random variable that 
models measurement noise.  We consider the case where $m$ belongs to an
infinite-dimensional real separable Hilbert space $\ms{M}$ and $\iF:\ms{M}\to
\R^d$ is a continuous linear transformation.  The inverse problem seeks to
estimate $m$ using the observation model~\eqref{eq:observation_model}. Examples
of such problems include source inversion or initial state estimation in linear
PDEs. See Section~\ref{sec:background}, for a brief summary of the requisite background
regarding infinite-dimensional Bayesian linear inverse problems and OED for such
problems. 

We consider the case where solving the inverse problem is an intermediate 
step and the primary focus is accurate estimation of a scalar-valued prediction 
quantity characterized by a nonlinear goal-functional,
\begin{equation}\label{eq:goal_operator}
	\Q:\ms{M} \to \R.
\end{equation}
In the present work, we propose a 
goal-oriented OED approach that seeks to find sensor placements 
minimizing the posterior uncertainty in such goal-functionals.

\boldheading{Related work}
The literature devoted to OED is extensive. Here, we discuss articles that are closely 
related to the present work.  
OED for infinite-dimensional Bayesian linear inverse problems has been addressed
in several works in the past decade; see 
e.g.,~\cite{AlexanderianPetraStadlerEtAl14,AlexanderianSaibaba18,HermanAlexanderianSaibaba20}.
Goal-oriented approaches for OED in inverse problems governed by differential equations 
have appeared in~\cite{HerzogRiedelUcinski18,Li19,ButlerJakemanWildey20}. 
The article~\cite{HerzogRiedelUcinski18} considers nonlinear problems with nonlinear 
goal operators. In that article, a goal-oriented OED criterion is obtained using 
linearization of the goal operator and 
an approximate (linearization-based) covariance matrix for the inversion parameter.
The thesis~\cite{Li19} considers linear inverse problems with Gaussian prior and noise models, 
where the goal operator itself is a linear transformation of the inversion parameters. 
A major focus of that thesis is the study of methods for the combinatorial optimization problem
corresponding to optimal sensor placement. 
The work~\cite{ButlerJakemanWildey20} considers a stochastic inverse problem 
formulation, known as data-consistent framework~\cite{ButlerJakemanWildey18}. This approach, 
while related, is different from traditional Bayesian inversion. 
Goal-oriented OED for infinite-dimensional linear inverse problems was studied
in~\cite{AttiaAlexanderianSaibaba18,WuChenGhattas23a}. These articles consider
goal-oriented OED for the case of linear parameter-to-goal mappings. 

For the specific class of problems considered in the present work, a traditional
approach is to consider a linearization of the goal-functional $\Q$ around a
nominal parameter $\bar{m}$. Considering the posterior variance of this
linearized functional leads to a specific form of the well-known c-optimality
criterion~\cite{ChalonerVerdinelli95}.  However, a linear approximation does not
always provide sufficient accuracy in characterizing the uncertainty in the
goal-functional.  In such cases, a more accurate approximation to $\Q$ is
desirable. 

\boldheading{Our approach and contributions}
We consider a quadratic approximation of the goal-functional. Thus, $\Q$ is approximated by
\begin{equation}\label{eq:quadratic_approx}
	\Q(m) \approx \Q_\qua(m) \defeq \Q(\bar{m}) + \ip{\nabla \Q(\bar m), m-\bar{m}} 
	   + \frac12 \ip{\nabla^2 \Q(\bar m)(m - \bar m), m - \bar m}.
\end{equation}
Following an A-optimal design approach, we consider the posterior variance of
the quadratic approximation, $\mathbb{V}_{\postm} \{\Q_\qua\}$.  We derive an
analytic expression for this variance in the infinite-dimensional setting, in
Section~\ref{sec:gOED}.  Note, however, that this variance expression depends on
data $\vec y$, which is not available a priori.  To overcome this, we compute
the expectation of this variance expression with respect to data. This results in 
a data-averaged design criterion, which we call
the $\gq$-optimality criterion.  Here, $G$ indicates the goal-oriented nature of
the criterion and $q$ indicates the use of a quadratic approximation.  The
closed-form analytic expression for this criterion is derived in
Theorem~\ref{theorem:main}. 

Subsequently, in Section~\ref{sec:method}, we present three computational
approaches for fast estimation of $\Psi$, relying on Monte Carlo trace
estimators, low-rank spectral decompositions, or a low-rank singular value
decomposition (SVD) of $\iF$, respectively. Focusing on problems where the goal
functional $\Q$ is defined in terms of PDEs, our methods rely on adjoint-based
expressions for the gradient and Hessian of $\Q$. We demonstrate effectiveness
of the proposed goal-oriented approach in a series of computational experiments
in Section~\ref{sec:numerics_quad} and Section~\ref{sec:numerics_nonlinear}. The
example in Section~\ref{sec:numerics_quad} involves inversion of a volume source
term in an elliptic PDE with the goal defined as a quadratic functional of the
state variable.  The example in Section~\ref{sec:numerics_nonlinear} concerns a
porous medium flow problem with a nonlinear goal functional. 

The key contributions of this article are as follows: 
\begin{itemize}
\renewcommand{\labelitemi}{$\bullet$}
\item derivation of a novel goal-oriented
design criterion, the $\gq$-optimality criterion, based on a quadratic approximation of the goal-functional, in
an infinite-dimensional Hilbert space setting (see Section~\ref{sec:gOED}); 
\item efficient computational
methods for estimation of the $\gq$-optimality criterion (see Section~\ref{sec:method});
\item extensive computational experiments, demonstrating the importance of
goal-oriented OED and effectiveness of the proposed approach (see Section~\ref{sec:results}).
\end{itemize}

\section{Background}\label{sec:background}
In this section, we discuss the requisite background concepts 
and notations regarding Bayesian linear inverse problems and OED. 
\subsection{Bayesian linear inverse problems}\label{sec:inverse_problem}
The key components of a
Bayesian inverse problem are the prior distribution, the data-likelihood, and the
posterior distribution. The prior encodes our prior knowledge about the inversion parameter, 
which we denote by $m$. 
The likelihood, which incorporates the parameter-to-observable map, 
describes the conditional distribution of data for a given 
inversion parameter. Finally, the posterior is a distribution law for $m$ that 
is conditioned on the observed data and is consistent with the prior.  These
components are related via the Bayes formula~\cite{Stuart10}. 
Here, we summarize the process for the case of linear Bayesian 
inverse problem.

\boldheading{The data likelihood}
We consider a bounded linear parameter-to-observable map, $\iF:\ms{M} \to \R^d$.
In linear inverse problems governed by PDEs, we define $\iF$ as the composition of 
a linear PDE solution operator $\iS$ and a linear observation operator $\iB$,
which extracts solution values at a prespecified set of measurement points. 
Hence, $\iF = \iB \iS$.
In the present, work, we consider observation models of the form
\begin{equation}\label{eq:inc_mod}
	\vec y = \iF m + \vec \eta, \quad \text{where} \quad \vec \eta \sim \mf{N}(0,\sig^{2}\mat{I}).
\end{equation} 
We assume $m$ and $\vec \eta$
are independent, which implies, $\vec y | m \sim \mf{N}(\iF m,
\sig^{2}\mat{I})$. This defines the data-likelihood.

\boldheading{Prior}
Herein, $\ms{M} = L^2(\Omega)$, where $\Omega$ is a bounded domain in
two- or three-space dimensions. This space is equipped with the $L^2(\Omega)$ inner 
product $\ip{\cdot,\cdot}$ and norm $\| \cdot \| = \ip{\cdot,\cdot}^{1/2}$. 
We consider a Gaussian prior law
$\mu_{\pr} := \mf{N}(\mpr, \Cpr)$.
To define the prior, we follow the approach in~\cite{Stuart10,Bui-ThanhGhattasMartinEtAl13}.
The prior mean is assumed to be a sufficiently regular element of $\ms{M}$ 
and the prior covariance operator $\Cpr$ is defined as the inverse of a differential operator. 
Specifically, 
let $\mc{E}$ be the mapping $s \mapsto m$, defined by the solution operator of 
\begin{equation}\label{eq:elliptic_PDE}
	\begin{alignedat}{1}
		-a_{1}(a_{2}\Del m + m) &= s \quad \text{in } \Omega,\\
		\grad m \cdot \vec n  &= 0, \quad \text{on } \partial\Omega,
	\end{alignedat}
\end{equation}
where $a_{1}$ and $a_{2}$ are positive constants.  Then, the prior
covariance is defined as $\Cpr := \mc{E}^{2}$.  
\boldheading{Posterior}
For a Bayesian linear inverse problem with a Gaussian prior and a Gaussian noise model given by \eqref{eq:inc_mod}, 
it is well-known~\cite{Stuart10} that the posterior 
is the Gaussian measure $\postm \defeq \mf{N}\paren{\mMAPy, \Cpo}$ with 
\begin{equation}\label{eq:post}
\Cpo = \paren{\sig^{-2}\iF^{*}\iF + \Cpr^{-1}}^{-1} \quad \text{and} \quad \mMAPy = \Cpo\paren{\sig^{-2}\iF^{*}\vec y + \Cpr^{-1}\mpr},
\end{equation}
where $\iF^*$ denotes the adjoint of $\iF$.
Here, the posterior mean is the 
maximum a posteriori probability (MAP) point. 
Also, recall the variational 
characterization of this MAP point as the unique global minimizer of 
\begin{equation}\label{eq:J}
J(m) \defeq \frac{1}{2\sig^2} \|\iF m - \vec y\|^{2}_2 + \half \|m - \mpr\|_{\Cpr^{-1}}^{2}
\end{equation}
in the Cameron--Martin space, $\mathrm{range}(\Cpr^{1/2})$; see~\cite{DashtiStuart17}.
The Cameron--Martin space plays a key role in the study of Gaussian measures on Hilbert spaces.
In particular, this space is important in the theory of 
Bayesian inverse problems with Gaussian priors.
Here, 
$\| \cdot \|_{\Cpr^{-1}}^{2}$ is the Cameron--Martin norm, $\| m \|_{\Cpr^{-1}}^{2} = \| \Cpr^{-1/2} m \|^2$.

It can be shown that the Hessian of $J$, 
denoted by $\iH$, satisfies $\iH = \Cpo^{-1}$.
In what follows, the Hessian of data-misfit term in \eqref{eq:J} will be
important.  We denote this Hessian by $\iHmis \defeq \sig^{-2}\iF^{*}\iF$. 
A closely related operator is the prior-preconditioned data-misfit Hessian,
\begin{equation}\label{eq:prior_prec_misfitH}
\iHmist \defeq \Cpr^{1/2}\iHmis\Cpr^{1/2},
\end{equation}
which also plays a key role in the discussions that follow.

Lastly, we remark on the case when the forward operator is affine. 
This will be the case for inverse problems governed by linear 
PDEs with inhomogeneous source volume or boundary source terms.
The model inverse problem considered in
Section~\ref{sec:numerics_nonlinear} is an example of such problems.
In that case, the forward
operator may be represented as the affine map $\iG(m) = \iF m + \vec d$, where
$\iF$ is a bounded linear transformation.  Under the Gaussian assumption on the prior
and noise, the posterior is a Gaussian with the same covariance operator
as in~\eqref{eq:post} and with the mean given by  
$\mMAPy = \Cpo\paren{\sig^{-2}\iF^{*}(\vec y -d) + \Cpr^{-1}\mpr}$.

\boldheading{Discretization}
We discretize the inverse problem using the continuous Galerkin 
finite element method. 
Consider a nodal finite element basis of compactly supported 
functions $\{\phi_i\}_{i=1}^{N}$.
The discretized inversion parameter is represented as 
$m_h = \sum_{i=1}^N m_i \phi_i$. Following common practice, 
we identify $m_h$ with the vector of its finite element coefficients, 
$\vec{m} = [m_1 \; m_2 \; \cdots \; m_N]^\top$. 
The discretized inversion parameter space is thus $\R^N$ equipped 
with the mass-weighted inner product $\ip{\vec u, \vec v}_{\mat M} \defeq \vec u^\top \mat{M} \vec v$.
Here, $\mat{M}$ is the finite element mass matrix, 
$M_{ij} := \int_\Omega \phi_i, \phi_j \, d\vec{x}$, for $i, j \in \{1, \ldots, N\}$.
Note that this mass-weighted inner product is the discretized $L^2(\Omega)$ inner product.
Throughout the article, we use the notation $\mR^{N}_{\mat M}$ for 
$\mR^{N}$ equipped with the mass-weighted inner product $\ip{\cdot, \cdot}_{\mat M}$.

We use boldfaced symbols to represent the discretized versions of the operators
appearing in the Bayesian inverse problem formulation. For details on obtaining
such discretized operators, see~\cite{Bui-ThanhGhattasMartinEtAl13}.  
The discretized solution, observation, and forward operators are
denoted by $\mat S$, $\mat B$, and $\mat F$, respectively. Similarly, the
discretized Hessian is presented as $\mat H$. We denote the discretized prior
and posterior covariance operators by $\Gpr$ and $\Gpo$, respectively. Note 
that $\Gpr$ and $\Gpo$ are selfadjoint 
operators on $\mR^{N}_{\mat M}$. 
\subsection{Classical optimal experimental design}\label{sec:classical_OED}

In the present work, an experimental design corresponds to an array of sensors selected from
a set of candidate sensor locations, $\{x_{i}\}_{i=1}^{n} \subset \Omega$.  
In a \emph{classical} OED problem, 
an experimental design is called optimal if 
it minimizes a notion of posterior uncertainty in the inversion 
parameter. This is different from a goal-oriented approach, where 
we seek designs that minimize the uncertainty in a goal
quantity of interest.

To formulate an OED problem, it is helpful to parameterize sensor placements
in some manner.  A common approach is to assign weights to each sensor in the
candidate sensor grid.  That is, we assign a weight $w_i \geq 0$ to each $x_i$,
$i \in \{1, \ldots, n\}$.  This way, a sensor placement is identified with a
vector $\vec w \in \mR^n$.  Each $w_{i}$ may be restricted to some subset of
$\mR$ depending on the optimization scheme. Here, we assume
$w_{i} \in \{0, 1\}$; a weight of zero means the corresponding sensor is
inactive. 

The vector $\vec w$ of the design weights is incorporated in the Bayesian 
inverse problem formulation through the data-likelihood~\cite{Alexanderian21}.
This yields a $\vec w$-dependent posterior measure. In particular, 
the posterior covariance operator is given by 
\begin{equation}\label{eq:Cpo_weights}
	\Cpo(\vec w) = \paren{\iF^{*}\Ws\iF + \Cpr^{-1}}^{-1} \quad \text{with} \quad 
	\Ws = 			\sigma^{-2}\text{diag}(\vec w).
\end{equation}
There are several classical criteria in the OED literature. One example is  
the \emph{A-optimality} criterion, which is defined as the trace of $\Cpo(\vec w)$.
The corresponding discretized criterion is
\begin{equation}\label{eq:discretized_classical_criterion}
	\mat{\Theta}(\vec w) = \tr \paren{\Gpo(\vec w)}
	\quad \text{with} \quad \Gpo(\vec w) \defeq
	\paren{\fF^{*}\Ws\fF + \Gpr^{-1}}^{-1}.
\end{equation}
The A-optimality criterion quantifies the average posterior variance 
of the inversion parameter field. To define a goal-oriented analogue 
of the A-optimality criterion, we need to consider the posterior variance 
of the goal-functional $\Q$ in~\eqref{eq:goal_operator}. This is discussed 
in the next section. 

\section{Goal-oriented OED}\label{sec:gOED}
In a goal-oriented OED problem,
we seek designs that minimize 
the uncertainty in a goal quantity of interest, 
which is a function of the inversion parameter $m$. 
Here, we 
consider a nonlinear goal-functional 
\begin{equation}\label{eq:goal_fucntional}
	\Q: \ms{M} \to \mR.
\end{equation}
In our target applications, evaluating $\Q$ involves solving PDEs.  Thus,
computing the posterior variance of $\Q$ via sampling can be 
challenging---a potentially large number of samples might be required.  
Also, computing an optimal design requires evaluation of
the design criterion in every step of an optimization algorithm.  Furthermore,
generating samples from the posterior requires forward and adjoint PDE
solves. Thus, design criteria that require sampling $\Q$ at every step of an
optimization algorithm will be computationally inefficient. 
One approach to 
developing a computationally tractable goal-oriented OED 
approach is to replace $\Q$ by a suitable approximation. This 
leads to the definition of approximate measures of uncertainty in $\Q$.

We can use local approximations of $\Q$ to derive goal-oriented criteria.
This requires an
\emph{expansion point}, which we denote as $\mbar \in \ms{M}$. 
A simple choice is
to let $\mbar$ be the prior mean.  
Another approach, which might be feasible
in some applications, is to assume some initial measurement data
is available. This data may be used to compute an initial parameter estimate.
Such an initial estimate might not be suitable for prediction purposes, but can
be used in place of $\mbar$.  The matter of expansion point
selection is discussed later in Section~\ref{sec:results}. For now,
$\mbar$ is considered to be a fixed element in $\ms{M}$. 
In what follows, we assume $\Q$ is twice differentiable and denote
\begin{equation}\label{eq:goal_derivs}
\dQ := \grad \Q(\mbar) \quad \text{and} \quad
\ddQ := \grad^{2}\Q(\mbar).
\end{equation}

A known approach for obtaining an approximate measure of uncertainty in $\Q(m)$
is to consider a linearization of $\Q$ and compute the posterior variance of
this linearization. In the present work, this is referred to as the
$\gl$-optimality criterion, denoted by $\Psi^{\ell}$. The $G$ is used to
indicate goal, and $\ell$ is a reference to linearization. As seen shortly, 
this $\gl$-optimality criterion is a specific instance of the Bayesian
c-optimality criterion~\cite{ChalonerVerdinelli95}. Consider the linear
approximation of $\Q$ given by
\begin{equation}\label{eq:lin}
\Q(m) \approx \Q_{\text{lin}}(m) \defeq \Q(\mbar) + \ip{\dQ, m - \mbar}.
\end{equation}
The $\gl$-optimality criterion is 
\begin{equation}
\label{eq:psi_lin}
\Psi^{\ell}\defeq \mV_{\mu_{\po}}\crbra{\Q_{\text{lin}}}. 
\end{equation}  
It is straightforward to note that 
\begin{equation}\label{eq:coptimal_derivation}
\Psi^{\ell} = \mV_{\mu_{\po}}\crbra{\ip{\dQ, m - \mbar}} = 
\int_\ms{M} \ip{\dQ, m - \mbar}^2 \, \mu_{\po}(dm) = \ip{\Cpo \dQ, \dQ},
\end{equation}
where we have used the definition of covariance operator; see~\eqref{equ:covariance_definition}. 
Letting $c = \dQ$, we obtain the 
c-optimality criterion $\ip{ \Cpo c, c}$. 
The variance of the linearized goal is an intuitive and tractable choice 
for a goal-oriented criterion.  
However, a linearization might severely underestimate the posterior uncertainty in $\Q$
or be overly sensitive to choice of $\mbar$. 

In the present work, 
we define an OED criterion based on the quadratic Taylor expansion of $\Q$. 
This leads to the $\gq$-optimality criterion mentioned in the introduction.
Consider the quadratic approximation,
\begin{equation}\label{eq:quad}
	\Q(m) \approx \Q_{\qua}(m) \defeq \Q(\mbar) + \ip{\dQ, m - \mbar} + \half\ip{\ddQ(m - \mbar), m-\mbar}.
\end{equation}
We can compute  $\mV_{\mu_{\po}}\crbra{\Q_{\qua}}$ analytically.  
This is facilitated by
Theorem~\ref{theorem:variance} below. 
The result is well-known in the finite-dimensional setting.  In the
infinite-dimensional setting, this can be obtained from properties of Gaussian
measures on Hilbert spaces, some developments in~\cite{DaPratoZabczyk02} (cf.\
Remark 1.2.9., in particular), along with the formula for the expected value of
a quadratic form on a Hilbert space~\cite{AlexanderianGhattasEtAl16}. This
approach was used in~\cite{AlexanderianPetraStadlerEtAl17} to derive the
expression for the variance of a second order Taylor expansion, within the 
context of optimization under uncertainty.  However, to our knowledge a direct
and standalone proof of Theorem~\ref{theorem:variance}, which is of independent and
of broader interest, does not seem to be available in the literature. Thus, we present a
detailed proof of this result in the appendix for completeness.
\begin{theorem}[Variance of a quadratic functional]
\label{theorem:variance} 
Let $\iA$ be a bounded selfadjoint linear operator on a
Hilbert space $\ms{M}$ and let $b \in \ms{M}$. 
Consider the quadratic functional $\Q:\ms{M}\to\mR$
given by 
\begin{equation}\label{equ:quadratic_functional_def}
	\Q(m) \defeq \frac{1}{2}\ip{\iA m, m} + \ip{b, m},
	\quad m \in \ms{M}.
\end{equation} 
Let $\mu = \mf{N}(m_{0}, \iC)$ be a Gaussian measure on $\ms{M}$.
Then, we have 
\[
\mV_{\mu}\crbra{\Q} = \|\iA m_{0} + b\|_{\iC}^{2}
+\half\tr\big((\iC\iA)^{2}\big).  
\]
\end{theorem}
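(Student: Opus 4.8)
The plan is to reduce the computation to moments of a centered Gaussian and then exploit the eigenstructure of $\iC$. First I would translate to the centered variable: writing $m = m_0 + x$ with $x \sim \mf{N}(0,\iC)$ and using that $\iA$ is selfadjoint to collapse the two cross terms, the quadratic functional splits as
\begin{equation*}
\Q(m_0 + x) = \Q(m_0) + \ip{\iA m_0 + b,\, x} + \half \ip{\iA x, x}.
\end{equation*}
Since variance is invariant under additive constants, the term $\Q(m_0)$ drops out, and it remains to compute the variance of the sum of the linear form $L(x) = \ip{\iA m_0 + b, x}$ and the pure quadratic form $Q(x) = \half\ip{\iA x, x}$.

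Next I would separate $\mV_{\mu}\crbra{L + Q} = \mV\crbra{L} + 2\,\mathrm{Cov}(L, Q) + \mV\crbra{Q}$ and dispatch the first two pieces. For the linear part, the defining property of the covariance operator gives $\mV\crbra{L} = \ip{\iC(\iA m_0 + b), \iA m_0 + b} = \|\iA m_0 + b\|_\iC^2$, which is exactly the first term in the claim. For the cross term, note that $x$ is centered, so $L$ is an odd and $Q$ an even function of $x$; the product $LQ$ is therefore odd, and since $\mf{N}(0,\iC)$ is symmetric under $x \mapsto -x$ its expectation vanishes, while $\mE\crbra{L} = 0$. Hence $\mathrm{Cov}(L, Q) = 0$, and the linear and quadratic contributions decouple.

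The technical heart is then to show $\mV\crbra{Q} = \half\tr\big((\iC\iA)^2\big)$, equivalently $\mV\crbra{\ip{\iA x, x}} = 2\tr\big((\iC\iA)^2\big)$. Here I would diagonalize $\iC$: being the covariance of a Gaussian measure, $\iC$ is selfadjoint, nonnegative, and trace-class, so it admits an orthonormal eigenbasis $\crbra{e_k}$ with eigenvalues $\lam_k \ge 0$ and $\sum_k \lam_k < \infty$. Writing the Karhunen--Lo\`eve expansion $x = \sum_k \sqrt{\lam_k}\,\xi_k e_k$ with $\xi_k$ i.i.d.\ standard normal, and setting $a_{jk} = \ip{\iA e_k, e_j}$ (a symmetric array by selfadjointness), I obtain
\begin{equation*}
\ip{\iA x, x} = \sum_{j,k} \sqrt{\lam_j \lam_k}\, a_{jk}\, \xi_j \xi_k.
\end{equation*}
Applying the Gaussian fourth-moment (Isserlis/Wick) identity $\mE\crbra{\xi_i\xi_j\xi_k\xi_l} = \delta_{ij}\delta_{kl} + \delta_{ik}\delta_{jl} + \delta_{il}\delta_{jk}$ to the second moment, the $\delta_{ij}\delta_{kl}$ pairing reproduces $(\mE\crbra{\ip{\iA x, x}})^2 = (\tr(\iC\iA))^2$ and cancels in the variance, while the two remaining pairings each contribute $\sum_{j,k}\lam_j\lam_k a_{jk}^2$. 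A short computation using $\iC e_k = \lam_k e_k$ identifies this sum as $\tr\big((\iC\iA)^2\big)$, which produces the factor of $2$ and the claimed formula.

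The main obstacle I anticipate is rigor in the infinite-dimensional setting rather than the algebra: I must justify that $\Q \in L^2(\mu)$ so the variance is finite, that interchanging the expectation with the double sum is legitimate, and that $(\iC\iA)^2$ is trace-class. The last point follows because $\iC$ trace-class makes $\iC^{1/2}$ Hilbert--Schmidt, so $\iC^{1/2}\iA\iC^{1/2}$ is Hilbert--Schmidt with $\tr\big((\iC\iA)^2\big) = \|\iC^{1/2}\iA\iC^{1/2}\|_{\mathrm{HS}}^2 < \infty$; this same bound controls the $L^2(\mu)$ integrability and, via dominated convergence, legitimizes the termwise evaluation of the moments.
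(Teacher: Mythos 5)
Your proposal is correct, and it reaches the result by a cleaner decomposition than the paper's. The paper works directly with the non-centered measure: it writes $\mV_{\mu}\{\Q\} = \mE_{\mu}\{\Q^2\} - (\mE_{\mu}\{\Q\})^2$, which forces it to first extend the centered moment identities to a non-centered Gaussian (its Lemma~\ref{lemma:moments2}, giving somewhat unwieldy formulas for $\int\ip{\iA s,s}\ip{b,s}\,\mu(ds)$ and $\int\ip{\iA s,s}^2\,\mu(ds)$) and then cancel a large number of cross terms by hand. You instead substitute $m = m_0 + x$ with $x \sim \mf{N}(0,\iC)$ up front, so that $\Q$ becomes a constant plus the linear form $L(x)=\ip{\iA m_0 + b, x}$ plus the pure quadratic $\half\ip{\iA x,x}$; the covariance $\mathrm{Cov}(L,Q)$ vanishes by the odd/even symmetry of the centered Gaussian (this is exactly the paper's Lemma~\ref{lemma:moments1}(b) in disguise), and the whole computation reduces to the two centered moments. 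The technical core is then identical in both arguments: the fourth-moment identity $\int\ip{\iA x,x}^2\,\mu_0(dx) = (\tr(\iC\iA))^2 + 2\tr((\iC\iA)^2)$, obtained via the eigenbasis of $\iC$, Isserlis/Wick pairings, and a dominated-convergence justification of the termwise limit with the bound $|\theta_n(x)|\le\|\iA\|^2\|x\|^4$ and finiteness of $\int\|x\|^4\,\mu_0(dx)$. What your route buys is the elimination of the non-centered lemma and the attendant bookkeeping; what the paper's route buys is a set of standalone non-centered moment identities (Lemma~\ref{lemma:moments2}) that it reuses verbatim in the proof of Theorem~\ref{theorem:main}. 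Your closing remarks on rigor (integrability of $\Q$ in $L^2(\mu)$, trace-class of $(\iC\iA)^2$ via $\iC^{1/2}\iA\iC^{1/2}$ being Hilbert--Schmidt) address exactly the points the paper handles with finite-dimensional projections, so nothing essential is missing.
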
 
\begin{proof}
See Appendix~\ref{appdx:variance_of_quad}.
\qed
\end{proof}
We next consider the posterior variance 
$\mV_{\postm}\crbra{\Q_{\qua}}$ of $\Q_{\qua}$.  
Using Theorem~\ref{theorem:variance}, we obtain
\begin{equation}\label{eq:variance_MAP}
	\mV_{\postm}\crbra{\Q_{\qua}} = 
	\left\|\ddQ\mMAPy + b\right\|_{\Cpo}^{2} + \half\tr\big((\Cpo\ddQ)^{2}\big), \quad \text{where} \, b = \dQ - \ddQ\mbar.	
\end{equation}
Note that this variance expression depends on 
data $\vec{y}$, which is  
not available when solving the OED problem. Indeed, the main 
point of solving an OED problem is to determine how data should be collected. 
Hence, we consider the ``data-averaged'' criterion,
\begin{equation}\label{eq:psi}
	\Psi := \mE_{\mu_{\pr}}\Big\{ \mE_{\vec y | m} \big\{ \mV_{\postm}\{\Q_{\qua}\}\big\}\Big\}.
\end{equation}
Here, $\mE_{\mu_{\pr}}$ and
$\mE_{\vec y | m}$ represent expectations with respect to the prior and
likelihood, respectively. This uses the information available in the Bayesian inverse problem formulation 
to compute the expected value of $\mV_{\mu_{\po}}\crbra{\Q_{\qua}(m)}$ over the set of likely data.
In the general case of nonlinear inverse problems, such averaged criteria 
are computed via sample averaging~\cite{AlexanderianPetraStadlerEtAl16,Alexanderian21}. However,  
in the present setting, exploiting the linearity of the parameter-to-observable map
and the Gaussian assumption on prior and noise models, we can compute 
$\Psi$ analytically. This is the main result of this section and presented in the following theorem.

\begin{theorem}[Goal-oriented criterion]
\label{theorem:main}
Let $\Psi$ be as defined in \eqref{eq:psi}. Then, 
\begin{equation}\label{eq:psi_full}
	\Psi = \|\ddQ(\mpr-\mbar) + \dQ\|_{\Cpo}^{2}
	+ \tr\paren{\Cpr\ddQ\Cpo\ddQ} -\half\tr\big((\Cpo\ddQ)^{2}\big).
\end{equation}
\end{theorem}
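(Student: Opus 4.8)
The plan is to evaluate the nested expectation in \eqref{eq:psi} by working from the inside out, starting from the closed form \eqref{eq:variance_MAP} for $\mV_{\postm}\crbra{\Q_\qua}$. Throughout I abbreviate $A = \ddQ$ and $g = \dQ$, and I repeatedly use the identity $\iHmis = \Cpo^{-1} - \Cpr^{-1}$, which is immediate from the definition of $\Cpo$ in \eqref{eq:post}. The first observation is that the trace term $\half\tr\big((\Cpo A)^2\big)$ in \eqref{eq:variance_MAP} is deterministic---it depends on neither $\vec y$ nor $m$---so it survives both expectations untouched. All the work therefore lies in computing $\mE_{\mu_{\pr}}\mE_{\vec y | m}\big[\|A\mMAPy + b\|_{\Cpo}^2\big]$ with $b = g - A\mbar$.

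For the inner expectation, I would substitute $\mMAPy = \Cpo(\sig^{-2}\iF^{*}\vec y + \Cpr^{-1}\mpr)$ from \eqref{eq:post}, which exhibits $A\mMAPy + b$ as an affine function of $\vec y$. Since $\vec y \mid m \sim \mf{N}(\iF m, \sig^{2}\mat I)$, the standard bias--variance split for an affine image of a Gaussian gives
\[
\mE_{\vec y | m}\big[\|A\mMAPy + b\|_{\Cpo}^2\big] = \|\bar u(m)\|_{\Cpo}^2 + T_2,
\]
where $\bar u(m)$ is the integrand evaluated at $\vec y = \iF m$ and the noise contribution is $T_2 = \sig^{-4}\,\mE_{\vec\eta}\big[\|A\Cpo\iF^{*}\vec\eta\|_{\Cpo}^2\big]$. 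Evaluating the latter with $\mE[\vec\eta\vec\eta^{\!\top}] = \sig^{2}\mat I$ and collapsing $\iHmis = \sig^{-2}\iF^{*}\iF$ yields $T_2 = \tr\big(\Cpo A\Cpo A\Cpo\iHmis\big)$.

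For the outer expectation I would first simplify $\bar u(m)$. Using $\Cpo\iHmis = I - \Cpo\Cpr^{-1}$, the term $\sig^{-2}A\Cpo\iF^{*}\iF m = A\Cpo\iHmis m$ telescopes so that $\bar u(m) = \big(A(\mpr-\mbar)+g\big) + A\Cpo\iHmis(m-\mpr)$. Writing $r = m - \mpr \sim \mf{N}(0,\Cpr)$, the constant part is exactly the leading term $\|A(\mpr-\mbar)+g\|_{\Cpo}^2$ of \eqref{eq:psi_full}, the cross term vanishes since $\mE[r]=0$, and the centered part contributes $T_3 = \tr\big(\iHmis\Cpo A\Cpo A\Cpo\iHmis\Cpr\big)$.

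What remains---and this is the decisive step---is the purely algebraic identity $T_2 + T_3 = \tr(\Cpr A\Cpo A) - \tr\big((\Cpo A)^2\big)$, after which adding back the leftover $\half\tr\big((\Cpo A)^2\big)$ produces the claimed $-\half\tr\big((\Cpo A)^2\big)$. I would establish this by substituting $\iHmis = \Cpo^{-1}-\Cpr^{-1}$ into both $T_2$ and $T_3$, expanding, and repeatedly applying the cyclic property of the trace together with selfadjointness of $A$, $\Cpo$, and $\Cpr$; the key event is that the awkward cross term $\tr(\Cpr^{-1}\Cpo A\Cpo A\Cpo)$ appears in $T_2$ and $T_3$ with opposite signs and cancels. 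The main obstacle is bookkeeping rather than conceptual: one must track adjoints carefully when moving operators between $\R^d$ and $\ms M$, and justify the trace manipulations in infinite dimensions. The latter is unproblematic because $\iHmis = \sig^{-2}\iF^{*}\iF$ is finite rank while $\Cpr$ and $\Cpo$ are trace class, so every operator under a trace is trace class and both the cyclic property and the Gaussian identity $\mE\big[\ip{\mat M r, r}\big] = \tr(\mat M\Cpr)$ apply.
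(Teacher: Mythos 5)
Your proposal is correct and follows essentially the same route as the paper's proof: substitute the affine representation of the MAP point, evaluate the nested Gaussian expectations over noise and then prior, and reduce the resulting trace terms via $\Cpo^{-1} = \iHmis + \Cpr^{-1}$ and cyclicity. The only difference is organizational---your early telescoping of $\bar u(m)$ into $\big(\ddQ(\mpr-\mbar)+\dQ\big) + \ddQ\Cpo\iHmis(m-\mpr)$ collapses in one step what the paper handles by expanding into the product sub-terms $A_1,\dots,A_3^3$ and recombining, and your $T_2$ and $T_3$ are exactly the paper's trace terms $B_2$ and $B_1$.
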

\begin{proof}
See Appendix~\ref{sec:proof_main_thm}.
\qed
\end{proof}
We call $\Psi$ in~\eqref{eq:psi_full} the 
\emph{$\gq$-optimality} criterion.
Proving Theorem~\ref{theorem:main} involves three main steps.  In the first
step, the variance of the quadratic approximation of $\Q$ is calculated using
Theorem~\ref{theorem:variance}.  This results in \eqref{eq:variance_MAP}.  In
the second step, we need to compute the nested expectations in~\eqref{eq:psi}.
Calculating these moments requires obtaining the expectations of linear and
quadratic forms with respect to the data-likelihood and prior laws.  The
derivations rely on facts about measures on Hilbert spaces.  Subsequently, using
properties of traces of Hilbert space operators, the definitions of the
constructs in the inverse problem formulation, and some detailed manipulations,
we derive~\eqref{eq:psi_full}.  See Appendix~\ref{sec:proof_main_thm}, for details.

\section{Computational Methods}
\label{sec:method}
Computing the $\gq$-optimality criterion~\eqref{eq:psi_full} requires computing traces of
high-dimensional and expensive to apply operators, which is a computational
challenge. To establish a flexible computational framework, in this section, we
present three different algorithms for fast estimation of the $\gq$-optimality
criterion. 
In Section~\ref{sec:goed_mc}, we
present an approach based on randomized trace estimators. Then, we present an
algorithm that uses the low-rank spectral decomposition of the
prior-preconditioned data misfit Hessian in Section~\ref{sec:goed_spectral}.
Finally, in Section~\ref{sec:goed_lowrankF}, we present an approach that uses
the low-rank SVD of the prior-preconditioned forward operator.  In each 
case, we rely on structure exploiting methods to obtain scalable
algorithms. 

Before presenting these methods, we briefly discuss the discretization of the
$\gq$-optimality criterion. In addition to the discretized operators presented
in Section~\ref{sec:inverse_problem}, we require access to the discretized goal
functional, denoted as $\q$, and its derivatives. In what follows, we denote 
\begin{equation}\label{eq:goal_derivs_finite}
	\dq \defeq \grad \q(\vec \mbar), \quad \ddq \defeq \grad^{2}\q(\vec \mbar), \quad \text{and} \quad \bq \defeq \ddq(\fmpr-\fmbar) + \dq.
\end{equation} 
The discretized $\gq$-optimality criterion is given by 
\begin{equation}\label{eq:goed_discretized}
    \mat\Psi(\vec w) = \ipM{\Gpo(\vec w) \bq, \bq}
	+ \tr\!\paren{\Gpr\ddq\Gpo(\vec w)\ddq} - \half\tr\big((\Gpo(\vec w)\ddq)^{2}\big). 
\end{equation}
Similarly, discretizing the $\gl$-optimality criterion $\Psi^{\ell}$, presented in \eqref{eq:psi_lin}, yields
\begin{equation}\label{eq:psi_lin_disc}
\mat \Psi^{\ell}(\vec w) = \ipM{\Gpo(\vec w) \dq, \dq}.
\end{equation}

\subsection{A randomized algorithm}
\label{sec:goed_mc}
In large-scale inverse problems, it is expensive to build the
forward operator, the prior and posterior covariance operators, or the
Hessian of the goal-functional, $\ddq$ in \eqref{eq:goal_derivs_finite}. 
Therefore, matrix-free methods that only require applications of these 
operators on vectors are essential. 
A key challenge here is computation of the traces in 
\eqref{eq:goed_discretized}.
In this section, we present an approach for computing $\mat\Psi$ that relies on 
randomized trace estimation~\cite{Avron2011}. 
As noted in~\cite{AlexanderianPetraStadlerEtAl14}, 
the trace of a linear operator $\mat T$ on 
$\mR^{N}_{\mat M}$ can be approximated via
\begin{equation}\label{eq:rand_trace}
	\tr\paren{\mat T} \approx \frac{1}{p}\sum_{j=1}^{p} \ipM{\mat T \vec \xi_{j}, \vec \xi_{j}}, \quad \text{with} \ \vec \xi_{j} \sim \mf{N}(\vec{0}, \mat M^{-1}).
\end{equation}
This is known as a Monte Carlo trace estimator. 
The number $p$ of the required trace estimator vectors is problem-dependent. 
However, 
in practice, often a modest $p$ (in order 
of tens) is sufficient for the purpose of optimization. 

We use Monte Carlo estimators to approximate the trace terms in \eqref{eq:goed_discretized}. 
In particular, we use 
\begin{multline*}
\tr\paren{\Gpr\ddq\Gpo\ddq} - \half\tr\big((\Gpo\ddq)^{2}\big)
= 
\tr\big( (\Gpr - \half\Gpo) \ddq\Gpo\ddq\big) \\ 
\approx 
\frac{1}{p}\sum_{j=1}^{p}\ipM{ (\Gpr - \half\Gpo) \ddq\Gpo\ddq \vec\xi_i,\vec\xi_i} 
= 
\frac{1}{p}\sum_{j=1}^{p}\ipM{(\Gpr - \half\Gpo)\vec \xi_{i},\ddq\Gpo\ddq\vec \xi_{i}},
\end{multline*} 
where we have exploited the fact that $\Gpr$ and $\Gpo$ are selfadjoint with 
respect to the mass-weighted inner product $\ipM{\cdot,\cdot}$. 
Thus, letting 
\[
T_p \defeq \frac{1}{p}\sum_{j=1}^{p}\ipM{(\Gpr - \half\Gpo)\vec \xi_{i},\ddq\Gpo\ddq\vec \xi_{i}}, 
\]
we can estimate $\mat\Psi$ by
\begin{equation}\label{eq:PsiRand}
\mat\Psi \approx \PsiRand \defeq \ipM{\Gpo \bq, \bq} + T_p.
\end{equation}
This enables a computationally tractable approach for approximating
$\mat\Psi$.  We outline the procedure for computing $\mat\PsiRand$ in
Algorithm~\ref{alg:randomized}. The computational cost of this approach is
discussed in Section~\ref{sec:computational_cost}. 

The utility of methods based on Monte Carlo trace estimators in the context of OED 
for large-scale inverse problems has been demonstrated in previous studies such 
as~\cite{HaberHoreshTenorio08,HaberMagnantLuceroEtAl12,AlexanderianPetraStadlerEtAl14}.
A key advantage of the present approach is its simplicity.  However, further
accuracy and efficiency can be attained by exploiting
the low-rank structures embedded in the inverse problem.
This is discussed in the next section.

\begin{algorithm}[ht]
	\caption{Algorithm for estimating $\mat \PsiRand$.}
	\begin{algorithmic}[1]
	\STATE \textbf{Input:} $\{\vec \xi_{i}\}_{j=1}^{p}$, $\Gpo$
	\STATE \textbf{Output:} $\mat \PsiRand$
	\STATE Compute $\bq = \ddq(\vec \mpr-\vec \mbar) + \dq$
	\STATE Compute $\vec s = \Gpo \bq$
	\STATE Set $T = 0$
	\FOR{$j=1$ to $p$}
		\STATE Compute $\vec t_{1} = (\ddq \Gpo \ddq)\vec \xi_{j}$
		\STATE Compute $\vec t_{2} = (\Gpr - \half \Gpo)\vec \xi_{j}$
		\STATE Set $T = T + \ipM{\vec t_{1}, \vec t_{2}}$
	\ENDFOR
	\STATE Compute
	$\mat \PsiRand = \ipM{\vec s, \bq} + T/p$
	\end{algorithmic}
	\label{alg:randomized}
\end{algorithm}

\subsection{Algorithm based on low-rank spectral decomposition of $\fHmist$}
\label{sec:goed_spectral}

Here we present a structure-aware algorithm
for estimating the $\gq$-optimality criterion that exploits low-rank components 
within the inverse problem. Namely, we leverage
the often present low-rank structure in the (discretized)
prior-preconditioned data
misfit Hessian, $\fHmist \defeq \sig^{-2}\Gpr^{1/2}\fF^{*}\fF\Gpr^{1/2}$.

Let us denote
\[
	\ddqt   \defeq \Gpr^{1/2} \ddq \Gpr^{1/2} \quad\text{and}\quad
	\st     \defeq (\fHmist + \mat I)^{-1}.
\]
Note that the posterior covariance operator can be represented as 
\begin{equation}\label{eq:postrior_with_P}
\Gpo = \Gpr^{1/2} \st \Gpr^{1/2}.
\end{equation}

As shown in~\cite{Bui-ThanhGhattasMartinEtAl13}, we can obtain a computationally
tractable approximation of $\Gpo$ using a low-rank representation of $\fHmist$.
Let $\{(\lambda_i, \vec v_i)\}_{i=1}^k$ be the dominant eigenpairs of $\fHmist$. We use
\[
\fHmist \approx \mat V_{k}\mat\Lambda_{r}\mat V_{k}^{*} = \sum_{i=1}^{k}\lam_{i} \vec v_{i} \otimes \vec v_{i},
\]
where $\mat V_{k} = [\vec{v}_1 \; \vec{v}_2 \; \cdots \vec{v}_k]$ 
and $\mat \Lambda_{k} = \text{diag}(\lambda_1, \ldots, \lambda_k)$. 
Now, define $\gam_{i} \defeq
\lam_{i}/(\lam_{i}+1)$ and $\mat D_{k} =\text{diag}(\gam_{1}, \gam_{2}, \cdots,
\gam_{k})$.  We can approximate $\st$ using the Sherman-Woodbury-Morrison
formula, 
\begin{equation}\label{eq:St_lowrank}
\st \approx \rst \defeq \mat I - \mat V_{k}\mat D_{k}\mat V_{k}^{*} = \mat I - \sum_{i=1}^{k}\gam_{i}\vec v_{i} \otimes \vec v_{i}.
\end{equation}
Substituting $\st$ by $\rst$ in~\eqref{eq:postrior_with_P}, yields the approximation 
\begin{equation}\label{eq:Cpo_r}
\Gpo \approx \rGpo \defeq \Gpr^{1/2}\rst\Gpr^{1/2} = \Gpr - \Gpr^{1/2}\mat V_{k}\mat D_{k}\mat V_{k}^{*}\Gpr^{1/2}.
\end{equation}
Subsequently, the $\gq$-optimality criterion \eqref{eq:goed_discretized} 
is approximated by  
\begin{equation}\label{eq:rPsi}
	\mat\Psi_{k} \defeq
	\ipM{\Gpo \bq, \bq} + \tr\paren{\Gpr\ddq\rGpo\ddq} -\half\tr\big((\rGpo\ddq)^{2}\big). 
\end{equation}
The following result provides a convenient expression for computing 
$\mat\Psi_{k}$.
\begin{proposition}\label{prp:rPsi}
Let $\mat\Psi_{k}$ be as in~\eqref{eq:rPsi}. Then, 
\begin{equation}\label{eq:rPsi_expression}
\mat\Psi_{k} = \ipM{\rGpo \bq, \bq} + \frac12  \tr(\ddqt^2)
   - \frac12  \sum_{i,j=1}^k \gamma_i \gamma_j \ipM{\ddqt \vec v_{i}, \vec v_j}^2.
\end{equation}
\end{proposition}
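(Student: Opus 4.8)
The plan is to reduce both trace terms in~\eqref{eq:rPsi} to expressions involving only the prior-preconditioned goal Hessian $\ddqt = \Gpr^{1/2}\ddq\Gpr^{1/2}$ and the low-rank surrogate $\rst = \mat I - \sum_{i=1}^k \gam_i\,\vec v_i\otimes\vec v_i$, and then to expand in the spectral basis $\{\vec v_i\}$. Throughout, the traces are operator traces on $\mR^N_{\mat M}$, so the cyclic identity $\tr(\mat A\mat B) = \tr(\mat B\mat A)$ holds for the relevant selfadjoint, trace-class operators, and I would use it freely. I would also use the factorization $\rGpo = \Gpr^{1/2}\rst\Gpr^{1/2}$ from~\eqref{eq:Cpo_r} together with the selfadjointness of $\Gpr^{1/2}$, $\ddq$, and $\rst$ on $\mR^N_{\mat M}$. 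The first inner-product term, in which $\Gpo$ is understood as its low-rank surrogate $\rGpo$ per~\eqref{eq:Cpo_r}, is carried over directly, so all the work is in the two trace terms.

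First, I would rewrite each trace term by inserting the factorizations and conjugating by $\Gpr^{1/2}$. Writing $\Gpr = \Gpr^{1/2}\Gpr^{1/2}$ and $\rGpo = \Gpr^{1/2}\rst\Gpr^{1/2}$ and cyclically moving one factor of $\Gpr^{1/2}$ around, the first trace becomes $\tr(\Gpr\ddq\rGpo\ddq) = \tr(\ddqt\,\rst\,\ddqt) = \tr(\rst\,\ddqt^2)$. The same conjugation gives $\tr((\rGpo\ddq)^2) = \tr((\rst\ddqt)^2)$. This recasts the trace portion of $\mat\Psi_{k}$ as $\tr(\rst\ddqt^2) - \half\tr((\rst\ddqt)^2)$, entirely in terms of $\rst$ and $\ddqt$.

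Next, I would substitute $\rst = \mat I - P$, with $P \defeq \sum_{i=1}^k \gam_i\,\vec v_i\otimes\vec v_i$, and expand. Using the cyclic identity to identify $\tr(\ddqt P\ddqt) = \tr(P\ddqt^2)$, one finds $\tr((\rst\ddqt)^2) = \tr(\ddqt^2) - 2\tr(P\ddqt^2) + \tr(P\ddqt P\ddqt)$, while $\tr(\rst\ddqt^2) = \tr(\ddqt^2) - \tr(P\ddqt^2)$. The key feature is that, upon forming $\tr(\rst\ddqt^2) - \half\tr((\rst\ddqt)^2)$, the terms linear in $P$ cancel exactly, leaving $\half\tr(\ddqt^2) - \half\tr(P\ddqt P\ddqt)$. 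This cancellation is the crux of the computation and is the step I would check most carefully.

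Finally, I would evaluate $\tr(P\ddqt P\ddqt)$ using the rank-one structure of $P$. Since $\{\vec v_i\}$ are orthonormal in $\ipM{\cdot,\cdot}$, the rank-one operator $\vec v_i\otimes\vec v_j$ acts by $\vec u \mapsto \ipM{\vec v_j,\vec u}\vec v_i$ and satisfies $\tr\!\big((\vec v_i\otimes\vec v_j)\mat A\big) = \ipM{\mat A\vec v_j,\vec v_i}$ for selfadjoint $\mat A$. Expanding $P\ddqt P = \sum_{i,j}\gam_i\gam_j\ipM{\ddqt\vec v_j,\vec v_i}\,\vec v_i\otimes\vec v_j$ and applying this identity with $\mat A = \ddqt$ gives $\tr(P\ddqt P\ddqt) = \sum_{i,j=1}^k \gam_i\gam_j\ipM{\ddqt\vec v_i,\vec v_j}^2$, where the symmetry of $\ipM{\ddqt\,\cdot,\cdot}$ lets me write the summand as the square appearing in~\eqref{eq:rPsi_expression}. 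Combining this with the retained first term yields the claimed formula. The only real obstacle is bookkeeping: keeping track of the mass-weighted adjoints and the outer-product conventions so that the cyclic and rank-one trace identities are applied correctly; once that is in place, the identity follows from the linear-term cancellation noted above.
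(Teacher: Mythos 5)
Your proposal is correct and follows essentially the same route as the paper's proof: conjugate the traces by $\Gpr^{1/2}$ to reduce everything to $\rst$ and $\ddqt$, expand $\rst = \mat I - \sum_i \gamma_i \vec v_i \otimes \vec v_i$, and evaluate the resulting rank-one traces, with the linear-in-$P$ terms cancelling to leave $\tfrac12\tr(\ddqt^2) - \tfrac12\sum_{i,j}\gamma_i\gamma_j\ipM{\ddqt\vec v_i,\vec v_j}^2$. The only (cosmetic) difference is that the paper computes the two trace approximations separately before substituting, whereas you combine them first and exhibit the cancellation directly; your reading of the first term as $\ipM{\rGpo\bq,\bq}$ is also the intended one.
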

\begin{proof}
See Appendix~\ref{appdx:spectral}.
\qed
\end{proof}

Note that the second term in \eqref{eq:rPsi_expression}, $\tr(\ddqt^2)$, is a constant that 
does not depend on the experimental design (sensor placement). 
Therefore, when seeking to optimize $\mat \Psi_k$ as a function of $\vec w$, we can neglect that constant term 
and focus instead on minimizing the functional
\begin{equation}\label{eq:psi_r_reduced}
	\PsiSpec \defeq \ipM{\rGpo \bq, \bq}
	- \frac12  \sum_{i,j=1}^k \gamma_i \gamma_j \ip{\ddqt \vec v_{i}, \vec v_j}_{\mat{M}}^2.
\end{equation}
The spectral approach for estimating the $\gq$-optimality criterion is outlined
in Algorithm~\ref{alg:spectral}. 
\begin{algorithm}[ht]
	\caption{Algorithm for computing $\PsiSpec$.}
	\begin{algorithmic}[1]
		\STATE \textbf{Input:} method for applying $\fHmist$ to vectors
		\STATE \textbf{Output:} $\PsiSpec$
		\STATE Compute the leading eigenpairs $\{(\lambda_i, \vec v_i)\}_{i=1}^k$ of $\fHmist$
		\STATE Set $\gamma_i = \lambda_i / (1 + \lambda_i)$, $i = 1, \ldots, k$
        \STATE Compute $\tilde{\vec{v}}_i = \Gpr^{1/2}\vec{v}_i$, for $i = 1, \ldots, k$ 
		\STATE Compute $\tilde{\vec q}_i = \ddq \tilde{\vec{v}}_i$ 
		\STATE Compute $\bq = \ddq(\vec \mpr-\vec \mbar) + \dq$ 
		\STATE Compute $\vec s = \Gpr \bq - \sum_{i=1}^k \gamma_i \ipM{\vec b, \tilde{\vec{v}}_i} \tilde{\vec{v}}_i$
		\hfill\COMMENT{$\vec s = \rGpo \bq$}
		\STATE Compute 
		\[
			\PsiSpec = \ipM{\vec s, \bq}  -\frac12  \sum_{i,j=1}^k 
	            \gamma_i \gamma_j \ipM{\tilde{\vec q}_i, \tilde{\vec{v}}_j}^2
		\]
	\end{algorithmic}
	\label{alg:spectral}
\end{algorithm}

Note that the approximate 
posterior covariance operator $\rGpo$ can be used to 
estimate the classical A-optimality criterion as well. Namely, we can use 
$\tr(\Gpo) \approx \tr(\rGpo)
= \tr(\Gpr) - \tr\paren{\Gpr^{1/2}\mat V_{r}\mat D_{k}\mat V_{k}^{*}\Gpr^{1/2}}$.
Since $\Gpr$ is independent of the experimental design, A-optimal designs can be obtained by minimizing
\begin{equation}\label{eq:theta_r_reduced}
	\mat\Theta_\text{k} \defeq - \tr\paren{\Gpr^{1/2}\mat V_{k}\mat D_{k}\mat V_{k}^{*}\Gpr^{1/2}}.
\end{equation}
Furthermore, the present spectral approach can also be used for fast computation of 
the $\gl$-optimality criterion. 
In particular, it is straightforward to note 
that $\gl$-optimal designs can be computed by minimizing
\[
\PsiSpec^{\ell} \defeq -\sum_{i=1}^k \gamma_i \ipM{\vec{v}_i, \dq}^2.	
\]
This is accomplished by substituting $\rGpo$ into the discretized 
$\gl$-optimality criterion, given by \eqref{eq:psi_lin_disc}, and performing some basic  
manipulations.

\subsection{An approach based on low-rank SVD of $\fF$}
\label{sec:goed_lowrankF}
In this section, we present an algorithm for estimating the $\gq$-optimality
criterion that relies on computing a low-rank SVD of
prior-preconditioned forward operator.
This approach relies on the specific form of the $\vec{w}$-dependent 
posterior covariance operator; see~\eqref{eq:discretized_classical_criterion}.

Before outlining our approach, we make the additional definitions
\begin{equation}\label{eq:low_rank_defs}
\fFt  \defeq \fF\Gpr^{1/2}, \quad \fFtw \defeq \Ws^{1/2} \fFt,\quad \fDw  \defeq (\mat{I} + \fFtw \fFtw^*)^{-1}, \quad \fPtw  \defeq (\mat{I} + \fFt^*\Ws\fFt)^{-1}.
\end{equation}

The following result enables a tractable representation for the $\gq$-optimality criterion.
\begin{proposition}\label{proposition:low_rank}
Consider the operators as defined in \eqref{eq:low_rank_defs}. 
The following hold:

\begin{tabular}{ll}
\quad(a) & $\fPtw = \mat{I} - \fFtw^* \fDw \fFtw$;\\
\quad(b) & $\tr\paren{\Gpr\ddq\Gpo\ddq} = \tr(\ddqt^2) -\tr(\fFtw \ddqt^2\fFtw^*\fDw)$;\\
\quad(c) & $\tr\big((\Gpo\ddq)^{2}\big) = 
\tr(\ddqt^2)-2\tr(\fFtw \ddqt^2\fFtw^*\fDw) + \tr\Big( (\fDw\fFtw \ddqt\fFtw^*)^2\Big)$. 
\end{tabular}
\end{proposition}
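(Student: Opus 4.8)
The plan is to reduce all three claims to the single structural identity $\Gpo = \Gpr^{1/2}\fPtw\Gpr^{1/2}$, the weighted analogue of \eqref{eq:postrior_with_P}, after which only the Sherman--Woodbury--Morrison formula and cyclicity of the trace are required. First I would record two preliminary observations. Factoring $\Gpr^{1/2}$ out of each side of the weighted posterior covariance in \eqref{eq:discretized_classical_criterion} gives $\Gpo = \Gpr^{1/2}(\mat I + \fFt^{*}\Ws\fFt)^{-1}\Gpr^{1/2} = \Gpr^{1/2}\fPtw\Gpr^{1/2}$, where $\fFt = \fF\Gpr^{1/2}$. Second, since $\Ws = \sig^{-2}\mathrm{diag}(\vec w)$ is diagonal and positive semidefinite, $\Ws^{1/2}$ is selfadjoint, whence $\fFt^{*}\Ws\fFt = (\Ws^{1/2}\fFt)^{*}(\Ws^{1/2}\fFt) = \fFtw^{*}\fFtw$ and therefore $\fPtw = (\mat I + \fFtw^{*}\fFtw)^{-1}$.

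Part (a) is then immediate from the Sherman--Woodbury--Morrison (push-through) identity applied to $(\mat I + \fFtw^{*}\fFtw)^{-1}$, which gives $\fPtw = \mat I - \fFtw^{*}(\mat I + \fFtw\fFtw^{*})^{-1}\fFtw = \mat I - \fFtw^{*}\fDw\fFtw$.

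For (b) and (c) I would absorb the prior factors into the goal Hessian. Substituting $\Gpo = \Gpr^{1/2}\fPtw\Gpr^{1/2}$ and cyclically permuting the outermost $\Gpr^{1/2}$, every occurrence of $\Gpr^{1/2}\ddq\Gpr^{1/2}$ collapses to $\ddqt$, yielding $\tr(\Gpr\ddq\Gpo\ddq) = \tr(\ddqt^{2}\fPtw)$ and $\tr\big((\Gpo\ddq)^{2}\big) = \tr\big((\fPtw\ddqt)^{2}\big)$. Inserting $\fPtw = \mat I - \fFtw^{*}\fDw\fFtw$ into the former and using cyclicity to rewrite $\tr(\ddqt^{2}\fFtw^{*}\fDw\fFtw) = \tr(\fFtw\ddqt^{2}\fFtw^{*}\fDw)$ proves (b). For (c) I would expand $(\fPtw\ddqt)^{2} = (\ddqt - \fFtw^{*}\fDw\fFtw\ddqt)^{2}$ into four terms; the two cross terms are equal by cyclicity and each reduces to $\tr(\fFtw\ddqt^{2}\fFtw^{*}\fDw)$, accounting for the coefficient $-2$, while the remaining quadratic term is rearranged, by moving its leading factor $\fFtw^{*}$ to the rear, into $\tr\big((\fDw\fFtw\ddqt\fFtw^{*})^{2}\big)$.

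The manipulations are routine once the factorization is in hand; the only point needing care is the bookkeeping of adjoints in the mass-weighted inner product $\ipM{\cdot,\cdot}$, so that $\fFtw^{*}$ is consistently the adjoint of $\fFtw\colon \mR^{N}_{\mat M}\to\R^{d}$ and the identity $\fFt^{*}\Ws\fFt = \fFtw^{*}\fFtw$ is legitimate. Because the trace is basis independent, the cyclic permutations used above hold without restriction, so I anticipate no genuine obstacle beyond this adjoint consistency and careful term tracking in (c).
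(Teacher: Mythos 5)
Your proposal is correct and follows essentially the same route as the paper: part (a) is the Sherman--Woodbury--Morrison/push-through identity (the paper unpacks the mass-weighted adjoint via $\fFt^* = \mat{M}^{-1}\fFt^\top$ explicitly, whereas you invoke the coordinate-free identity $(\mat I + A^*A)^{-1} = \mat I - A^*(\mat I + AA^*)^{-1}A$ after noting $\fFt^*\Ws\fFt = \fFtw^*\fFtw$, which is equivalent), and parts (b) and (c) are exactly the "algebraic manipulations using the identity for $\fPtw$" that the paper leaves to the reader. Your expansion of $(\fPtw\ddqt)^2$ and the cyclic-trace bookkeeping for the cross and quadratic terms check out.
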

\begin{proof}
See Appendix~\ref{sec:proof_of_prp_lowrank}.
\qed
\end{proof}
Using Proposition~\ref{proposition:low_rank}, we can 
state the $\gq$-optimality criterion $\mat\Psi$ in~\eqref{eq:goed_discretized}
as 
\begin{equation}\label{eq:goed_discretized_alt}
    \mat\Psi(\vec w) = \ipM{\Gpo(\vec w)\bq, \bq}
    + \frac12 \tr(\ddqt^2) -\frac12 \tr\big( (\fDw\fFtw \ddqt\fFtw^*)^2\big).
\end{equation}
Note that the second term is independent of the design weights $\vec{w}$. Thus, 
we can ignore this term when minimizing $\mat\Psi$. In that case, we 
focus on
\begin{equation}\label{eq:goed_discretized_Phi}
    \PsiSVD \defeq  \ipM{\Gpo(\vec w)\bq, \bq} - \frac12 \tr\big( (\fDw\fFtw \ddqt\fFtw^*)^2\big).
\end{equation}
Computing the first term requires applications of $\Gpo$ to vectors. 
We note,
\[
\Gpo(\vec w) \vec{v} = \Gpr^{1/2} \fPtw \Gpr^{1/2} \vec v = 
\Gpr^{1/2} 
(\mat{I} - \fFtw^* \fDw \fFtw)\Gpr^{1/2} \vec v, \qquad 
\vec v \in \R^N.
\]
This only requires a linear solve in the measurement space,
when computing $\fDw \fFtw\Gpr^{1/2} \vec{v}$. 
Once a low-rank 
SVD of $\fFt$ is available, 
this can be done without performing any PDE solves.
The trace term in \eqref{eq:goed_discretized_Phi}
can also be computed efficiently.  First, we build 
\[
\mat{Q} \defeq \fFtw\ddqt\fFtw^*   
\]
at the cost of $d$ applications of $\ddq$ to vectors. 
The remaining part of computing $\hat{\mat\Psi}$ does not require any PDE solves. 
Let $\ip{\cdot,\cdot}_{2}$ denote the Euclidean inner product and $\{ \vec{e}_i\}_{i=1}^d$ 
the standard basis in $\R^d$.
We have
\begin{equation}\label{eq:trace_term_Phi}
    \tr\big( (\fDw\fFtw \ddqt\fFtw^*)^2\big)
    = \sum_{i=1}^d \ip{\fDw\mat{Q}\vec{e}_i, \mat{Q}\fDw\vec{e}_i}_{2}
\end{equation}
Computing this expression requires calculating $\fDw \vec{e}_i$, 
for $i \in \{1, \ldots, d\}$, which amounts to building $\fDw$. 
We are now in a position to present and an algorithm for computing 
the $\gq$-optimality criterion using a low-rank SVD of $\fFt$. This is summarized in Algorithm~\ref{alg:low_rank}.
\begin{algorithm}[ht]
	\caption{Algorithm for computing $\PsiSVD$}.
	\begin{algorithmic}[1]
		\STATE \textbf{Input:}
	    Precomputed $\vec{s} = \Gpr^{1/2}(\ddq(\vec \mpr-\vec \mbar) + \dq)$ and 
		\STATE
		\phantom{Input:~} rank-$r$ approximation $\fFtwr\approx\tilde\fF$ 
		\hfill\COMMENT{only applications of $\fFtwr$ to vectors are required}
		\STATE \textbf{Output:} $\PsiSVD$
		\STATE Build $\fDw = \big(\mat I + \fFtwr(\fFtwr)^{*}\big)^{-1}$
		\STATE Build $\mat Q = \fFtwr\ddqt(\fFtwr)^*$
		\STATE Compute 
		\[
			\PsiSVD = \ipM{\big[\mat{I} - (\fFtwr)^* \fDw \fFtwr\big]\vec s, \vec s} -\half \sum_{i=1}^{d}\ip{\fDw \mat Q\vec e_{i}, \mat Q \fDw \vec e_{i}}_{2}
		\]
	\end{algorithmic}
	\label{alg:low_rank}
\end{algorithm}

\subsection{Computational cost}\label{sec:computational_cost}
Here, we discuss the computational cost of the three algorithms presented above.
We measure complexity in terms of applications of the operators $\fF$ and
$\fF^{*}$, $\Gpr$, and $\ddq$. Note that $\fF$ and $\fF^*$
correspond to forward and adjoint PDE solves.
First we highlight the key computational considerations for each algorithm. 
\begin{description}
\item[Computing $\PsiRand$:] 
The bottleneck in evaluating $\PsiRand$ is the need for $p$ applications of
$\Gpo$ and $2p$ applications of $\ddq$.  We assume that a Krylov iterative
method is used to apply $\Gpo$ to vectors, requiring  $\mc{O}(r)$ iterations. In
the present setting, $r$ is determined by the numerical rank of the
prior-preconditioned data-misfit Hessian.  Thus, each application of $\Gpo$ 
requires $\mc{O}(r)$ forward and adjoint solves. 

\item[Computing $\PsiSpec$:]
In Algorithm~\ref{alg:spectral}, we need to compute the $k$ leading eigenpairs of $\fHmist$.
In our implementation, we use the Lanczos method, costing $\mc{O}(k)$
applications of $\fF$ and $\fF^{*}$. 
Note also that Algorithm~\ref{alg:spectral} requires $k + 1$ applications of $\ddq$ to vectors.

\item[Computing $\PsiSVD$:]
This algorithm requires a low-rank SVD of $\fFt$ computed up-front.  This can be
done using a Krylov iterative method or randomized
SVD~\cite{HalkoMartinssonTropp11}.  In this case, a rank $r$ approximation costs
$\mc{O}(r)$ applications of $\fF$ and $\fF^{*}$. This algorithm also requires 
$d$ applications of $\ddq$. 
\end{description}

For readers' convenience, we summarize the computational complexity of 
the methods in Table~\ref{tab:complexity}.
\begin{table}[ht]
	\centering
	\caption{Computational complexity of the three algorithms in Section~\ref{sec:method} 
	in terms of number of forward/adjoint solves ($\fF$/$\fF^{*}$) and goal-Hessian applications 
	($\ddq$). Randomized, spectral, 
	and low-rank SVD refer to Algorithm~\ref{alg:randomized}, Algorithm~\ref{alg:spectral}, 
	and Algorithm~\ref{alg:low_rank}, respectively. Note that the integer $r$, 
	required in Algorithm~\ref{alg:randomized} and Algorithm~\ref{alg:low_rank}, is independently selected.}
	\begin{tabular}{lcc}
		\toprule
		Algorithm & $\fF$/$\fF^{*}$ & $\ddq$\\
		\midrule
		randomized & $N_{tr} \mc{O}(r)$ & $2 p + 1$\\
		spectral & $\mc{O}(k)$ & $k+1$\\
		low-rank SVD & - & $d$\\
		\bottomrule
	\end{tabular}
	\label{tab:complexity}
\end{table}

A few remarks are in order. Algorithm~\ref{alg:spectral} and
Algorithm~\ref{alg:low_rank} are more accurate than the randomized approach in
Algorithm~\ref{alg:randomized}. When deciding between the spectral and low-rank
SVD algorithms, several considerations must be accounted for. First, we note
that the spectral approach is particularly cheap when the size of the desired
design, i.e., the number of active sensors, is small. 
If the design vector $\vec w$ is such that
$\|\vec w\|_1 = k$, then $\text{rank}(\fHmist) \leq k$. Thus, we only require the
computation of the $k$ leading eigenvalues of $\fHmist$. The low-rank SVD
approach is advantageous in the case when the forward model $\fF$ is expensive
and applications of $\ddq$ are relatively cheap. This is due to the fact that no
forward or adjoint solves are required in Algorithm~\ref{alg:low_rank}, after precomputing
the low-rank SVD of $\tilde{\fF}$. However, the number of applications of $\ddq$
is fixed at $d$, where $d$ is the number of candidate sensor locations. Lastly,
we observe that all algorithms given in Section~\ref{sec:method} may be modified to
incorporate the low-rank approximation of $\ddq$. Implementation of this is
problem specific. Thus, methods presented are agnostic to the structure of
$\ddq$

\section{Computational experiments}\label{sec:results}

In this section we consider two numerical examples.  The first one, concerns
goal-oriented OED where the goal-functional is a quadratic functional.
In that case, the second order Taylor expansion provides an exact representation
of the goal-functional.  That example is used to provide an intuitive
illustration of the proposed strategy; see Section~\ref{sec:numerics_quad}.  In
the second example, discussed in Section~\ref{sec:numerics_nonlinear}, the
goal-functional is nonlinear. In that case, we consider the inversion of a
source term in a pressure equation, and the goal-functional is defined in terms
of the solution of a second PDE, modeling diffusion and transport of a
substance. That example enables testing different aspects of the proposed
framework and demonstrating its effectiveness.  In particular, we demonstrate the
superiority of the proposed $\gq$-optimality framework over the $\gl$-optimality and classical A-optimality
approaches, in terms of reducing uncertainty in the goal. 

\subsection{Model problem with a quadratic goal functional}
\label{sec:numerics_quad}
Below, we first describe the model inverse problem under study and the goal-functional.
Subsequently, we present our computational results. 

\subsubsection{Model and goal}
We consider the estimation of the source term $m$ 
in the following stationary advection-diffusion equation: 
\begin{equation}\label{eq:model1}
\begin{alignedat}{2}
-\alp \Delta u + \vec{v} \cdot \grad u &= m, \quad &\text{in} \ \Omega,\\
u &\equiv 0, \quad &\text{on} \ E_{1},\\
\grad u \cdot \vec{n} &\equiv 0, \quad &\text{on} \ E_{2}.
\end{alignedat}
\end{equation}
The goal-functional is defined as the $L^2$ norm of the solution to
\eqref{eq:model1}, restricted to a subdomain $\Omega^* \subset \Omega$.
To this end, we consider the restriction operator
\[
    (\mc{R}u)(\vec x) \defeq 
		\begin{cases}
			u(\vec x) 	\quad \text{if } \vec{x} \in \Omega^*,\\
			0			\quad \text{if} x \in \Omega \setminus \Omega^{*},
		\end{cases}
\]
and define the goal-functional by 
\[
	\Q(m) := \frac{1}{2}\ip{ \mc{R}u(m), \mc{R}u(m)}, \quad m \in \ms{M}.
\]
Recalling that $\iS$ is the solution operator to \eqref{eq:model1}, we can equivalently describe the goal as
\begin{equation}\label{eq:goal1}
\Q(m) = \frac{1}{2}\ip{\iA m, m }, \quad \text{where} \quad \iA := \iS^{*}\mc{R}^*\mc{R}\iS.
\end{equation}

\subsubsection{The inverse problem}
In~\eqref{eq:model1}, we take the diffusion constant to be $\alp = 0.1$ and
velocity as $\vec v = [0.1, -0.1]$.  Additionally, 
we let $E_1$ be the union of the left and top edges of $\Omega$ and 
$E_2$ the union of the right and bottom edges.
\begin{figure}[ht]
	\centering
		\includegraphics[width=.45\textwidth]{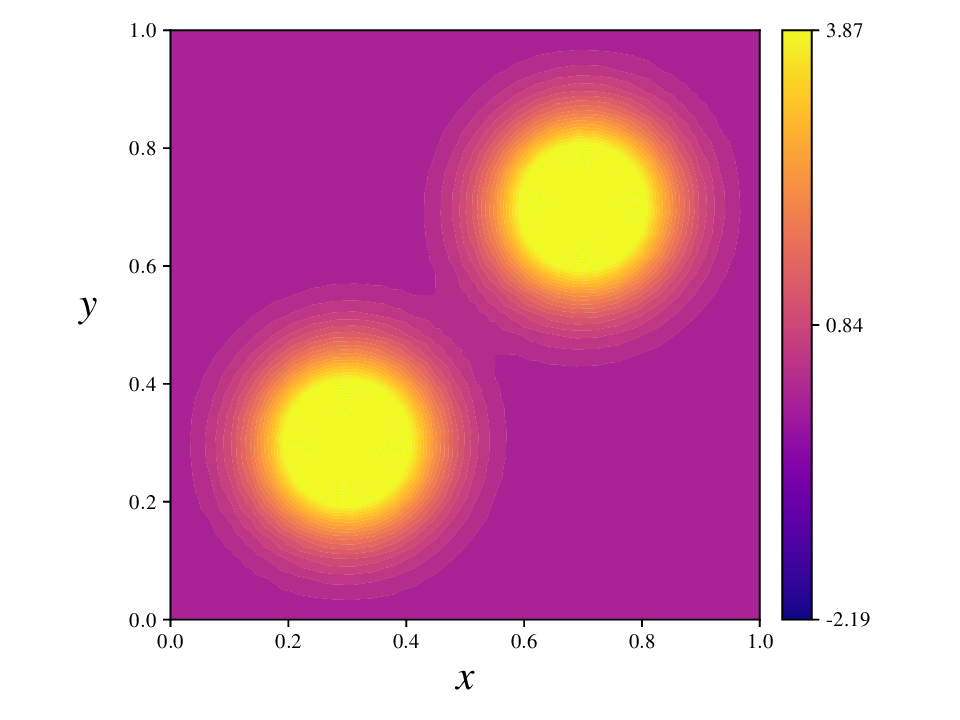}
		\includegraphics[width=.45\textwidth]{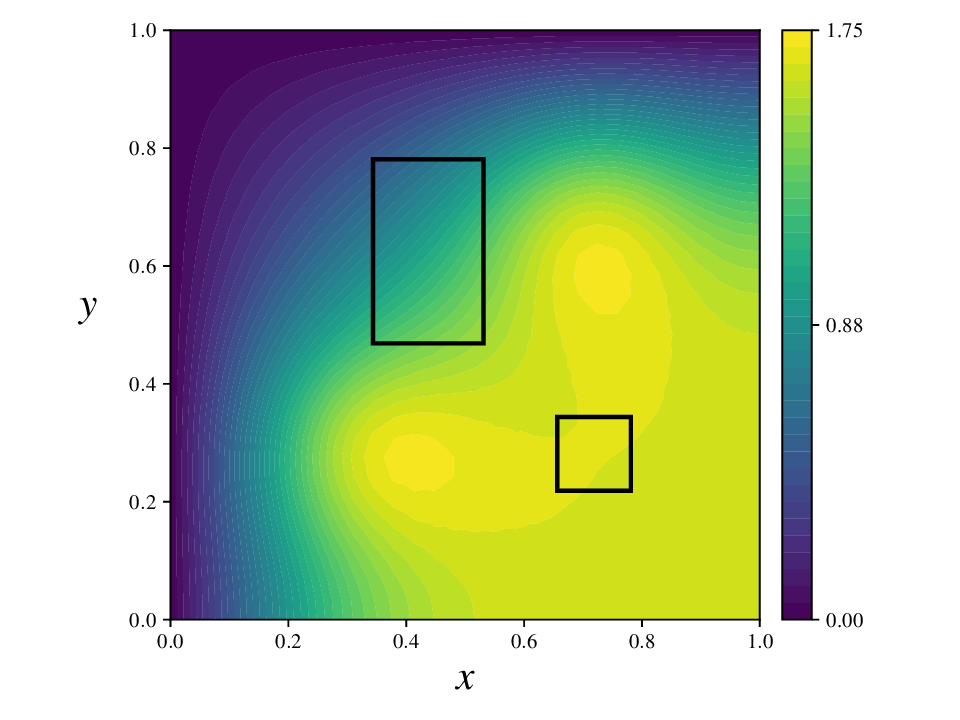}
	\caption{The true inversion parameter $\mtrue$ (left) and corresponding state solution $p(\mtrue)$ (right). The subdomain $\Omega^{*}$ (black rectangles) is depicted in the left figure.}
    \label{fig:model1}
\end{figure}
In the present experiment, we use a ground truth parameter $m_{\text{true}}$, 
defined as the sum of two Gaussian-like functions to generate a 
data vector $\vec y$.  We depict
our choice of $m_\text{true}$ and the corresponding state solution 
in Figure~\ref{fig:model1}. Note that in Figure~\ref{fig:model1}~(right), 
we also depict our choice of the subdomain $\Omega^*$ for the present example.
Additionally, the noise variance is set to $\sig^{2} =
10^{-4}$. This results in a roughly $\%1$ noise-level.  As for the prior, we
select the prior mean as $\mpr\equiv 4$ and use $(a_{1}, a_{2}) =
(8\cdot10^{-1}, 4^{-2})$ in~\eqref{eq:elliptic_PDE}.
As an illustration, we visualize the
MAP-point and several posterior samples in Figure~\ref{fig:quad_posterior}.

\begin{figure}[ht]
	\centering
	\includegraphics[width=\textwidth]{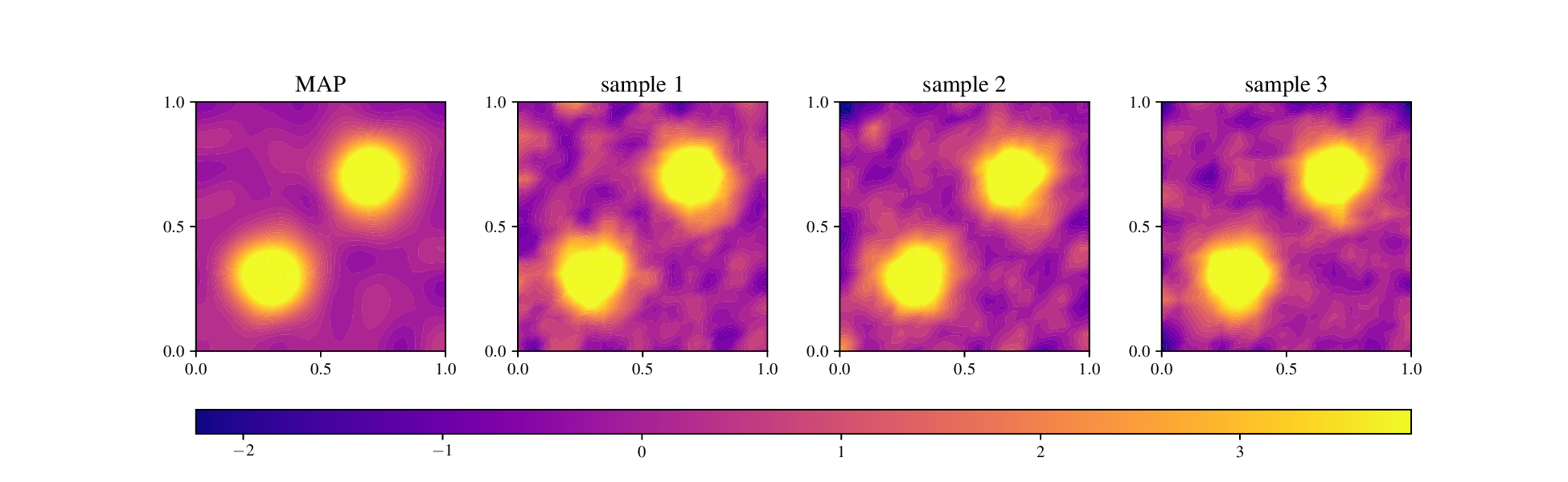}
	\caption{MAP point (leftmost) and three posterior samples. The posterior is obtained using 
	data collected from the entire set of $N_{s}$ candidate sensor locations.
	}
	\label{fig:quad_posterior}
\end{figure}

For all numerical experiments in this paper, we use a continuous Galerkin finite element discretization
with piecewise linear nodal basis functions and $N_{x} = 30^{2}$ spatial grid
points.  Regarding the experimental setup, we use $N_{s} = 15^{2}$ candidate
sensor locations distributed uniformly across the domain.
Implementations in the present work are conducted in python and 
finite element discretization is performed with FEniCS~\cite{fenics2015}.

\subsubsection{Optimal design and uncertainty}
In what follows, we choose the spectral method for computing the classical and
goal-oriented design criteria, due to its accuracy and computational efficiency. 
A-optimal designs are obtained by minimizing
\eqref{eq:theta_r_reduced}. As for the $\gq$-optimality criterion, we implement the spectral
algorithm as outlined in Algorithm~\ref{alg:spectral}.
Let $\mat A$ be the discretized version of operator $\iA$ in~\eqref{eq:goal1}. 
In the context of this problem, 
the $\gq$-optimality criterion, resulting from \eqref{eq:psi_r_reduced}, is
\begin{equation}\label{eq:quad_criterion}
	\PsiSpec(\vec w) = \ip{\rGpo \mat A \vec \mpr, \mat A \vec \mpr}_{\mat M}-\frac12  \sum_{i,j=1}^k 
	 \gamma_i \gamma_j \ipM{\mat A \vec v_{i}, \vec v_j}^2.
\end{equation}	            

\begin{figure}[ht]
	\centering
	\includegraphics[width=\textwidth]{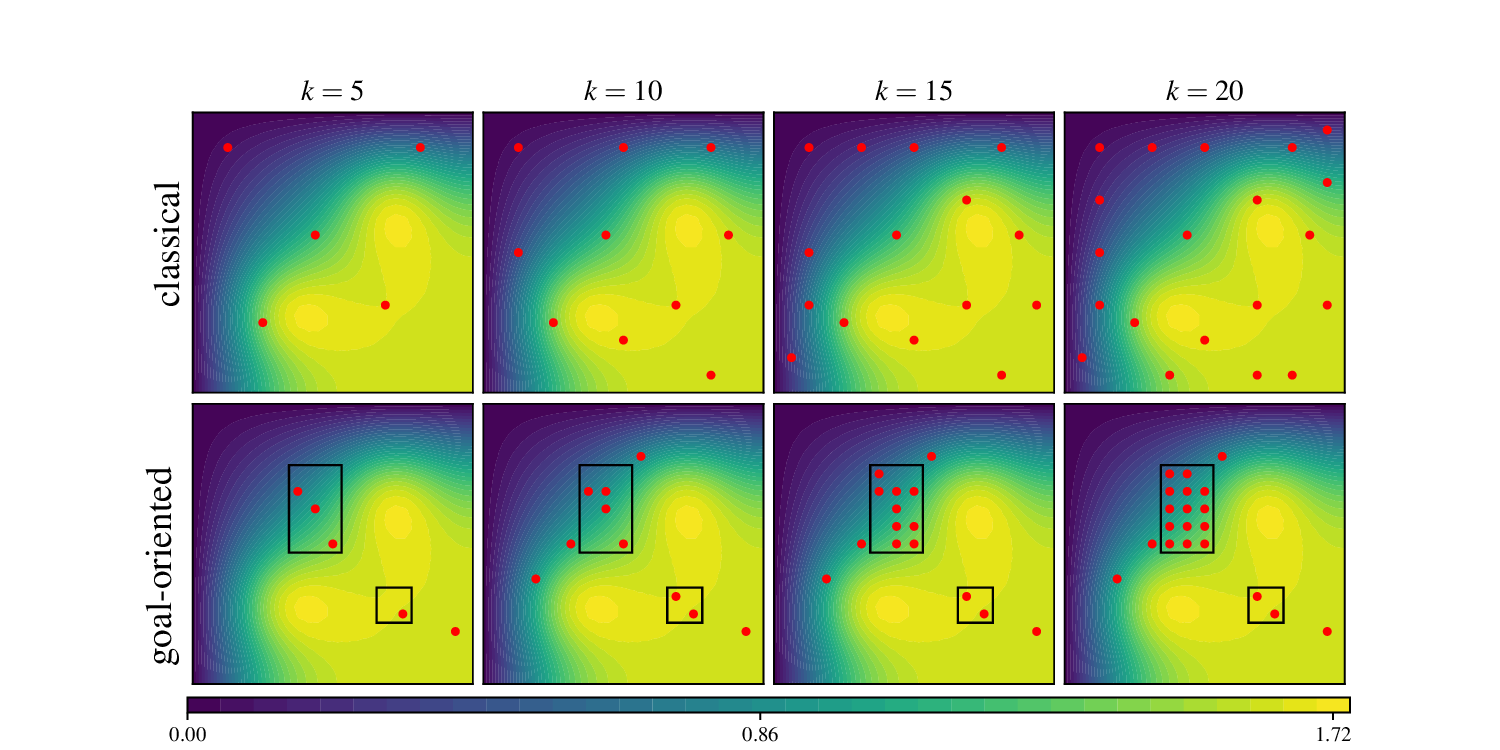}
	\caption{Classical (A-optimal) and goal-oriented ($\gq$-optimal) designs of size $k\in\{5, 10, 15, 20\}$ plotted over the true state solution. The subdomain $\Omega^{*}$ (black rectangles) are plotted over the goal-oriented plots.}
	\label{fig:classic_goal_designs1}
\end{figure}

Both classical A-optimal and goal-oriented $\gq$-optimal designs are obtained with the greedy algorithm.
As a first illustration, we plot both types of designs over the state solution
$u(\mtrue)$; see Figure~\ref{fig:classic_goal_designs1}.  Note that for the
$\gq$-optimal designs, we overlay the subdomain $\Omega^{*}$, used in the
definition of the goal-functional $\Q$ in~\eqref{eq:goal1}.
In Figure~\ref{fig:classic_goal_designs1}, we observe that the classical
designs tend to spread over the domain, while the $\gq$-optimal designs cluster
around the subdomain $\Omega^*$. 
However, 
while the goal-oriented sensor placements
prefer the subdomain, sensors are not exclusively placed within this region. 

We next illustrate the effectiveness of the $\gq$-optimal designs in reducing
the uncertainty in the goal-functional, as compared to A-optimal designs.  In
the left column of Figure~\ref{fig:compare_quad}, we consider posterior
uncertainty in the goal-functional (top) and the inversion parameter (bottom)
when using classical designs with $k = 5$ sensors. Uncertainty in the goal
functional is quantified by inverting on a given design, then propagating
posterior samples through the goal functional. We refer to the computed
probability density function of the goal values as a \emph{goal-density}.
Analogous results are reported in the right column, when using goal-oriented
designs. Here, the posterior distribution corresponding to each design is
obtained by solving the Bayesian inverse problem, where we use data synthesized
using the ground-truth parameter.  

We observe that the $\gq$-optimal designs are far more effective in reducing
posterior uncertainty in the goal-functional. The bottom row of
the figure reveals that the goal-oriented designs are more effective in reducing
posterior uncertainty in the inversion parameter in and around the subdomain
$\Omega^*$. On the other hand, the classical designs, being agnostic
to the goal functional, attempt to reduce uncertainty in the inversion parameter
across the domain.  While this is intuitive, we point out that the nature of the
goal-oriented sensor placements are not always obvious. Note that for the
$\gq$-optimal design reported in Figure~\ref{fig:compare_quad}~(bottom-right), a sensor is placed near the right boundary. This implies that
reducing uncertainty in the inversion parameter around this location is
important for reducing the uncertainty in the goal-functional. In general, sensor placements are influenced by physical parameters 
such as the velocity field, modeling assumptions 
such as boundary conditions, as well as the definition of the goal-functional.

\begin{figure}[ht]
	\centering
		\includegraphics[width=.75\textwidth]{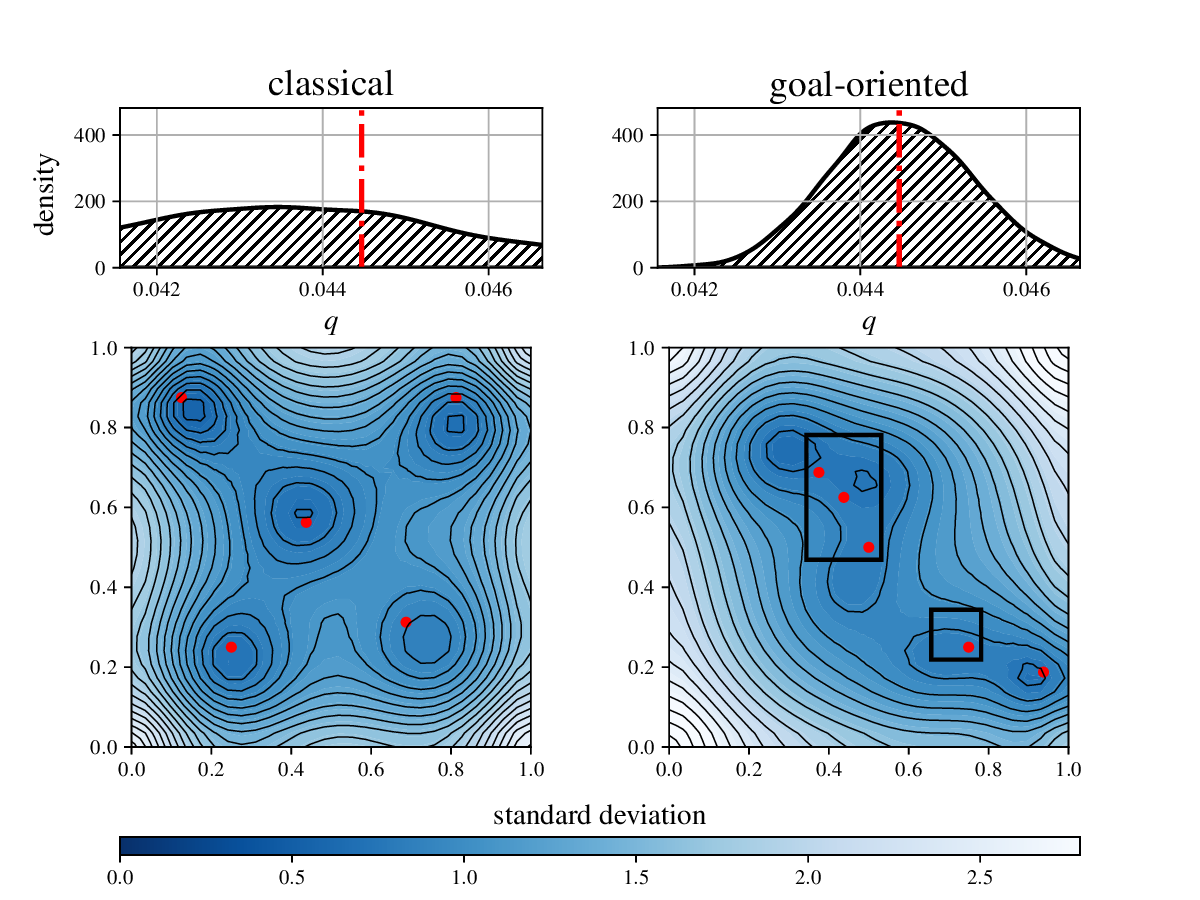}
	\caption{Bottom row: classical (A-optimal) and goal-oriented ($\gq$-optimal) designs of size $k=5$ plotted over the respective posterior standard deviation fields. 
	Top row: posterior goal-densities constructed by propagating posterior samples through $\q$. The dashed line is the true goal value.}
	\label{fig:compare_quad}
\end{figure}

To provide further insight, 
we next consider 
classical and goal-oriented designs with varying number of sensors. 
Specifically, 
we plot the corresponding goal-densities against each other 
in Figure~\ref{fig:violin1}, as the size $k$ 
of the designs increases.
We observe that the densities corresponding to the $\gq$-optimal designs 
have a smaller spread and are closer to the true goal value, when compared to
the densities obtained using the A-optimal designs. 
This provides further evidence that our goal-oriented OED framework is more
effective in reducing uncertainty in the goal-functional when compared to the classical design approach.
\begin{figure}[ht]
	\centering
	\includegraphics[width=1\textwidth]{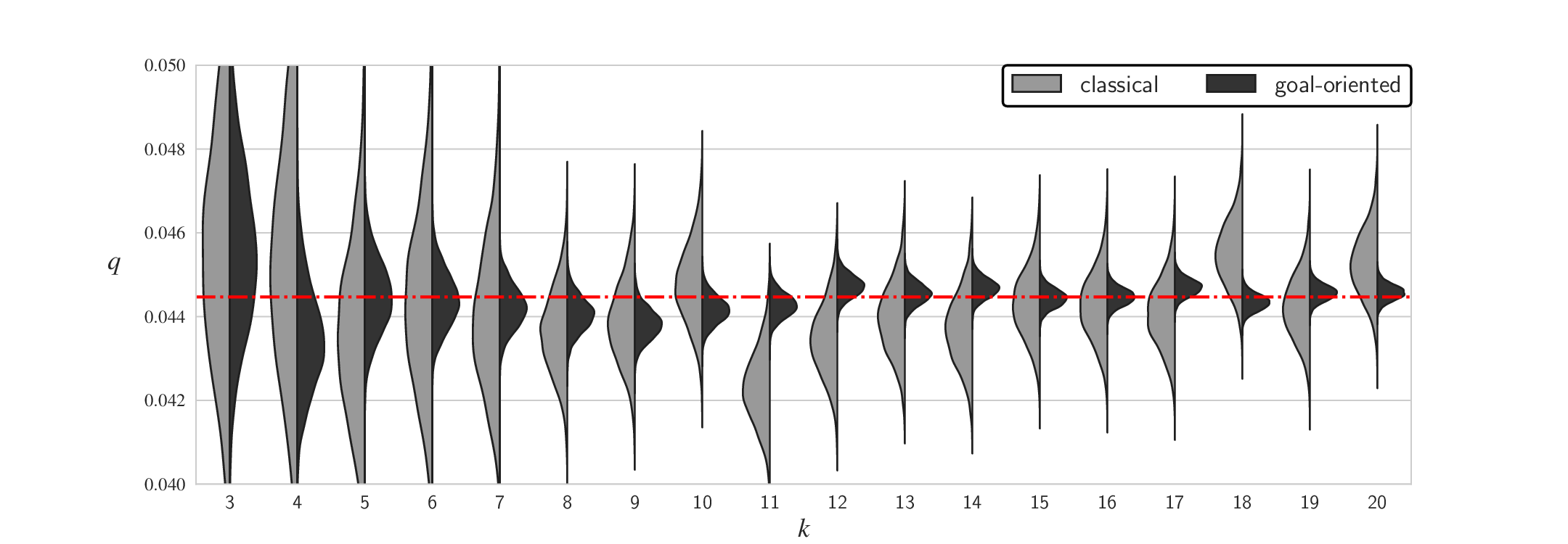}
	\caption{Goal-densities for classical and goal-oriented approaches and $k \in\{ 3, 4,\cdots, 20\}$. The dashed line is $q(\vec \mtrue)$.}
	\label{fig:violin1}
\end{figure}

Next, we compare the effectiveness of classical and goal-oriented designs in
terms of reducing the posterior variance in the goal-functional. 
Note that Theorem~\ref{theorem:variance} provides an 
analytic formula for the variance of 
the goal with respect to a given Gaussian measure. 
Here, for a vector $\vec w$ of design weights, we obtain the MAP
point $\vec\mMAPyw$ by solving the inverse problem using data 
corresponding to the active sensors.  
Then, compute the posterior variance of the goal via
\begin{equation}\label{eq:var_fun1}
	V(\vec w) := \mV_{\mu_{\po}}\crbra{\q} = 
	\ipM{\Gpo(\vec w) \mat A \vec\mMAPyw, \mat A \vec\mMAPyw} + \half\tr\big((\Gpo(\vec w)\mat A)^{2}\big).
\end{equation}

We compute $V^{1/2}$, i.e., the goal standard deviation, for designs
corresponding to the A-optimal and $\gq$-optimal designs. Additionally,
we generate $100$ random weight vectors for each $k \in \{3,\ldots,20\}$ and compute
the resulting values of $V^{1/2}$. The results of this numerical experiment are
presented in Figure~\ref{fig:goal_variance_quad}. We first observe that the
goal standard deviations corresponding to the $\gq$-optimal designs are considerably
smaller than the values for the A-optimal approach. Furthermore, both classical
and goal-oriented methods out-perform the random designs in terms of uncertainty
reduction.
Also, note the large spread in the goal standard deviations, when using random 
designs. 

\begin{figure}[ht]
	\centering
	\includegraphics[width=1\textwidth]{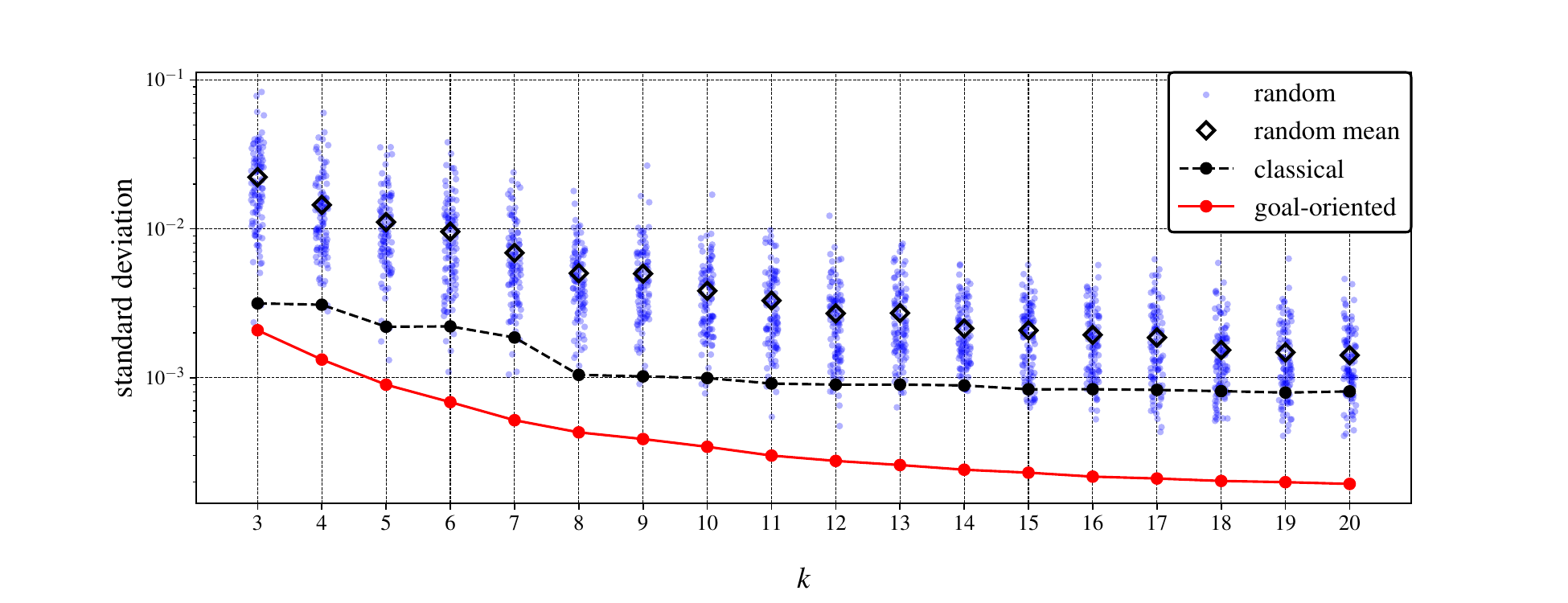}
	\caption{Standard deviations, $(V(\vec w))^{1/2}$, generated with classical (A-optimal) and goal-oriented ($\gq$-optimal) designs of size $k\in\{3,4,\dots,20\}$.}
	\label{fig:goal_variance_quad}
\end{figure}

\subsection{Model problem with a nonlinear goal functional}
\label{sec:numerics_nonlinear}
In this section, we consider an example where the goal-functional depends
nonlinearly on the inversion parameter.  

\subsubsection{Models and goal}\label{sec:nonlinear_model_and_goal}
We consider a simplified model for the flow of a tracer through a porous medium
that is saturated with a fluid.
Assuming a Darcy flow model, the 
system is governed by the 
PDEs modeling
fluid pressure $p$ and tracer concentration $c$.
The pressure equation is given by 
\begin{equation}\label{eq:model2}
\begin{alignedat}{2}
-\grad \cdot (\kap \grad p) &= m, \quad &\text{in} \ \Omega,\\
p &\equiv 0, \quad &\text{on} \ E_{0}^{p},\\
p &\equiv 1/2, \quad &\text{on} \ E_{1}^{p},\\
\grad p \cdot \vec n &\equiv 0, \quad &\text{on} \ E_{\vec n}^{p}.
\end{alignedat}
\end{equation}
Here, $\kappa$ denotes the permeability field.  The transport of the tracer is
modeled by the following steady advection diffusion equation. 
\begin{equation}\label{eq:prediction}
\begin{alignedat}{2}
-\alp \Delta c - \grad \cdot (c \kap\grad p)&= f, \quad \text{in} \ &\Omega,\\
c &\equiv 0, \quad \text{on} \ &E_{0}^{c},\\
\grad c \cdot \vec n &\equiv 0, \quad \text{on} \ &E_{\vec n}^{c}.
\end{alignedat}
\end{equation}
In this equation $\alp > 0$, is a diffusion constant and $f$ is a 
source term. Note that 
the velocity field in the transport equation 
is defined by the Darcy velocity $\vec{v} = -\kappa \nabla p$. 

In the present example, the source term $m$ in~\eqref{eq:model2} is an inversion
parameter that we seek to estimate using sensor measurements of the pressure
$p$.  Thus, the inverse problem is governed by the pressure
equation~\eqref{eq:model2}, which we call the \emph{inversion model} from now
on.  We obtain a posterior distribution for $m$ by solving this inverse problem.
This, in turn, dictates the distribution law for the pressure field $p$.
Consequently, the uncertainty in $m$ propagates into the transport equation
through the advection term in~\eqref{eq:model2}. 

We define the goal-functional by 
\begin{equation}\label{eq:goal2}
\Q(m) \defeq \int_{\Omega^{*}} c(\vec{x}; m) \, d\vec{x} = \ip{\ind(\vec x), c(\vec x; m)},
\end{equation}
where $\Omega^* \subset \Omega$ is a subdomain of interest, and $\ind$ is the indicator function of this set. 
Note that
evaluating the goal-functional requires solving the pressure
equation~\eqref{eq:model2}, followed by solving the transport 
equation~\eqref{eq:prediction}. In what follows, we call~\eqref{eq:prediction} 
the \emph{prediction model}.

Here, the domain $\Omega$ is chosen to be the unit square. In~\eqref{eq:model2},
we set $E_{0}^{p}$ as the right boundary and $E_{1}^{p}$ as the left boundary.
Additionally, $E_{\vec n}^{p}$ is selected as the union of the top and bottom
edges of $\Omega$.  
The permeability field $\kap(\vec x)$ simulates a channel or pocket of higher
permeability, oriented left-to-tight across $\Omega$. We display this field in
Figure~\ref{fig:non_lin_inverse}~(top-left). 

As for the prediction model, we take $E_{0}^{c}$ to be the union of the top,
bottom, and right edges of $\Omega$, and $E_{\vec n}^{c}$ as the left edge.  The
source $f$ in~\eqref{eq:prediction} is a single Gaussian-like function, shown in
Figure~\ref{fig:beta}, and the diffusion constant is set to $\alp = 0.12$. 
Moreover, $\Omega^*$ is given by
\[
\Omega^* = D_1 \cup D_2, \quad \text{with} \quad D_1 = [0.18, 0.32] \times [0.46, 0.68] \quad{and} \quad D_2 = [0.54, 0.75] \times [0.39, 0.75].	
\]

We require a ground-truth inversion parameter $\mtrue$ for data generation. 
This is selected as the sum of two Gaussian-like
functions, oriented asymmetrically; see Figure~\ref{fig:non_lin_inverse}~(top-right).
For the inverse problem, we set the noise variance to $\sig^{2} = 10^{-5}$,
resulting in approximately $1\%$ noise. The prior mean is set to the 
constant function $\mpr \equiv 4$.  The prior
covariance operator 
is defined according to~\eqref{eq:elliptic_PDE} with $(a_{1}, a_{2}) =
(0.8, 0.04)$. We use $N_{x} = 30^{2}$ finite element grid points and $N_{s} = 13^{2}$
equally-spaced candidate sensors. 

We depict the pressure field corresponding to the true parameter along with
the Darcy velocity, in Figure~\ref{fig:non_lin_inverse}~(bottom-left).
The MAP point obtained by solving the inverse problem using all $N_{s}$
sensors is reported in
Figure~\ref{fig:non_lin_inverse}~(bottom-right).
\begin{figure}[ht]
    \centering
        \includegraphics[width=.4\textwidth]{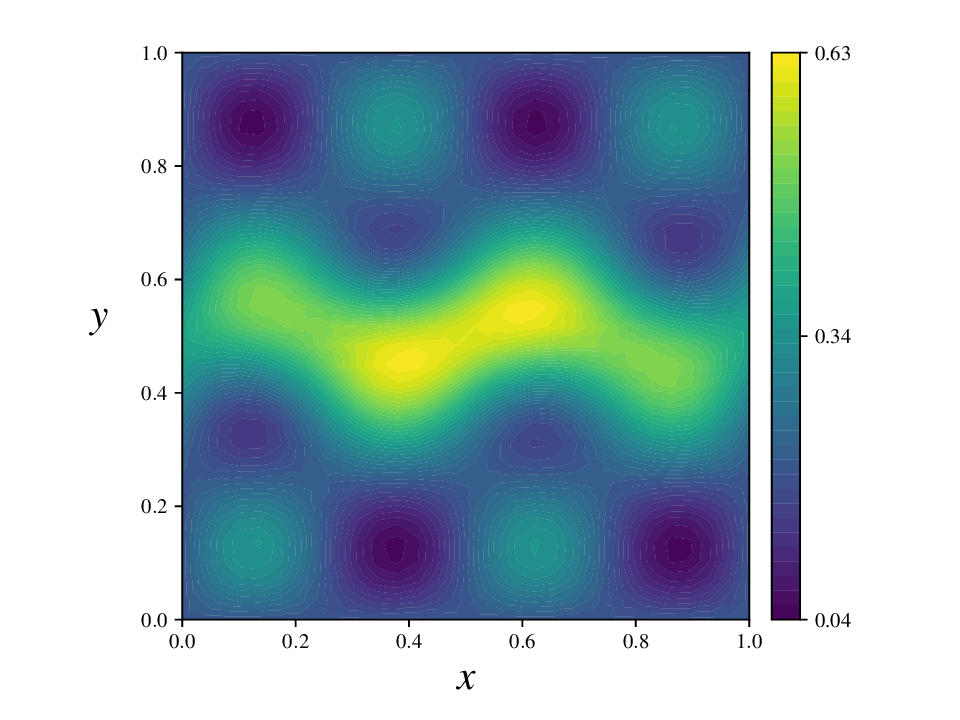}
        \includegraphics[width=.4\textwidth]{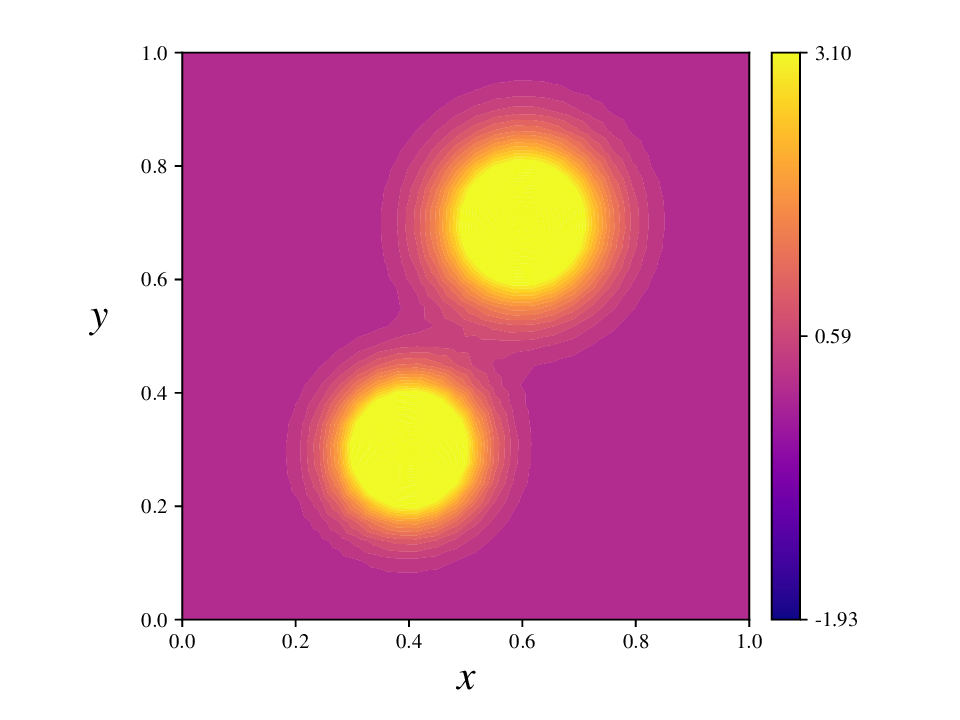}
        \includegraphics[width=.4\textwidth]{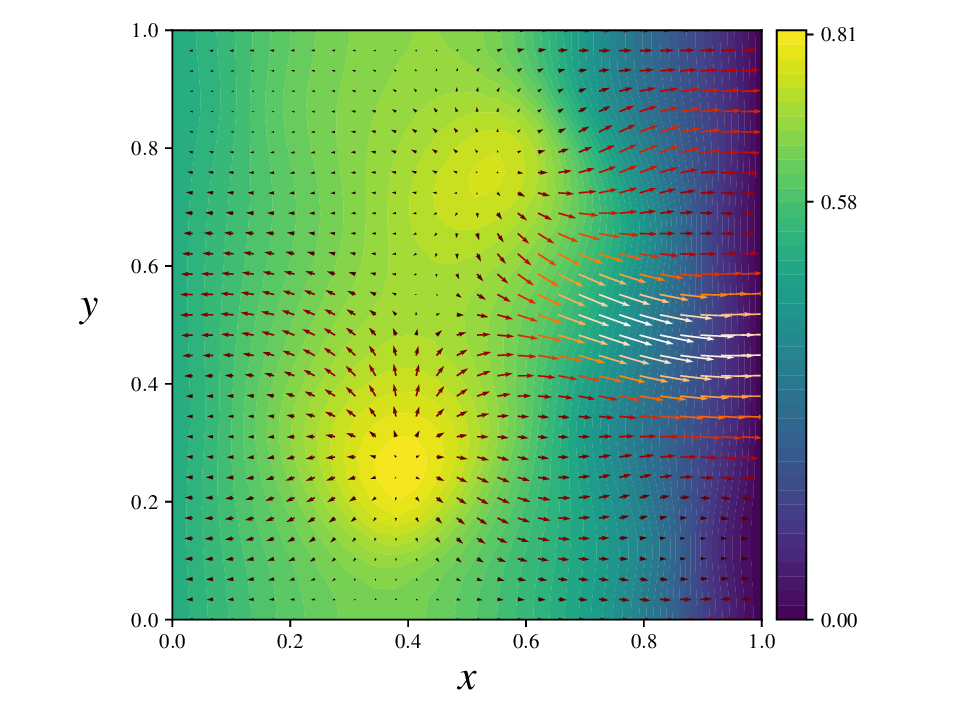}
        \includegraphics[width=.4\textwidth]{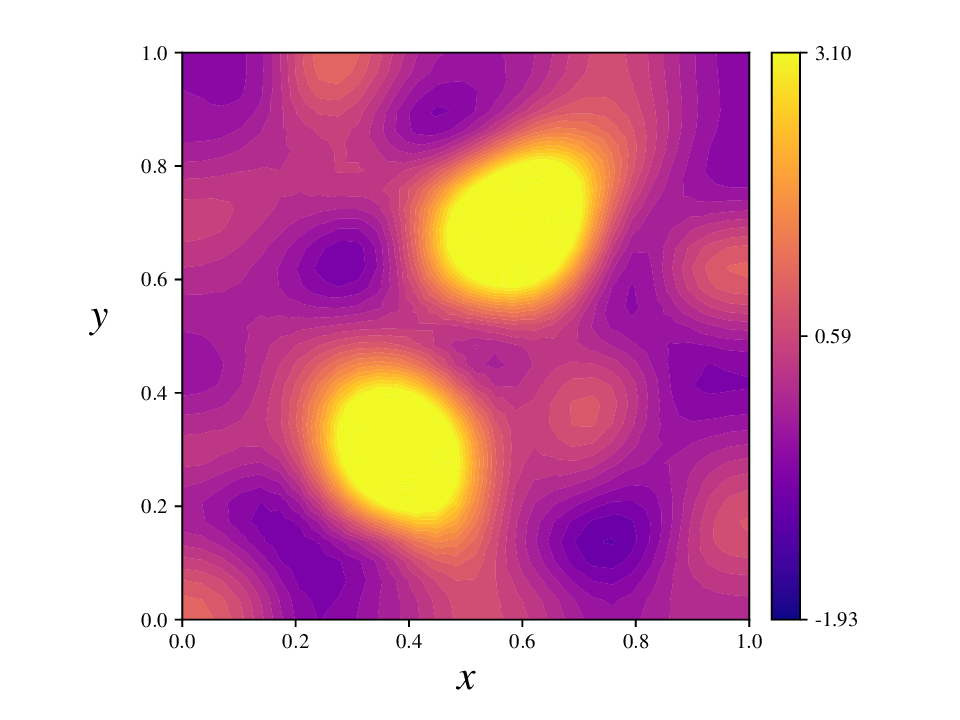}
     \caption{The true inversion parameter $\mtrue$ (top-left), permeability field $\kap(\vec x)$ (top-right), pressure solution $p(\mtrue)$ with Darcy velocity $-\kap(\vec x) \grad p$ overlayed (bottom-left), and MAP-point obtained by inverting on $N_{s}$ uniform sensors (bottom-right).}
	 \label{fig:non_lin_inverse}
\end{figure}

Recall that the goal-functional $\Q$ is formed by integrating $c$ over
$\Omega^{*}$, shown in Figure~\ref{fig:prediction}. To illustrate the dependence of $\Q$ on the inversion parameter, we plot $c(p(m))$ where $m$ is sampled from the posterior distribution. In particular, we generate a random design of size $k=3$, collect data on this design, then retrieve a posterior distribution via inversion. Figure~\ref{fig:prediction} shows $c$ corresponding to $4$ posterior samples. We overlay the subdomain $\Omega^{*}$, used to define $\Q$. 
\begin{figure}[ht]\centering
\includegraphics[width=.4\textwidth]{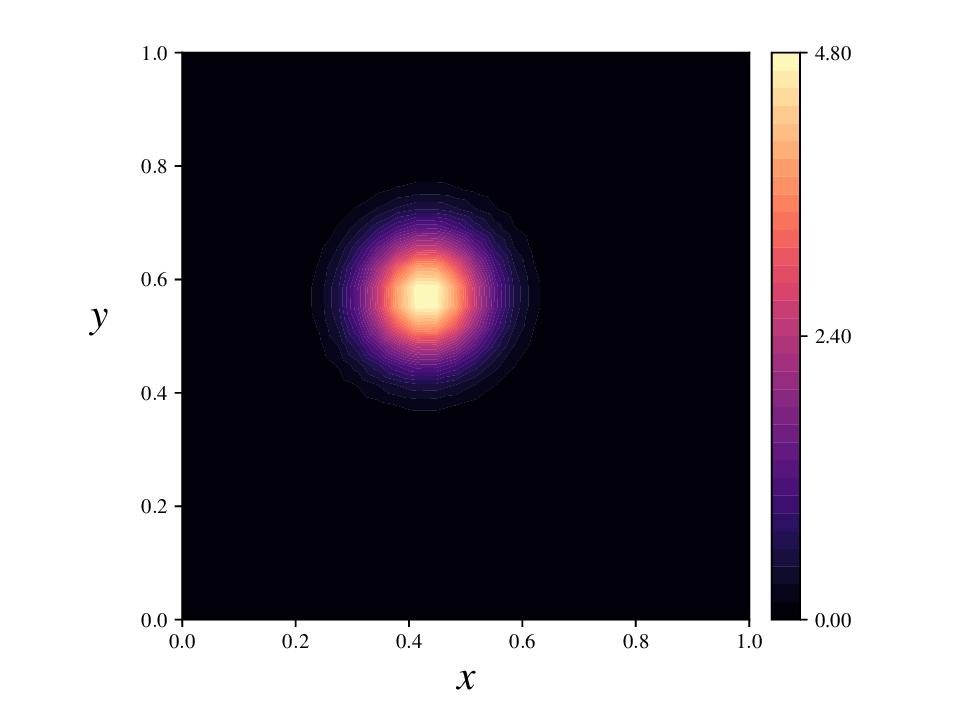}
\caption{The source term $f$ in~\eqref{eq:prediction}.}
\label{fig:beta}
\end{figure}
\begin{figure}[ht]
	\centering
	\includegraphics[width=.9\textwidth]{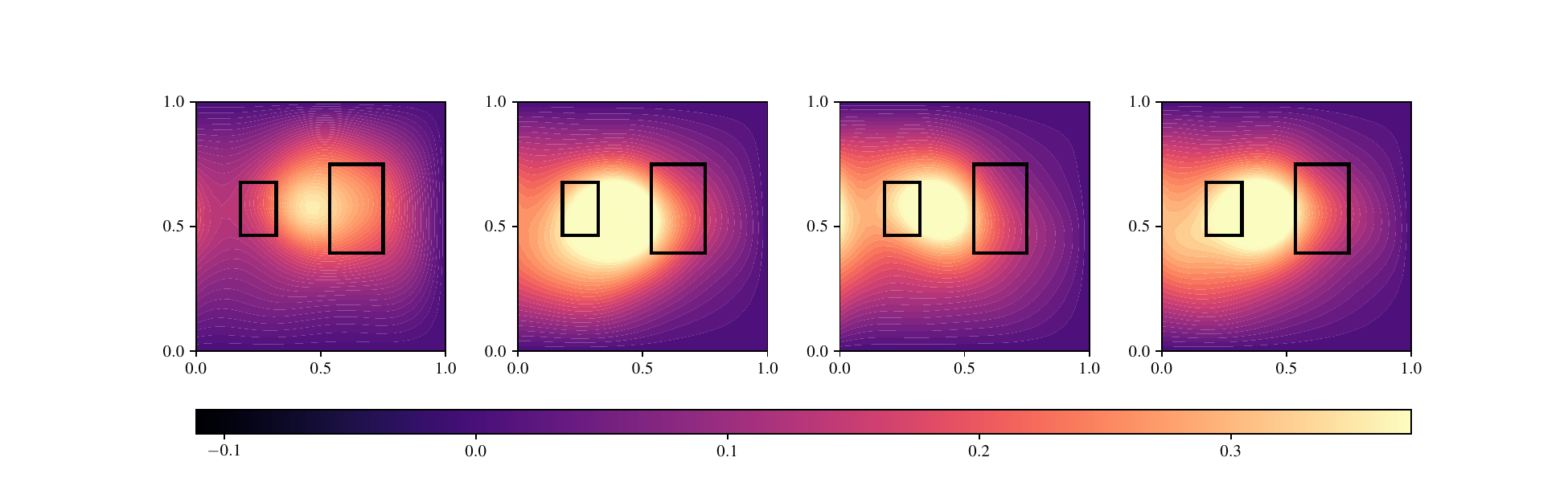}
	\caption{Concentration field $c(p(m))$ generated with $4$ posterior samples. The subdomain $\Omega^{*}$ (black rectangles) overlaid.}
	\label{fig:prediction}
\end{figure}
Note that due to the small amount of data used for solving the inverse 
problem, there is considerable variation in realizations of the 
concentration field.

\subsubsection{Optimal designs and uncertainty}
To compute $\gq$-optimal designs, we need to minimize the 
discretized goal-oriented criterion~\eqref{eq:goed_discretized}. 
The definition of this criterion
requires the first and second order derivatives of the goal-functional, 
as well as an
expansion point.  We provide the derivation of the gradient and Hessian of $\Q$
for the present example, 
in a function space setting, in Appendix~\ref{appdx:derivatives}.  As for the 
expansion point, we experiment with using the prior
mean as well as prior samples for $\mbar$.  The numerical tests that follow include testing
the effectiveness of the $\gq$-optimal designs compared to A-optimal ones, as well as
comparisons of designs obtained by minimizing the $\gl$-optimality criterion.
As before, we utilize the spectral method, outlined in
Section~\ref{sec:goed_spectral}, to estimate both classical and goal-oriented criteria. 
\begin{figure}[ht]
	\centering
	\includegraphics[width=\textwidth]{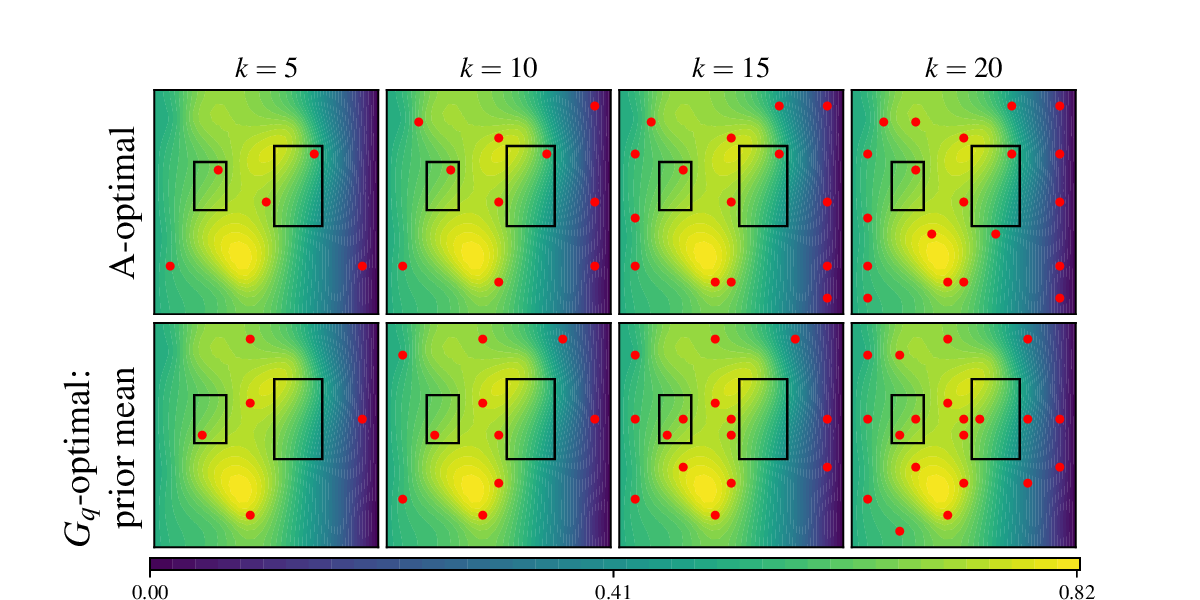}
	\caption{A-optimal and $\gq$-optimal designs of size $k \in \{5, 10, 15, 20\}$ plotted over the true pressure field $p(\mtrue)$.}
	\label{fig:classic_quadratic_designs}
\end{figure}

Figure~\ref{fig:classic_quadratic_designs} compares A-optimal and $\gq$-optimal
designs. Here, we use $\mpr$ as the expansion point to form the $\gq$-optimality criterion. 
Note that, unlike the study in Section~\ref{sec:numerics_quad}, the sensors
corresponding to the goal-oriented designs do not necessarily accumulate around
$\Omega^{*}$.  This indicates the non-trivial nature of such 
sensor placements and pitfalls of following an intuitive approach of placing
sensors within $\Omega^*$. 

Next, we examine the effectiveness of the $\gq$-optimal designs in comparison to
A-optimal ones. 
We use a prior sample as expansion point for the $\gq$-optimality criterion.
Figure~\ref{fig:violin2} presents a pairwise comparison of the goal-functional 
densities obtained by solving the Bayesian inverse problem with goal-oriented and 
classical design of various sizes. 
\begin{figure}[ht]
	\centering
	\includegraphics[width=\textwidth]{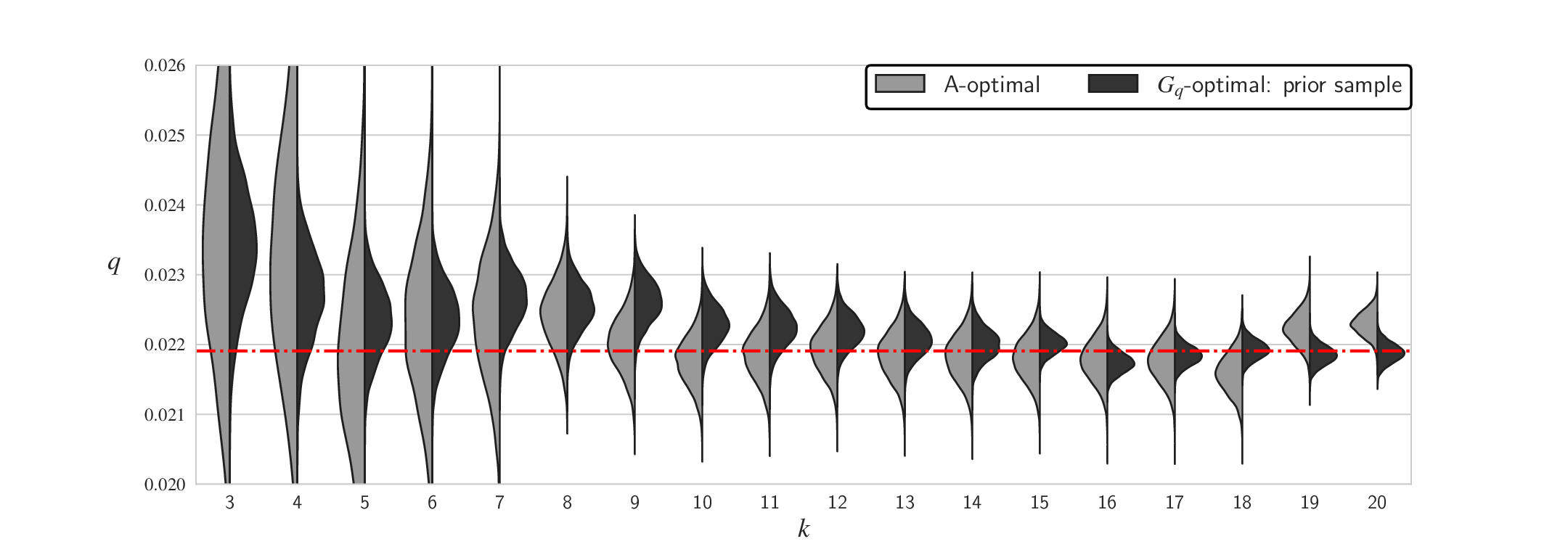}
	\caption{Goal-densities for A-optimal and $\gq$-optimal designs of size $k\in\{3,4,\dots,20\}$. The dashed line is $q(\vec \mtrue)$.}
	\label{fig:violin2}
\end{figure}
We observe that the densities corresponding to goal-oriented $\gq$-optimal designs have a
smaller spread and tend to be closer to the true goal value in comparison to
densities obtained using classical A-optimal designs.  This provides further
evidence that the proposed framework is more effective than the classical
approach in reducing the uncertainty in the goal-functional. 

\subsubsection{Comparing goal-oriented OED approaches} 
To gain insight on the $\gq$-optimality and $\gl$-optimality approaches, we report designs corresponding to these schemes in
Figure~\ref{fig:classic_goal_designs3}. Both goal-oriented criteria are built using a prior sample for expansion point.  
\begin{figure}[ht]
	\centering
	\includegraphics[width=\textwidth]{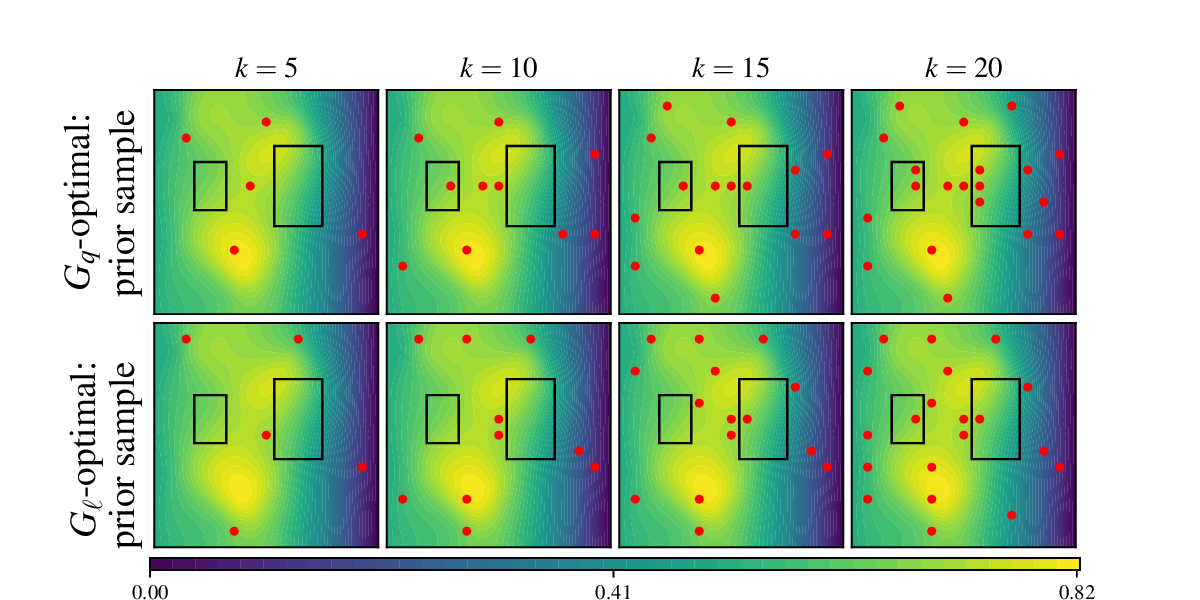}
	\caption{Designs of size $k \in \{5, 10, 15, 20\}$ corresponding to the $\gq$-optimality and $\gl$-optimality approaches plotted 
	over the true pressure field $p(\mtrue)$.}
	\label{fig:classic_goal_designs3}
\end{figure}
We note that the $\gq$-optimal and $\gl$-optimal
designs behave similarly. This is most evident for the optimal designs of size
$k=5$. To provide a quantitative comparison of these two sensor placement
strategies,  we report goal-functional densities corresponding to $\gl$-optimal and
$\gq$-optimal designs in Figure~\ref{fig:violin3}.  We use the same prior sample 
as the expansion point. Overall, we note that the
$\gq$-optimal designs are more effective in reducing the uncertainty in the
goal-functional compared to $\gl$-optimal designs. 
\begin{figure}[ht]
	\centering
	\includegraphics[width=.9\textwidth]{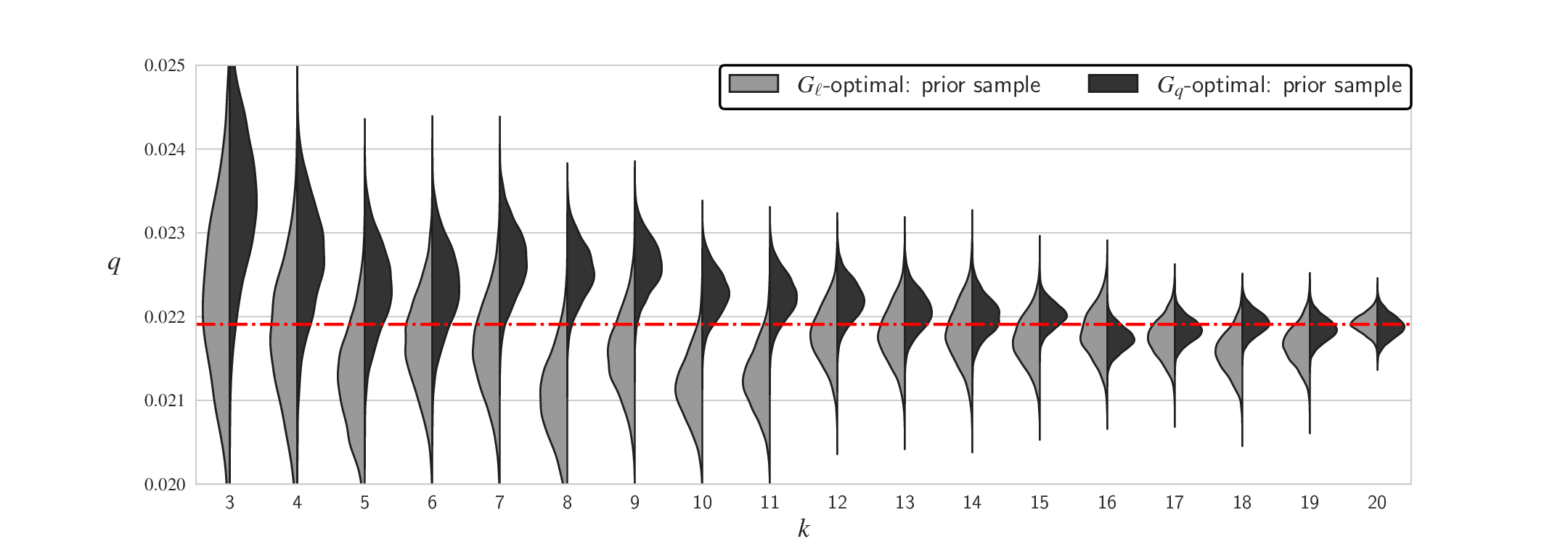}
	\caption{Goal-densities corresponding to the $\gl$-optimal and $\gq$-optimal designs of size $k \in \{1,2,\dots,20\}$, using the prior mean as expansion point. The dashed line is the true goal value.}
	\label{fig:violin3}
\end{figure}

So far, our numerical tests correspond to comparisons with a single expansion point, being either the 
prior mean or a sample from the prior. 
To understand how results vary for different expansion points, we
conduct a  
numerical experiment with multiple 
expansion points. The set of expansion points used in the following demonstration consists of the prior mean and prior samples.  
This study enables
a thorough comparison of the proposed $\gq$-optimality framework against the
classical A-optimal and goal-oriented $\gl$-optimality approaches.

\begin{figure}[ht]
	\centering
	\includegraphics[width=\textwidth]{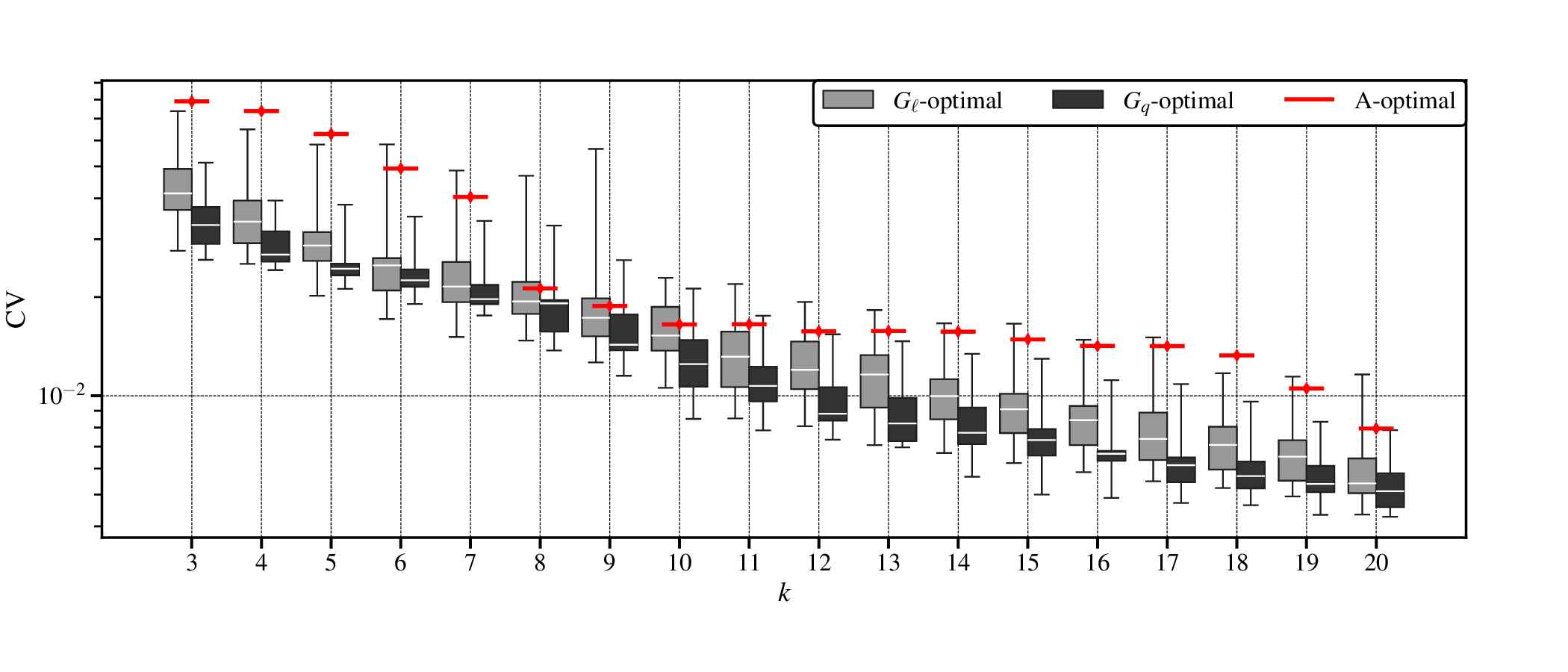}
	\caption{Coefficients of variation (CV) corresponding to A-optimal, $\gq$-optimal, and $\gl$-optimal designs of size $k \in \{3, 4, \dots, 20\}$. Goal oriented designs are obtained using the prior mean and $20$ prior samples as expansion points.}
	\label{fig:nonlinear_CV}
\end{figure}
We use the prior mean and $20$ prior samples as expansion points.  These $21$
points are used to form $21$ $\gl$-optimality and 21 $\gq$-optimality criteria.  For
each of the 21 expansion points, 
we obtain $\gl$-optimal and $\gq$-optimal optimal designs of size $k \in
\{3,4,\cdots,20\}$. This results in 42 posterior distributions corresponding 
to each of the considered values of $k$. 
To compare the performance of the two goal-oriented approaches, we 
consider a normalized notion of the posterior uncertainty 
in the goal-functional in each case. 
Specifically, we consider coefficient of variation (CV) of the goal-functional:
\[
CV(\q) \defeq \frac{\sqrt{\mathbb{V}\{\q\}}}{\mathbb{E}\{\q\}},	
\]
where $\mathbb{V}$ and $\mathbb{E}$ indicate variance and expectation 
with respect to the posterior distribution. We estimate the CV empirically.

For each $k \in \{3, \ldots, 20\}$, we obtain 21 CV values for the
goal-functional corresponding to $\gl$-optimal designs and 21 CV values for the
goal-functional corresponding to $\gq$-optimal designs.  We also compute the classical
A-optimal design for each $k$.  The results are summarized in
Figure~\ref{fig:nonlinear_CV}. For each $k$, we report the CV corresponding to
the A-optimal design of size $k$ and pairwise box plots depicting the distribution of the
CVs for the computed goal-oriented designs of size $k$. 
It is clear from Figure~\ref{fig:nonlinear_CV} that, on average, both goal-oriented designs produce smaller CVs than the classical approach. 
Furthermore, $\gq$-optimal designs reduce CV the most.
Additionally, we notice that choice of expansion point matters significantly for the goal-oriented schemes,
especially for the $\gl$-optimal designs. This is highlighted by considering the
$k=9$ case, where there is a high variance in the CVs.  To illustrate this
further, we isolate a subset of the design sizes and report the
statistical outliers in the CV data in addition to the box plots; 
see Figure~\ref{fig:nonlinear_CV_sample}.
\begin{figure}[ht]
	\centering
	\includegraphics[width=.9\textwidth]{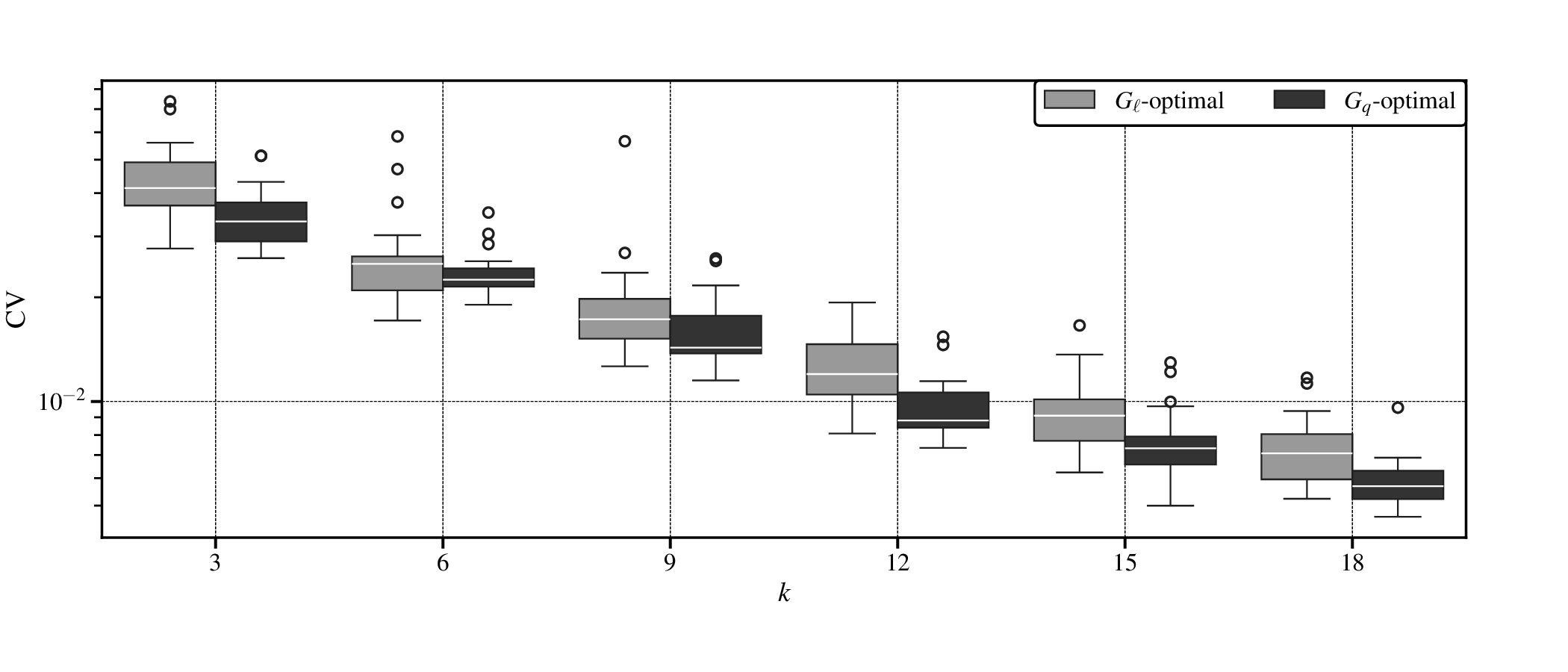}
	\caption{A subsample of the CV's corresponding to $\gl$-optimal and $\gq$-optimal designs, with outliers represented as circles.}
	\label{fig:nonlinear_CV_sample}
\end{figure}

Overall, the numerical tests paint a consistent picture: (i) both types of goal-oriented designs outperform the classical designs; (ii) compared to $\gl$-optimal
designs, the $\gq
$-optimal designs are more effective in reducing uncertainty in
the goal functional; and (iii) compared to the $\gq$-optimal designs, $\gl$-optimal
designs show greater sensitivity to the choice of the expansion point.

\section{Conclusion}
In the present work, we developed a mathematical and computational framework for
goal-oriented optimal of infinite-dimensional Bayesian linear inverse problems
governed by PDEs. The focus is on the case where the quantity of interest 
defining the goal is a nonlinear functional of the inversion parameter.  
Our framework is
based on minimizing the expected posterior variance of the quadratic
approximation to the goal-functional. We refer to this as the $\gq$-optimality criterion. 
We demonstrated that this strategy outperforms classical OED, as well as
c-optimal experimental design (which is based on linearization of the goal-functional), 
in reducing the uncertainty in the
goal-functional.  Additionally, the cost of our methods, measured in number of
PDEs solves, are independent of the dimension of the discretized
inversion parameter. 

Several avenues of interest for future investigations exist on both theoretical and
computational fronts. For one thing, it is natural to consider the case when the
inverse problem is nonlinear.
Clearly, the resulting methods would expand the application space of our goal-oriented
framework. A starting point for addressing goal-oriented optimal design 
of nonlinear inverse problems is to consider a linearization of the 
parameter-to-observable mapping, resulting in locally optimal goal-oriented 
designs. A related approach is to use a Laplace approximation to the posterior, 
as is common in optimal design of infinite-dimensional inverse problems.
Cases of inverse problems with potentially multi-modal designs might 
demand more costly strategies based on sampling. It would be interesting 
to investigate suitable importance sampling schemes in such contexts, for 
efficient evaluation of the $\gq$-optimality criterion. 

A complementary perspective on identifying measurements that are informative to
the goal-functional is a post-optimality sensitivity analysis approach. This
idea was used in~\cite{SunseriHartVanBloemenWaandersAlexanderian20} to identify
measurements that are most influential to the solution of a 
deterministic inverse
problem. Such ideas were extended to cases of Bayesian inverse problems
governed by PDEs
in~\cite{SunseriAlexanderianHartEtAl24,ChowdharyTongStadlerEtAl24}.  This
approach can also be used in a goal-oriented manner. Namely, one can consider
the sensitivity of measures of uncertainty in the goal-functional to different
measurements to identify informative experiments.  This is particularly
attractive in the case of nonlinear inverse problems governed by PDEs.

Another important line of inquiry is to investigate goal-oriented criteria defined in terms of
quantities other than the posterior variance. For example, one can seek  
designs that 
maximize information gain regarding the goal-functional or optimizing
inference of the tail behavior of the goal-functional.  A yet another potential
avenue of further investigations is considering relaxation strategies to replace
the binary goal-oriented optimization problem with a continuous optimization
problem, for which powerful gradient-based optimization methods maybe deployed.

\section*{Acknowledgments}
This article has been authored by employees of National Technology \&
Engineering Solutions of Sandia, LLC under Contract No.~DE-NA0003525 with the
U.S.~Department of Energy (DOE). The employees own all right, title and interest
in and to the article and are solely responsible for its contents.
SAND2024-15167O. 

This material is also based upon work supported by the U.S. Department of Energy,
Office of Science, Office of Advanced Scientific Computing Research Field Work
Proposal Number 23-02526.  

\bibliographystyle{abbrv}
\bibliography{refs}

\appendix

\section{Proof of Theorem~\ref{theorem:variance}}
\label{appdx:variance_of_quad}
We first recall some notations and definitions regarding 
Gaussian measures on Hilbert spaces.  
Recall that for a Gaussian measure $\mu = \mf{N}(m_0, \iC)$
on a real separable Hilbert space $\ms{M}$ the mean $m_0 \in \ms{M}$ satisfies, 
\[
    \ip{m_0, v} = \int_\ms{M} \ip{s, v} \mu(ds), 
	\quad \text{for all } v \in \ms{M}.
\]
Moreover, $\iC$ is a positive self-adjoint trace 
class operator that satisfies, 
\begin{equation}\label{equ:covariance_definition}
\ip{\iC u, v} = \int_\ms{M} \ip{u, s - m_0}\ip{v, s - m_0} \, \mu(ds).
\end{equation}
For further details, see~\cite[Section 1.4]{DaPrato06}.
We assume that $\iC$ is strictly positive. 
In what follows, we let $\{ e_i\}_{i \in \mathbb{N}}$ 
be the complete orthonormal set of eigenvectors of 
$\iC$ and $\{\lambda_i\}_{i \in \mathbb{N}}$ 
the corresponding (real and positive) eigenvalues.

Consider the probability space $(\ms{M}, \ms{B}(\ms{M}), \mu)$, 
where $\ms{B}$ is the Borel $\upsigma$-algebra on $\ms{M}$.
For a fixed $v \in \ms{M}$, the linear functional 
\[
\varphi(s) = \ip{s, v}, \quad s \in \ms{M},
\]
considered as a random variable $\varphi:(\ms{M}, \ms{B}(\ms{M}), \mu)
\to (\mathbb{R}, \ms{B}(\mathbb{R}))$, is a Gaussian random variable 
with mean $\varphi(m_0)$ and variance $\sigma^2_v = \ip{\iC v, v}$.
More generally, for $v_1, \ldots, v_n$ in $\ms{M}$, the random $n$-vector 
$\vec{Y}:\ms{M} \to \R^n$ given by 
$\vec{Y}(s) = [\ip{s, v_1}\; \ip{s, v_2}\; \ldots\; \ip{s, v_n}]^\top$ 
is an $n$-variate Gaussian whose distribution law is
$\mf{N}(\bar{\vec{y}}, \mat{C})$, 
\begin{equation}\label{equ:Y_rv}
\bar{y}_i = \ip{m_0, v_i}, \quad 
C_{ij} = \ip{\iC v_i, v_j},
\quad i, j \in \{1, \ldots, n\}.
\end{equation}

The arguments in this appendix rely heavily on the standard 
approach of using finite-dimensional projections to facilitate 
computation of Gaussian integrals. As such we also need 
some basic background results regarding Gaussian random vectors.
In particular, we need the following result~\cite{Withers85,Holmquist88,Triantafyllopoulos02}. 
\begin{lemma}\label{lemma:moments_basic}
Suppose $\vec{Y} \sim \mf{N}(\vec{0}, \mat{C})$ is an $n$-variate Gaussian random variable. Then,
for $i, j, k$, and $\ell$ in $\{1, \ldots, n\}$,
\begin{enumerate}[(a)]
\item $\mE\{Y_i Y_j Y_k\} = 0$; and  
\item $\mE\{Y_i Y_j Y_k Y_\ell\} = C_{ij} C_{k\ell} + C_{ik} C_{j\ell} + C_{i \ell}C_{jk}$.
\end{enumerate}
\end{lemma}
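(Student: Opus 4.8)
The plan is to derive both identities from the moment generating function (MGF) of $\vec Y$, which for a centered Gaussian is the everywhere-finite, entire function
$M(\vec t) = \mE\crbra{\exp(\vec t^\top \vec Y)} = \exp\!\paren{\half \vec t^\top \mat C \vec t}$, $\vec t \in \R^n$. Because $M$ is analytic, mixed moments are recovered by differentiation at the origin, $\mE\crbra{Y_{i_1}\cdots Y_{i_p}} = \partial_{t_{i_1}}\cdots\partial_{t_{i_p}} M(\vec t)\big|_{\vec t = \vec 0}$, and differentiation under the expectation is justified by the smoothness and finiteness of $M$. Writing $\phi(\vec t) = \half\sum_{a,b} C_{ab} t_a t_b$, the structural facts that drive everything are: $\partial_a \phi = \sum_b C_{ab} t_b$ vanishes at $\vec t = \vec 0$; $\partial_a\partial_b \phi = C_{ab}$ is constant; and all third- and higher-order derivatives of $\phi$ vanish identically. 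These reduce the proof to elementary bookkeeping.

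For part (a) I would first record the quickest route: since $\mf{N}(\vec 0, \mat C)$ is invariant under $\vec Y \mapsto -\vec Y$, every odd-order mixed moment equals its own negative and hence vanishes, so $\mE\crbra{Y_i Y_j Y_k} = 0$ at once. Equivalently, differentiating $M = e^\phi$ three times produces only terms each carrying at least one factor $\partial_a\phi$, which is zero at the origin; hence the third derivative vanishes at $\vec t = \vec 0$.

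For part (b) I would differentiate $M = e^\phi$ four times and evaluate at $\vec t = \vec 0$. Repeated use of the product rule yields, at each stage, a sum of terms that are products of second derivatives $\partial_a\partial_b\phi = C_{ab}$ and first derivatives $\partial_a\phi$, all times $e^\phi$. Setting $\vec t = \vec 0$ kills every term that still contains an unpaired $\partial_a\phi$, and leaves $e^{\phi(\vec 0)} = 1$; the survivors are exactly those in which all four indices have been closed into $C$-factors. These correspond to the three ways of partitioning $\{i,j,k,\ell\}$ into two pairs, giving $C_{ij}C_{k\ell} + C_{ik}C_{j\ell} + C_{i\ell}C_{jk}$.

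The only real obstacle is the combinatorial bookkeeping in the fourth-order product rule — carefully separating the terms that retain an unpaired $\partial_a\phi$ (which drop out at the origin) from those that close into a complete pairing. A cleaner alternative that avoids the explicit differentiation is Gaussian integration by parts (Stein's identity), $\mE\crbra{Y_i\, g(\vec Y)} = \sum_a C_{ia}\,\mE\crbra{\partial_a g(\vec Y)}$: taking $g(\vec Y) = Y_j Y_k Y_\ell$ and using $\mE\crbra{Y_a Y_b} = C_{ab}$ recovers (b) in a single step, while $g(\vec Y) = Y_j Y_k$ together with the vanishing of first moments recovers (a). Both routes are elementary; the symmetry argument is the most economical for (a), and either the MGF computation or Stein's identity settles (b).
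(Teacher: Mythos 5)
Your proposal is correct. Note, however, that the paper does not prove this lemma at all: it treats parts (a) and (b) as known facts about Gaussian moment formulas (the third- and fourth-order cases of Isserlis'/Wick's theorem) and simply cites the literature. So there is no in-paper argument to compare against; what you have done is supply the missing elementary derivation. All three routes you sketch are sound: the sign-flip symmetry $\vec Y \mapsto -\vec Y$ disposes of (a) immediately (odd absolute moments of a Gaussian are finite, so the moment is well defined and equals its own negative); the MGF computation for (b) works because $M(\vec t)=\exp\bigl(\tfrac12 \vec t^\top \mat C\, \vec t\bigr)$ is entire with $\partial_a\phi(\vec 0)=0$, $\partial_a\partial_b\phi = C_{ab}$ constant, and all higher derivatives of $\phi$ vanishing, so the only surviving terms at the origin are the three pair partitions of $\{i,j,k,\ell\}$; and Stein's identity with $g(\vec Y)=Y_jY_kY_\ell$ gives (b) in one line once $\mE\{Y_aY_b\}=C_{ab}$ is known. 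One small caveat worth a sentence if you write this up: the density-based form of Gaussian integration by parts assumes $\mat C$ nonsingular, whereas the lemma as stated allows degenerate $\mat C$; the MGF route (or a limiting argument $\mat C + \epsilon \mat I \to \mat C$) covers the general case. This does not affect the paper's use of the lemma, since there it is applied with $C_{ij}=\lambda_i\delta_{ij}$ and strictly positive $\lambda_i$.
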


The following technical result is useful in what follows.

\begin{lemma}
\label{lemma:moments1}
Let $\iA$ be a bounded selfadjoint linear operator on a Hilbert space $\ms{M}$, 
and let  $\mu_{0} := \mf{N}(0, \iC)$ be a Gaussian measure on $\ms{M}$.
We have
\begin{enumerate}[(a),itemsep=0.3cm]
\item $\int_{\ms{M}}\ip{b,s}\ip{c,s} \, \mu_0(ds)= \ip{\iC b, c}$, for all $b$ and $c$ in 
$\ms{M}$;
\item $\int_{\ms{M}} \ip{\iA s, s}\ip{b, s} \, \mu_0(ds) = 0$, for all $b \in \ms{M}$; and 
\item $\int_{\ms{M}} \ip{\iA s, s}^{2} \, \mu_0(ds) = \big(\tr(\iC\iA)\big)^{2} + 2\tr\big((\iC\iA)^{2}\big)$.
\end{enumerate}
\end{lemma}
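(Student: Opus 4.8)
The plan is to reduce all three integrals to computations with the scalar coordinates $Y_i(s) \defeq \ip{s, e_i}$, $i \in \mathbb{N}$, which under $\mu_0$ form a sequence of jointly Gaussian variables with mean zero and $\mE\{Y_i Y_j\} = \ip{\iC e_i, e_j} = \lambda_i \delta_{ij}$; this follows from \eqref{equ:Y_rv} specialized to $v_i = e_i$ and $m_0 = 0$. Part (a) then demands no computation: it is precisely the defining property \eqref{equ:covariance_definition} of the covariance operator for the centered measure $\mu_0$, so I would simply invoke that identity.

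For parts (b) and (c), I would record the a.s.\ expansion $\ip{\iA s, s} = \sum_{i,j} \Ae{i}{j}\, Y_i(s) Y_j(s)$, which is valid because $s = \sum_i Y_i(s) e_i$ converges in $\ms{M}$ and $\iA$ is bounded and selfadjoint. Writing $b_k \defeq \ip{b, e_k}$, the integrand in (b) is $\sum_{i,j,k} \Ae{i}{j} b_k\, Y_i Y_j Y_k$, and every third moment $\mE\{Y_i Y_j Y_k\}$ vanishes by Lemma~\ref{lemma:moments_basic}(a), giving zero. For (c), the integrand is $\sum_{i,j,k,\ell} \Ae{i}{j}\Ae{k}{\ell}\, Y_i Y_j Y_k Y_\ell$, and Lemma~\ref{lemma:moments_basic}(b) replaces each fourth moment by the Wick sum $C_{ij}C_{k\ell} + C_{ik}C_{j\ell} + C_{i\ell}C_{jk}$.

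What remains is index bookkeeping using $C_{ij} = \lambda_i \delta_{ij}$ together with $\ip{\iC\iA e_j, e_i} = \lambda_i \Ae{i}{j}$ (a consequence of selfadjointness of $\iA$ and $\iC e_i = \lambda_i e_i$). The first Wick term factorizes into $\big(\sum_i \lambda_i \Ae{i}{i}\big)^2 = (\tr(\iC\iA))^2$, while the second and third each collapse to $\sum_{i,j} \lambda_i \lambda_j \Ae{i}{j}^2 = \tr((\iC\iA)^2)$, producing the asserted formula.

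The genuine obstacle is rigor rather than algebra: legitimizing the interchange of the infinite sums with the integral. I would secure this through finite-dimensional truncation, following the projection strategy announced in this appendix. Letting $P_n$ denote the orthogonal projection onto $\mathrm{span}\{e_1, \ldots, e_n\}$ and $\iA_n \defeq P_n \iA P_n$, the integrals for $\ip{\iA_n s, s}$ involve only the finite centered Gaussian vector $(Y_1, \ldots, Y_n)$, to which Lemma~\ref{lemma:moments_basic} applies directly and yields the identities exactly at each $n$. I would then let $n \to \infty$: since $\ip{\iA_n s, s} \to \ip{\iA s, s}$ pointwise with $|\ip{\iA_n s, s}| \le \|\iA\|\,\|s\|^2$, and $\iC$ trace class guarantees $\int_{\ms{M}} \|s\|^4\, \mu_0(ds) < \infty$, dominated convergence applies; the same trace-class property ensures the limiting traces $\tr(\iC\iA)$ and $\tr((\iC\iA)^2)$ are finite and match the $n \to \infty$ limits of their truncated counterparts. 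This truncation-and-limit step is the one needing care, after which the identities follow from the elementary manipulations above.
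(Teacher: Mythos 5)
Your proposal is correct and follows essentially the same route as the paper's proof: part (a) by direct appeal to \eqref{equ:covariance_definition}, and parts (b) and (c) by projecting onto $\mathrm{span}\{e_1,\ldots,e_n\}$ (your $\iA_n = P_n\iA P_n$ gives exactly the paper's $\tau_n$ and $\theta_n$), applying Lemma~\ref{lemma:moments_basic} to the resulting centered Gaussian vector, carrying out the same Wick-term bookkeeping with $C_{ij}=\lambda_i\delta_{ij}$, and passing to the limit via dominated convergence using $\int\|s\|^3\,\mu_0(ds)<\infty$ and $\int\|s\|^4\,\mu_0(ds)<\infty$. No substantive differences.
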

\begin{proof}
The first statement follows immediately from~\eqref{equ:covariance_definition}.
We consider (b) next. 
For $n \in \mb{N}$, we define the projector $\pi_{n}$ in terms of the eigenvectors of $\iC$, 
$$\pi_{n}(s) := \sum_{i=1}^{n}\ip{s, e_{i}}e_{i}, \quad s \in \ms{M}.$$
Note that $\vec{Y}:(\ms{M}, \ms{B}(\ms{M}), \mu_0) \to (\R^n, \ms{B}(\R^n))$ 
defined by  
$\vec{Y}(s) \defeq 
[\ip{s, e_1}\; \ip{s, e_2}\; \ldots\; \ip{s, e_n}]^\top$ has an $n$-variate 
Gaussian law, $\vec{Y} \sim \mf{N}(\vec{0}, \mat{C})$, with 
\[
C_{ij} = \ip{\iC e_i, e_j} = \lambda_{i}\delta_{ij},
\]
where $\delta_{ij}$ is the Kronecker delta. 
We next consider,
$$\ip{\iA s, s}\ip{b, s} = \limn{n}\tau_{n}(s) \quad \text{where} \quad \tau_{n}(s) \defeq \ip{\iA \pi_{n}(s), \pi_{n}(s)}\ip{b, \pi_{n}(s)}.$$
Note that for each $n \in \mathbb{N}$, 
\[
\int_{\ms{M}} \tau_n(s)\, \mu_0(ds) 
= \sum_{i,j,k=1}^n \ip{\iA e_i, e_j} \ip{b, e_k} \int_{\ms{M}} \ip{s,e_i}\ip{s,e_j}\ip{s,e_k} \, \mu_0(ds) 
= \sum_{i,j,k=1}^n \ip{\iA e_i, e_j} \mathbb{E}\{ Y_i Y_j Y_k\} = 0.
\]
The last step follows from Lemma~\ref{lemma:moments_basic}(a).
Furthermore, 
$|\tau_{n}(s)| \leq \|\iA\|\|b\|\|\pi_{n}(s)\|^{3} \leq \|\iA\|\|b\|\|s\|^{3}$, for all $n \in \mN$.
Therefore, since $\int_{\ms{M}} \| s \|^3 \, \mu_0(ds) < \infty$, by 
the Dominated Convergence Theorem,
\[
\int_{\ms{M}} \ip{\iA s,s}\ip{b, s}   \, \mu_0(ds) 
= \int_{\ms{M}} \limn{n} \tau_{n}(s) \,\mu_0(ds) 
= \limn{n} \int_{\ms{M}} \tau_n(s)   \, \mu_0(ds) 
= 0.
\]
We next consider the third statement of the lemma. The approach is similar to the 
proof of part (b). We note that  
$$\ip{\iA s, s}^{2} = \limn{n} \theta_{n}(s) \quad \text{where} \quad \theta_{n}(s):= \ip{\iA \pi_{n}(s), \pi_{n}(s)}^{2}.$$
As in the case of $\tau_n$ above, we can easily bound $\theta_n$. Specifically, 
$|\theta_{n}(s)| \leq \|\iA\|^{2}\|s\|^{4}$, for all $n \in \mathbb{N}$.
Note also that $\int_{\ms{M}} \| s\|^4 \, \mu_0(ds) < \infty$.
Therefore, by the Dominated Convergence Theorem,
\begin{equation}\label{equ:limit_DCT_theta}
\int_{\ms{M}}\ip{\iA s, s}^{2} \, \mu_0(ds) = \int_{\ms{M}} \limn{n} \theta_{n}(s) \,\mu_0(ds) = \limn{n} \int_{\ms{M}} \theta_{n}(s) \,\mu_0(ds).
\end{equation}
Next, note that for each $n$, 
\begin{align}
\int_{\ms{M}} \theta_{n}(s) \,\mu_0(ds)  
&= 
\summ{i}{j}{k}{\ell} 
	\Ae{i}{j} \Ae{k}{\ell}\int_{\ms{M}}\ip{s,e_i}\ip{s,e_j}\ip{s,e_k}\ip{s,e_\ell} \, \mu_0(ds) \notag\\
&= 
	\summ{i}{j}{k}{\ell} \Ae{i}{j} \Ae{k}{\ell}(C_{ij} C_{k\ell} + C_{ik} C_{j\ell} + C_{i \ell}C_{jk}), 
	\label{equ:ugly}
\end{align}
where we have used Lemma~\ref{lemma:moments_basic}(b) in the final step.
Let us consider each of the three terms in~\eqref{equ:ugly}. We note,
\begin{align}
\sum_{i,j,k,\ell=1}^{n}\ip{\iA e_{i}, e_{j}}\ip{\iA e_{k}, e_{\ell}} C_{ij}C_{k\ell} 
&= \sum_{i,j,k,\ell}^{n}\lam_{j}\lam_{k}\ip{\iA e_{i}, e_{j}}\ip{\iA e_{k}, e_{\ell}}\delta_{ij}\delta_{k\ell} \notag\\ 
\notag &= \sum_{i,k=1}^{n}\lam_{i}\lam_{k}\ip{\iA e_{i}, e_{i}}\ip{\iA e_{k}, e_{k}}\\
&= \paren{\sum_{i=1}^{n}\lam_{i}\ip{\iA e_{i}, e_{i}}}^{2}\notag\\ 
&= \paren{\sum_{i=1}^{n}\ip{\iA e_{i,}, \iC e_{i}}}^{2} \to \big(\tr(\iC \iA)\big)^2, 
\quad\text{as } n \to \infty. \label{equ:ugly_a}
\end{align}
Before, we consider the second and third terms in~\eqref{equ:ugly}, we note 
\begin{multline*}
	\tr\big((\iA\iC)^2\big) = \tr(\iA\iC\iA\iC) = \sum_{i=1}^\infty \ip{e_i, \iA \iC \iA \iC e_i} = 
	\sum_{i=1}^\infty \lambda_i \ip{e_i, \iA \iC \iA e_i}
	= \sum_{i=1}^\infty \lambda_i \ip{e_i, \iA \iC \big(\sum_{j=1}^\infty \ip{\iA e_i, e_j}e_j\big)}\\
	= \sum_{i,j=1}^\infty \lambda_i \ip{e_i, \iA \iC  e_j} \ip{\iA e_i, e_j}
	= \sum_{i,j=1}^\infty \lambda_i\lambda_j \ip{\iA e_i, e_j}^2.
 \end{multline*}
Next, we note 
\begin{equation}\label{equ:ugly_b}
\sum_{i,j,k,\ell=1}^{n}\ip{\iA e_{i}, e_{j}}\ip{\iA e_{k}, e_{\ell}}C_{ik}C_{j\ell} 
= \sum_{i,j,k,\ell=1}^{n} \lam_{i}\lam_{j}\ip{\iA e_{i}, e_{j}}\ip{\iA e_{k}, e_{\ell}}\delta_{ik}\delta_{j\ell}
= \sum_{i,j=1}^{n} \lam_{i}\lam_{j}\ip{\iA e_{i}, e_{j}}^{2} \to \tr\big((\iA\iC)^2\big), 
\end{equation}
as $n \to \infty$. A similar argument shows, 
\begin{equation}\label{equ:ugly_c}
\sum_{i,j,k,\ell=1}^{n}\ip{\iA e_i, e_j}\ip{\iA e_k, e_\ell} C_{i\ell}C_{jk} 
= \sum_{i,j=1}^{n} \lam_{i}\lam_{j}\ip{\iA e_{i}, e_{j}}^{2} \to \tr\big((\iA\iC)^2\big),
\quad\text{as } n \to \infty.
\end{equation}
Hence, combining \eqref{equ:limit_DCT_theta}--\eqref{equ:ugly_c}, we obtain
\[
\int_{\ms{M}}\ip{\iA s, s}^{2} \, \mu_0(ds) = \int_{\ms{M}} \limn{n} \theta_{n}(s) \,\mu_0(ds) = \limn{n} \int_{\ms{M}} \theta_{n}(s) \,\mu_0(ds)
= \big( \tr(\iC\iA) \big)^2 + 2 \tr\big((\iC\iA)^2\big), 
\]
which completes the proof of the lemma.
\qed
\end{proof}

The following result extends Lemma~\ref{lemma:moments1} to the 
case of non-centered Gaussian measures. 
\begin{lemma}
\label{lemma:moments2}
Let $\iA$ be a bounded selfadjoint linear operator on a real Hilbert space $\ms{M}$, 
and let $\mu := \mf{N}(m_0, \iC)$ be a Gaussian measure on $\ms{M}$.
\begin{enumerate}[(a),itemsep=0.3cm]
\item 
$\int_\ms{M} \ip{b,s}\ip{c,s} \, \mu(ds) = \ip{\iC b, c} +\ip{b, m_{0}}\ip{c,m_{0}}$, 
for all $b$ and $c$ in $\ms{M}$;
\item 
$\int_\ms{M} \ip{\iA s, s}\ip{b, s} \, \mu(ds) = 
\big(\!\ip{\iA m_{0}, m_{0}} + \tr(\iC\iA) \big) \ip{b, m_0} + 2\ip{\iC\iA m_{0}, b}$ 
for all $b \in \ms{M}$; and
\item
$\int_\ms{M} \ip{\iA s, s}^{2} \, \mu(ds) = \big(\tr(\iC\iA)\big)^{2} + 2\tr\big((\iC\iA)^{2}\big) + 4 \ip{\iC\iA m_{0}, \iA m_{0}}
+ \paren{\ip{\iA m_{0}, m_{0}} + 2\tr\paren{\iC\iA}}\ip{\iA m_{0},m_{0}}$.
\end{enumerate}
\end{lemma}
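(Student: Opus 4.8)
The plan is to reduce everything to the centered case already handled in Lemma~\ref{lemma:moments1} by exploiting the translation structure of Gaussian measures. Since $\mu = \mf{N}(m_0, \iC)$ is the pushforward of $\mu_0 := \mf{N}(0, \iC)$ under the shift $t \mapsto m_0 + t$ (the two measures share the same characteristic functional), every integral against $\mu$ can be rewritten as $\int_\ms{M} f(m_0 + t)\,\mu_0(dt)$. I would therefore substitute $s = m_0 + t$ throughout, expand the integrand into a sum of terms that are constant, linear, or quadratic in $t$, and integrate each piece using Lemma~\ref{lemma:moments1} together with the elementary facts that linear functionals have zero $\mu_0$-mean and that $\int_\ms{M}\ip{\iA t, t}\,\mu_0(dt) = \tr(\iC\iA)$.

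For part (a), writing $\ip{b, s} = \ip{b, m_0} + \ip{b, t}$ and similarly for $c$ and multiplying out, the two cross terms vanish because $t$ has zero mean, the constant term contributes $\ip{b, m_0}\ip{c, m_0}$, and the remaining term $\int_\ms{M} \ip{b, t}\ip{c, t}\,\mu_0(dt)$ equals $\ip{\iC b, c}$ by Lemma~\ref{lemma:moments1}(a). For part (b), I would first use selfadjointness of $\iA$ to write $\ip{\iA s, s} = \ip{\iA m_0, m_0} + 2\ip{\iA m_0, t} + \ip{\iA t, t}$, then multiply by $\ip{b, m_0} + \ip{b, t}$. Integrating term by term, the odd-in-$t$ pieces vanish (the linear-times-constant terms by zero mean, the quadratic-times-linear term by Lemma~\ref{lemma:moments1}(b)); the surviving contributions are $\ip{\iA m_0, m_0}\ip{b, m_0}$, the term $\ip{b, m_0}\int_\ms{M}\ip{\iA t, t}\,\mu_0(dt) = \tr(\iC\iA)\ip{b, m_0}$, and $2\int_\ms{M}\ip{\iA m_0, t}\ip{b, t}\,\mu_0(dt) = 2\ip{\iC\iA m_0, b}$ via Lemma~\ref{lemma:moments1}(a), which assemble into the claimed expression.

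Part (c) is the same idea applied to the square of the decomposition $\ip{\iA s, s} = \ip{\iA m_0, m_0} + 2\ip{\iA m_0, t} + \ip{\iA t, t}$. Expanding the square yields six products; integrating them, the constant-times-linear and linear-times-quadratic cross terms vanish (the latter by Lemma~\ref{lemma:moments1}(b) with $b = \iA m_0$), while the others are evaluated using $\int_\ms{M}\ip{\iA m_0, t}^2\,\mu_0(dt) = \ip{\iC\iA m_0, \iA m_0}$ from Lemma~\ref{lemma:moments1}(a), $\int_\ms{M}\ip{\iA t, t}^2\,\mu_0(dt) = \paren{\tr(\iC\iA)}^2 + 2\tr\big((\iC\iA)^2\big)$ from Lemma~\ref{lemma:moments1}(c), and the first-moment identity for the constant-times-quadratic term. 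Collecting terms and recognizing $\ip{\iA m_0, m_0}^2 + 2\tr(\iC\iA)\ip{\iA m_0, m_0} = \paren{\ip{\iA m_0, m_0} + 2\tr(\iC\iA)}\ip{\iA m_0, m_0}$ gives the stated formula.

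I expect no deep obstacle here; the argument is essentially bookkeeping. The main points requiring care are the combinatorics of the six-term expansion in (c) and the correct pairing of each product with the appropriate part of Lemma~\ref{lemma:moments1}. The only auxiliary fact not contained verbatim in Lemma~\ref{lemma:moments1} is the first-moment identity $\int_\ms{M}\ip{\iA t, t}\,\mu_0(dt) = \tr(\iC\iA)$; if it is not invoked from the cited quadratic-form formula, it is readily re-derived by the same finite-dimensional projection argument used in the proof of Lemma~\ref{lemma:moments1}, since $\int_\ms{M}\ip{t, e_i}\ip{t, e_j}\,\mu_0(dt) = \lambda_i\delta_{ij}$ yields $\sum_i \lambda_i\ip{\iA e_i, e_i} = \tr(\iC\iA)$.
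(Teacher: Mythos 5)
Your proposal is correct and follows essentially the same route as the paper: both reduce to the centered case of Lemma~\ref{lemma:moments1} via the translation by $m_0$, with the only cosmetic difference being that you substitute $s = m_0 + t$ and expand the target integral directly, whereas the paper expands $\int_{\ms{M}}\ip{\iA(s-m_0), s-m_0}^2\,\mu(ds)$ (whose value is known from the centered lemma) and solves backward for $\int_{\ms{M}}\ip{\iA s, s}^2\,\mu(ds)$. Your bookkeeping for part (c) checks out against the stated formula, and your acknowledgment that the first-moment identity $\int_{\ms{M}}\ip{\iA t, t}\,\mu_0(dt) = \tr(\iC\iA)$ must be supplied separately mirrors the paper's appeal to the expected value of a quadratic form.
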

\begin{proof}
These identities follow from Lemma~\ref{lemma:moments1} and some 
basic manipulations. For brevity, we only prove the third statement. The 
other two can be derived similarly. 
Using Lemma~\ref{lemma:moments1}(c),
\begin{align*}
\int_{\ms{M}} 
\ip{\iA (s-m_{0}), s-m_{0}}^{2} \, \mu(ds) 
	&= \big(\tr(\iC\iA)\big)^{2} + 2\tr\big((\iC\iA)^{2}\big)\\
	&= \int_{\ms{M}} \ip{\iA s, s}^{2} \, \mu(ds) 
		+ 4\int_{\ms{M}} \ip{\iA m_{0}, s}^{2} \, \mu(ds)
		+ \ip{\iA m_{0}, m_{0}}^{2}\\
		&\quad-4 \int_{\ms{M}} \ip{\iA s, s}\ip{\iA m_{0}, s} \, \mu(ds)
		-4 \int_{\ms{M}} \ip{\iA m_0, s}\ip{\iA m_{0}, m_{0}} \, \mu(ds)\\
		&\quad+ 2 \int_{\ms{M}} \ip{\iA s, s}\ip{\iA m_{0}, m_{0}} \, \mu(ds).
\end{align*}
Subsequently, we solve for $\int_{\ms{M}} \ip{\iA s, s}^{2} \, \mu(ds)$.
To do this, we require the 
formula for the expected value of a quadratic form on 
a Hilbert space (see~\cite[Lemma 1]{AlexanderianGhattasEtAl16}) and items (a) and (b) 
of the present lemma. Performing the calculation, we arrive at
\[
\int_{\ms{M}} \ip{\iA m, m}^{2} \, \mu(ds) = \big(\tr(\iC\iA)\big)^{2} + 2\tr\big((\iC\iA)^{2}\big) + 4 \ip{\iC\iA m_{0}, \iA m_{0}}
+ \paren{\ip{\iA m_{0}, m_{0}} + 2\tr\paren{\iC\iA}}\ip{\iA m_{0},m_{0}}.
\]
\qed
\end{proof}

We now have all the tools required to prove Theorem~\ref{theorem:variance}.
\begin{proof}[Proof of Theorem~\ref{theorem:variance}]
Consider~\eqref{equ:quadratic_functional_def}. 
Note that
\begin{equation}\label{eq:var_identity}
	\mV_{\mu}\crbra{\Q} = \mE_{\mu}\crbra{\Q^2} - \big(\mE_{\mu}\{\Q\}\big)^2.
\end{equation}
The second term of \eqref{eq:var_identity} 
is straightforward to compute. 
Specifically,
\begin{equation}\label{eq:2nd_exp}
\mE_{\mu}\{\Q\} =
\int_{\ms{M}} \Q(m) \, \mu(dm) 
= \frac12\int_{\ms{M}} \ip{\iA m, m} \mu(dm) + \int_{\ms{M}} \ip{b,m} \, \mu(dm)
= \frac{1}{2}\paren{\ip{\iA m_{0}, m_{0}} + \tr\paren{\iC\iA}} + \ip{b, m_{0}}.
\end{equation}
Computing the first term in~\eqref{eq:var_identity} is 
facilitated by  Lemma~\ref{lemma:moments2}. We note
\[
\begin{alignedat}{2}
\mE_{\mu}\{\Q^2\} &= \int_{\ms{M}} \Q(m)^2 \, \mu(dm) \\
&= 
\frac{1}{4}\int_{\ms{M}} \ip{\iA m, m}^{2} \, \mu(dm) 
+ \int_{\ms{M}} \ip{\iA m, m}\ip{b, m} \, \mu(dm)
+ \int_{\ms{M}} \ip{b,m}^{2} \, \mu(dm) \\
&= \frac{1}{4}\ip{\iA m_{0}, m_{0}}^{2} + \ip{\iA m_{0}, m_{0}}\ip{b,m_{0}} + \ip{\iC\iA m_{0}, \iA m_{0}}\\
&\quad+ 2\ip{\iC\iA m_{0}, b} + \ip{b, m_{0}}^{2} + \ip{\iC b, b}\\
&\quad+ \paren{\ip{\iA m_{0}, m_{0}} + \ip{b,m_{0}}}\tr\paren{\iC\iA}\\
&\quad+\frac{1}{4}\big(\tr(\iC\iA)\big)^{2} + \half\tr\big((\iC\iA)^{2}\big).
\end{alignedat}
\]
Substituting this and \eqref{eq:2nd_exp} into \eqref{eq:var_identity} and simplifying 
provides the desired identity for $\mathbb{V}_{\mu}\{\Q\}$.
\qed
\end{proof}

\section{Proof of Theorem~\ref{theorem:main}}
\label{sec:proof_main_thm}

\begin{proof}[Proof of Theorem~\ref{theorem:main}]
We begin with the following definitions
\begin{equation}\label{eq:Abc}
\iA \defeq \ddQ, \quad b \defeq \dQ - \ddQ\mbar, \quad 
c \defeq \Q(\mbar) - \ip{\dQ, \mbar} + \frac12\ip{\ddQ \mbar, \mbar}.
\end{equation}
These components enable expressing $\Q_{\qua}$ as 
\[
	\Q_{\qua}(m) = \frac{1}{2}\ip{\iA m, m} + \ip{b, m} + c.
\]
Note that the variance of $\Q_{\qua}$ does not depend on $c$. 
We can apply Theorem~\ref{theorem:variance} to obtain an expression for the variance of $\Q_{\qua}$ in relation to $\mu_{\po}$:
\begin{equation}\label{eq:moment1}
	\mV_{\mu_{\po}}\crbra{\Q_{\qua}} = \ip{\Cpo(\iA\mMAPy +b), \iA\mMAPy+b} + \half\tr\big((\Cpo\iA)^{2}\big).
\end{equation}
Next, we compute the remaining expectations in~\eqref{eq:psi}. However, this will require some manipulation of the formula for $\mMAPy$. 
We view the MAP point, given in~\eqref{eq:post}, as an affine transformation on data $\vec y$. Thus,
\begin{equation}\label{eq:affine}
	\mMAPy = \mapA \vec y + \mapB, \quad \text{where} \quad \mapA := \sig^{-2}\Cpo\iF^{*} \quad \text{and} \quad \mapB := \Cpo\Cpr^{-1}\mpr.
\end{equation}
Using this representation of $\mMAPy$, \eqref{eq:moment1} becomes
\begin{align*}
	\mV_{\mu_{\po}}\crbra{\Q_{\qua}} &= \ip{\Cpo\iA\mapA\vec 	y, \iA\mapA\vec y} + 2\ip{\Cpo\iA\mapA\vec y, \iA \mapB +b}
	\\
	&+ \ip{\Cpo(\iA\mapB + b), \iA\mapB+b} + \half\tr\big((\Cpo\iA)^{2}\big).
\end{align*}

Now the variance expression is in a form suitable for calculating the final
moments. Recalling the definition of the likelihood measure, we find that
\begin{align*}
	\mE_{{\vec y | m}} \big\{ \mV_{\mu_{\po}}\{\Q_{\qua}\}\big\} &= 		\ip{\Cpo\iA\mapA\iF m, \iA\mapA\iF m} + 2\ip{\Cpo\iA\mapA\iF m, \iA \mapB +b}\\
	&+ \ip{\Cpo(\iA\mapB + b), \iA\mapB+b} + \sig^{2}\tr\sqbra{\mapA^{*}\iA\Cpo\iA\mapA}\\
	&+ \half\tr\big((\Cpo\iA)^{2}\big).
\end{align*}
Computing the outer expectation with respect to the 
prior measure yields,
\begin{equation}\label{eq:moments}
\begin{alignedat}{1}
	\Psi &= \ip{\Cpo\iA\mapA\iF \mpr, \iA\mapA\iF \mpr} + 2\ip{\Cpo\iA\mapA\iF \mpr, \iA \mapB +b}\\
	&+ \ip{\Cpo(\iA\mapB + b), \iA\mapB+b} + \tr\sqbra{\Cpr\iF^{*}\mapA^{*}\iA\Cpo\iA\mapA\iF}\\
	&+ \sig^{2}\tr\sqbra{\mapA^{*}\iA\Cpo\iA\mapA} + \half\tr\big((\Cpo\iA)^{2}\big).
\end{alignedat}
\end{equation}

The remainder of the proof involves the acquisition of a meaningful representation of $\Psi$. Our first step towards simplification requires substituting the components $\mapA$ and $\mapB$ of $\mMAPy$, given by 
\eqref{eq:affine}, into \eqref{eq:moments}. We follow this by recognizing occurrences of $\iHmis$ in the resulting expression. Performing these operations, we have that
\begin{align}
	\Psi &= \ip{\Cpo\iA\Cpo\iHmis\mpr, \iA\Cpo\iHmis\mpr} 			  \tag{\ensuremath{A_{1}}}\\
	&+ 2\ip{\Cpo\iA\Cpo\iHmis\mpr, \iA\Cpo\Cpr^{-1}\mpr+b}			  \tag{\ensuremath{A_{2}}}\\
	&+ \ip{\Cpo\paren{\iA\Cpo\Cpr^{-1}\mpr+b},\iA\Cpo\Cpr^{-1}\mpr+b} \tag{\ensuremath{A_{3}}}\\
	&+ \tr\paren{\iHmis\Cpr\iHmis\Cpo\iA\Cpo\iA\Cpo}				  \tag{\ensuremath{B_{1}}}\\
	&+ \tr\paren{\iHmis\Cpo\iA\Cpo\iA\Cpo}							  \tag{\ensuremath{B_{2}}}\\
	&+ \frac12 \tr\big((\Cpo\iA)^{2}\big)								  \tag{\ensuremath{B_{3}}}.
\end{align}
To facilitate the derivations that follow, we have assigned a label to each of
the summands, in the above equation.  We refer to
$A_{1}, A_{2}$, and $A_{3}$ as \emph{product terms} and call $B_{1}$, $B_{2}$, and 
$B_{3}$ the \emph{trace terms}. 

Let us consider the product terms. Note that
\begin{align}
	A_{1} + A_{2} + A_{3} &= \ip{\Cpo\iA\Cpo\iHmis\mpr,\iA\Cpo\iHmis\mpr} \tag{\ensuremath{A_{1}}}\\
	&+ 2\ip{\Cpo\iA\Cpo\iHmis\mpr, \iA\Cpo\Cpr^{-1}\mpr}			      \tag{\ensuremath{A_{2}^{1}}}\\
	&+ 2\ip{\Cpo\iA\Cpo\iHmis\mpr, b}\tag{$A_{2}^{2}$}\\
	&+ \ip{\Cpo\iA\Cpo\Cpr^{-1}\mpr, \iA\Cpo\Cpr^{-1}\mpr}			      \tag{\ensuremath{A_{3}^{1}}}\\
	&+ 2\ip{\Cpo\iA\Cpo\Cpr^{-1}\mpr, b}\tag{$A_{3}^{2}$}\\
	&+ \ip{\Cpo b, b}													  \tag{\ensuremath{A_{3}^{3}}}.
\end{align}
Using the identity $\Cpo^{-1} = \iHmis + \Cpr^{-1}$, it follows that
$$A_{2}^{2} + A_{3}^{2} = 2\ip{\Cpo\iA\Cpo(\iHmis + \Cpr^{-1})\mpr, b} = 2\ip{\Cpo\iA\mpr, b}.$$
Similarly, splitting $A_{2}^{1}$ and using that $\Cpo$ is selfadjoint,
\begin{align*}
	A_{1} + A_{2}^{1} &+ A_{3}^{1} = \paren{A_{1} + \half A_{2}		^{1}} + \paren{\half A_{2}^{1} + A_{3}^{1}}\\
	&= \ip{\Cpo\iA\Cpo\iHmis\mpr, \iA\Cpo(\iHmis + \Cpr^{-1})\mpr}
	+ \ip{\Cpo\iA\Cpo(\iHmis + \Cpr^{-1})\mpr, 					\iA\Cpo\Cpr^{-1}\mpr}\\
	&= \ip{\Cpo\iA\mpr, \iA\mpr}.
\end{align*}
Finally, we finish the simplification of the product terms by combining previous calculations with the remaining term $A_{3}^{3}$,
\begin{equation}\label{eq:Asum}
\begin{alignedat}{1}
	A_{1} + A_{2} + A_{3} &= \paren{A_{2}^{2} + A_{3}^{2}} + \paren{A_{1} + A_{2}^{1} + A_{3}^{1}} + A_3^3\\
	&= 2\ip{\Cpo\iA\mpr, b} 
		+ \ip{\Cpo\iA\mpr, \iA\mpr} 
		+ \ip{\Cpo b, b}\\
	&= \|\iA \mpr + b\|_{\Cpo}^{2}.
\end{alignedat}
\end{equation}

Lastly, we turn our attention to the trace terms. Combining the first two trace terms and manipulating, 
\begin{align*}
	B_{1} + B_{2} &= \tr\paren{(\iHmis\Cpr+\mc{I})					\iHmis\paren{\Cpo\iA}^{2}\Cpo}\\
	&= \tr\paren{\Cpo(\iHmis + \Cpr^{-1})\Cpr\iHmis\paren{\Cpo\iA}		^{2}}\\
	&= \tr\paren{\Cpr\iHmis\paren{\Cpo\iA}^{2}}.
\end{align*}
Adding in the remaining term $B_{3}$, the sum of the trace terms is computed to be
\begin{equation}\label{eq:Bsum}
\begin{alignedat}{1}
	\paren{B_{1} + B_{2}} + B_{3} &= \tr\paren{\paren{\Cpr\iHmis 	+ \mc{I}-\half\mc{I}}\paren{\Cpo\iA}^{2}}\\
	&= \tr\paren{\Cpr\paren{\iHmis + \Cpr^{-1}}\paren{\Cpo\iA}		^{2} - \half\paren{\Cpo\iA}^{2}}\\
	&= \tr\paren{\Cpr\iA\Cpo\iA}-\half\tr\big((\Cpo\iA)		^{2}\big).
\end{alignedat}
\end{equation}

Summing \eqref{eq:Asum} and \eqref{eq:Bsum}, we obtain 
\[
\begin{aligned}
\Psi &= \paren{A_{1} + A_{2} + A_{3}} + \paren{B_{1} + B_{2} + B_{3}} \\
	 &= \|\iA \mpr + b\|_{\Cpo}^{2} + \tr\paren{\Cpr\iA\Cpo\iA}-\half\tr\big((\Cpo\iA)^{2}\big).
\end{aligned}
\]
Finally, note that~\eqref{eq:psi_full} follows from the definitions of $\iA$ 
and $b$, given in \eqref{eq:Abc}.
\qed
\end{proof}

\section{Proof of Proposition~\ref{prp:rPsi}}
\label{appdx:spectral}

\begin{proof}

Proving the proposition is equivalent to manipulating the trace terms in~\eqref{eq:rPsi}. Recall that $\st_r = \mat I - \sum_{i=1}^r \gamma_i (\vec v_i \otimes \vec v_i)$, with $\{(\gamma_i, \vec v_i)\}_{i=1}^r$, as in 
\eqref{eq:St_lowrank}. We then claim that
\begin{align}
	\tr\paren{\Gpr\ddq\Gpo\ddq} &\approx \tr(\ddqt^2) - \sum_{i=1}^r \gamma_i \| \ddqt \vec v_i \|^2,
	\label{eq:T1_approx}\\
	\tr\paren{\paren{\Gpo\ddq}^{2}} &\approx 	
	\tr(\ddqt^2) - 2\sum_{i=1}^r \gamma_i \| \ddqt \vec v_i \|^2 + \sum_{i,j=1}^r 
	\gamma_i \gamma_j \ip{\ddqt \vec v_i, \vec v_j}^2.
	\label{eq:T2_approx}
\end{align}
This result follows from repeated applications of the cyclic property of the trace. We show the 
proof of the second equality and omit the first one for brevity. Using the definition of $\st_r$,
\[
\begin{aligned}
	\tr\paren{\paren{\rGpo\ddq}^{2}} = \tr\paren{\st_r \ddqt \st_r\ddqt}
	&= \tr\paren{ 
		(\mat I - \sum_i^{r} \gamma_i \vec v_i \otimes \vec v_i)\ddqt(\mat I - \sum_j^{r} \gamma_j \vec v_j \otimes \vec v_j) \ddq 
		}	
		\\
	&= \tr\paren{ 
		\big(\ddqt^2 - \sum_i^{r} \gamma_i (\ddqt \vec v_i \otimes \ddqt \vec v_i)\big) \big(\mat I - \sum_j^{r} \gamma_j \vec v_j \otimes \vec v_j\big) 
		}	
		\\
	&= \tr\paren{\ddqt^2} - 2 \sum_{i=1}^r \gamma_i \tr\paren{ \ddqt \vec v_i \otimes \ddqt \vec v_i} 
		+ \sum_{i,j=1}^r \gamma_i \gamma_j \tr\paren{ (\ddqt \vec v_i \otimes \ddqt \vec v_i) (\vec v_j \otimes \vec v_j)}
		\\ 
	&= \tr\paren{\ddqt^2} - 2 \sum_{i=1}^r \gamma_i \|\ddqt \vec v_i\|^2 
		+ \sum_{i,j=1}^r \gamma_i \gamma_j  \ip{\ddqt \vec v_i, \vec v_j}^2.
\end{aligned}	
\]
Here, we have also used the facts $\tr( \vec u \otimes \vec v) = \ipM{\vec u, \vec v}$ and 
$\tr( (\vec s \otimes \vec t) (\vec u \otimes \vec v)) = \ipM{\vec s, \vec v}\ipM{\vec t, \vec u}$, for 
$\vec s, \vec t, \vec u$, and $\vec v$ in $\mR^{N}_{\mat M}$. Substituting \eqref{eq:T1_approx} and \eqref{eq:T2_approx} into \eqref{eq:rPsi}, we arrive at the desired representation of $\PsiSpec$.
\qed
\end{proof}

\section{Proof of Proposition~\ref{proposition:low_rank}}\label{sec:proof_of_prp_lowrank}
\begin{proof}
	We begin by proving (a). 
    Considering $\fPtw = (\mat{I} + \fFt^* \Ws \fFt)^{-1}$, we note that
    \[
       \mat{I} + \fFt^* \Ws \fFt = \mat{I} + \mat{M}^{-1} \fFt^\top\Ws\fFt 
       = \mat{M}^{-1}(\mat{M} + \fFtw^\top\fFtw).   
    \]
    Thus, $\fPtw = (\mat{M} + \fFtw^\top\fFtw)^{-1}\mat{M}$, 
    and the Sherman--Morrison--Woodbury identity provides that
    \[
        (\mat{M} + \fFtw^\top\fFtw)^{-1} = \mat{M}^{-1} - \mat{M}^{-1} \fFtw^\top 
        (\mat{I} + \fFtw \mat{M}^{-1} \fFtw^\top)^{-1}\fFtw\mat{M}^{-1}.
    \]
    Therefore, 
    \[
       \begin{aligned}
       (\mat{I} + \fFt^* \Ws \fFt)^{-1} 
       &= 
       \mat{I} - \mat{M}^{-1}\fFtw^\top(\mat{I} + \fFtw \mat{M}^{-1} \fFtw^\top)^{-1}\fFtw\\
       &= 
       \mat{I} - \fFtw^*(\mat{I} + \fFtw \fFtw^*)^{-1}\fFtw\\
       &=
       \mat{I} - \fFtw^*\fDw \fFtw.
       \end{aligned}
    \]
    Parts (b) and (c) follow from some algebraic manipulations and using the identity for $\fPtw$.
\qed
\end{proof}

\section{Gradient and Hessian of goal as in Section~\ref{sec:nonlinear_model_and_goal}}
\label{appdx:derivatives}
We obtain the adjoint-based expressions for the 
gradient and Hessian of $\Q$ following a formal Lagrange approach. 
This is accomplished by forming weak representations of
the inversion model \eqref{eq:model2} and prediction model \eqref{eq:prediction} 
and formulating  
a Lagrangian functional $\iL$ constraining
the goal to these forms.
In what
follows, we denote
\[
\begin{aligned}
\ms{V}^{p} 		&\defeq \crbra{p \in H^{1}(\Omega): \ p\big\vert_{E_{0}^{p}} = 0, p\big\vert_{E_{1}^{p}} = 1}, \\
\ms{V}_{0}^{p}  &\defeq \crbra{p \in H^{1}(\Omega): \ p\big\vert_{E_{0}^{p}\cup E_{1}^{p}}=0},\\
\ms{V}^{c} 		&\defeq \crbra{c \in H^{1}(\Omega): \ c\big\vert_{E_{0}^{c}\cup E_{1}^{c}}=0}.
\end{aligned}
\]
We next discuss the weak formulations of the inversion and prediction models.
The weak form of the inversion model is
\[
	\text{find} \ p \in \ms{V}^p \ \text{such that} \ \ip{\kap \grad p, \grad \lam} = \ip{m, \lam}, \quad \text{for all } \ \lam \in \ms{V}_{0}^{p}.
\]
Similarly, 
the weak formulation of the prediction model is
$$\text{find} \ c \in \ms{V}^{c} \ \text{such that} \ \alp\ip{\grad c, \grad\zet} + \ip{c\kap\grad p, \grad\zet} = \ip{f, \zet}, \ \text{for all} \ \zet \in \ms{V}^{c}.$$
We constrain the goal-functional to these weak forms, arriving at the Lagrangian
\begin{equation}\label{eq:L1}
\iL(c, p, m, \lam, \zet) = \ip{\ind, c} + \ip{\kap \grad p, \grad \lam} - \ip{m, \lam} + \alp\ip{\grad c, \grad\zet} + \ip{c\kap\grad p, \grad\zet} - \ip{f, \zet}.
\end{equation}
Here, $\lam$ and $\zet$ are the Lagrange multipliers. This Lagrangian 
facilitates computing the derivative of the goal functional with 
respect to the inversion parameter $m$. 

\boldheading{Gradient}
The gradient expression is derived using the formal Lagrange 
approach~\cite{Troltzsch10}. 
Namely, the Gâteaux derivative of $\Q$ at $m$, and in a direction $\tilde{m}$, 
satisfies 
\begin{equation}\label{eq:gradient}
\iL_{m}[\tilde{m}] = -\ip{\lam, \tilde{m}} \equiv \ip{\grad\Q(m), \tilde{m}},
\end{equation}
provided the variations of the Lagrangian with respect to the remaining arguments 
vanish. Here $\iL_{m}[\tilde{m}]$ is shorthand for  
\[
\iL_{m}[\tilde{m}] := \frac{d}{d\eta}\bigg|_{\eta=0}
\iL(c, p, m + \eta \,\tilde{m}, \lam, \zet).
\]
Thus, an evaluation of the gradient requires solving the following system,
\begin{equation}\label{eq:system1}
\iL_{\lam}[{\tilde{\lam}}] = 0, \ \iL_{\zet}[\tilde{\zet}] = 0, \ \iL_{c}[\tilde{c}] = 0, \ \text{and} \ \iL_{p}[\tilde{p}] = 0,
\end{equation}
along all test functions $\tilde{\lam}, \tilde{\zet}, \tilde{c}, \tilde{p}$ 
in the respective test function spaces. 
The equations are solved in the order presented in \eqref{eq:system1}. 
It can be shown that the weak form of the inversion model is equivalent to
$\iL_{\lam}[{\tilde{\lam}}] = 0$, similarly the prediction model weak form is
equivalent to $\iL_{\zet}[\tilde{\zet}]=0$. These are referred to as the
\emph{state equations}. We refer to $\iL_{c}[\tilde{c}]=0$ and
$\iL_{p}[\tilde{p}]=0$ as the \emph{adjoint equations}. The variations required
to form the gradient system are
\begin{equation*}
\begin{alignedat}{1}
\iL_{\lam}[\tilde{\lam}] &= \ip{\kap \grad p, \grad \tilde{\lam}} - \ip{m, \tilde{\lam}},\\
\iL_{\zet}[\tilde{\zet}] &= \alp\ip{\grad c, \grad\tilde{\zet}} + \ip{c\kap\grad p, \grad\tilde{\zet}} - \ip{f, \tilde{\zet}},\\
\end{alignedat}
\quad
\begin{alignedat}{1}
\iL_{c}[\tilde{c}] &= \ip{\ind, \tilde{c}} + \alp\ip{\grad\zet,\grad\tilde{c}} + \ip{\kap\grad\zet\cdot\grad p, \tilde{c}},\\
\iL_{p}[\tilde{p}] &= \ip{\kap\grad\lam, \grad\tilde{p}} + \ip{\kap c\grad\zet, \grad\tilde{p}}.
\end{alignedat}
\end{equation*}

\boldheading{Hessian} 
We compute the action of the Hessian using a formal Lagrange approach as well. 
This is facilitated by formulating a \emph{meta-Lagrangian} functional; 
for a discussion of this approach, see, e.g.,~\cite{VillaPetraGhattas21}.
The meta-Lagrangian is
\begin{equation}\label{eq:L2}
\begin{alignedat}{1}
\iL^{H}(c, p, m, \lam, \zet, \hat{c}, \hat{p}, \hat{\lam}, \hat{\zet}, \hat{m}) &= -\ip{\lam,\hat{m}}\\
&+ \ip{\kap\grad p, \grad\hat{\lam}}-\ip{m,\hat{\lam}}\\
&+ \alp\ip{\grad c, \grad\hat{\zet}} + \ip{c\kap\grad p, \grad\hat{\zet}} - \ip{f, \hat{\zet}}\\
&+ \ip{\ind,\hat{c}} + \alp\ip{\grad\zet,\grad\hat{c}}+\ip{\kap\grad\zet\cdot\grad p,\hat{c}}\\
&+ \ip{\kap\grad\lam, \grad\hat{p}} + \ip{\kap c\grad\zet,\grad\hat{p}},
\end{alignedat}
\end{equation}
where 
$\hat{p} \in \ms{V}_{0}^{p}, \hat{\lam} \in \ms{V}_{0}^{p}, 
\hat{c} \in \ms{V}^{c}$, and
$\hat{\zet} \in \ms{V}^{c}$ are additional Lagrange multipliers. 
Equating variations of $\iL^{H}$ with respect to $\hat{c}, \ \hat{p}, \ \hat{\lam}, \ \hat{\zet}$, and $\hat{m}$ to zero returns 
the gradient system. To apply the Hessian of $\Q$ at $m \in \ms{M}$ to $\hat{m} \in \ms{M}$ (in the $\tilde{m}$ direction), 
we must solve the gradient system \eqref{eq:system1}, then the additional system
\begin{equation}\label{eq:system2}
\begin{alignedat}{1}
\iL^{H}_{\lam}[{\tilde{\lam}}] = 0,\ \iL^{H}_{\zet}[\tilde{\zet}] = 0,\ \iL^{H}_{c}[\tilde{c}] = 0,\ \iL^{H}_{p}[\tilde{p}] &= 0,
\end{alignedat}
\end{equation}
for all test functions $\tilde{\lam}$, $\tilde{\zet}$, $\tilde{c}$, and $\tilde{p}$. 
The equations $\iL^{H}_{\lam}[{\tilde{\lam}}] = 0$ and $\iL^{H}_{\zet}[\tilde{\zet}] = 0$ are referred to as 
the \emph{incremental state} equations. We call the equations 
$\iL^{H}_{c}[\tilde{c}] = 0$ and $\iL^{H}_{p}[\tilde{p}] = 0$ the \emph{incremental adjoint} equations. 
For readers' convenience, we provide the required variational derivative for 
forming the incremental equations:
\begin{equation*}
\begin{alignedat}{1}
\iL^{H}_{\lam}[\tilde{\lam}] &= -\ip{\hat{m}, \tilde{\lam}} + \ip{\kap\grad\hat{p}, \tilde{\lam}},\\
\iL^{H}_{\zet}[\tilde{\zet}] &= \alp\ip{\grad\hat{c},\grad\tilde{\zet}} + \ip{\hat{c}\kap\grad p, \grad\tilde{\zet}} + \ip{c\kap\grad\hat{p}, \grad\tilde{\zet}},\\
\end{alignedat}
\quad
\begin{alignedat}{1}
\iL^{H}_{c}[\tilde{c}] &= \alp\ip{\grad\hat{\zet},\grad\tilde{c}} + \ip{\kap \grad\hat{\zet} \cdot \grad p, \tilde{c}} + \ip{\kap\grad \zet \cdot \grad \hat{p}, \tilde{c}},\\
\iL^{H}_{p}[\tilde{p}] &= \ip{\kap\grad\hat{\lam},\grad\tilde{p}} + \ip{c\kap\grad \hat{\zet}, \grad \tilde{p}} + \ip{\hat{c}\kap\grad \zet, \grad\tilde{p}}.
\end{alignedat}
\end{equation*}
The variation of $\iL^{H}$ with respect to $m$ reveals a means to 
compute the Hessian vector product, $\grad^{2}\Q(m)\hat{m}$, as follows.
\begin{equation}\label{eq:hessian}
\iL^{H}_{m}[\tilde{m}] = -\ip{\hat{\lam},\tilde{m}} = \ip{\grad^{2}\Q(m)\hat{m}, \tilde{m}}.
\end{equation}

\end{document}